\documentclass[a4paper,10pt]{amsart}

\usepackage{algorithm}
\usepackage{algorithmic}
\usepackage[foot]{amsaddr}
\usepackage{amsfonts,amssymb,amsthm,amsrefs} 
\usepackage{enumitem}
\usepackage[left=2.5cm, right=2.5cm]{geometry}
\usepackage{graphicx}
\usepackage{mathtools}
\usepackage[dvipsnames]{xcolor}
\usepackage{microtype} 
\usepackage{subcaption}

\usepackage[T1]{fontenc}
\usepackage[libertinus]{newtx} 
\usepackage{PTSans} 

\newtheorem{theorem}{Theorem}

\newtheorem{remark}{Remark}
\newtheorem{lemma}{Lemma}
\newtheorem{definition}{Definition}

\usepackage[colorlinks=true, citecolor=violet, urlcolor=blue, linkcolor=teal]{hyperref}

\DeclareMathOperator{\diag}{\mathrm{diag}\,}
\DeclareMathOperator{\proj}{\mathrm{Proj}\,}
\newcommand{\abs}[1]{|#1|\left.\vphantom{T_{j_0}^0}\!\!\right.}
\newcommand{\norm}[2]{\|#1\|\left.\vphantom{T_{j_0}^0}\!\!\right._{#2}}
\newcommand{\braket}[2]{\left\langle \begin{gathered} #1 \end{gathered} \,\middle|\, \begin{gathered} #2 \end{gathered} \right\rangle}
\newcommand{\conj}[1]{\overline{#1}}

\newcommand{\ket}[1]{\left\lvert #1 \right\rangle}
\newcommand{\intinterval}[2]{\left[#1:#2\right]} 
\newcommand{\ii}{\mathrm{i}} 

\renewcommand{\Re}{\mathrm{Re}}
\renewcommand{\Im}{\mathrm{Im}}

\newlength{\widthfigure} 
\newcommand{\folderImages}{.} 
\newcommand{\currentImgsLargeDt}{} 
\newcommand{\currentImgsSmallDt}{} 

\title[Numerical Bogoliubov approximation of bosonic many-body quantum dynamics]{Numerical Bogoliubov approximation of\\ bosonic many-body quantum dynamics}

\author{Yoann Le Hénaff}
\email{yoann.le-henaff@na.uni-tuebingen.de}
\author{Christian Lubich}
\author{Peter Pickl}
\address{Mathematisches Institut, University of Tübingen}

\date{}

\begin{document}

\begin{abstract}
    We propose an algorithm for the numerical solution of the time-dependent Schrödinger equation for \( N \gg 1 \) bosons interacting via a two-body interaction potential in the mean-field scaling regime. 
    Unlike existing numerical algorithms for quantum dynamics, the proposed algorithm has a computational complexity that is independent of the number \( N \) of particles.
    It is based on an ansatz for the wave function inspired by the Bogoliubov theory, which is an asymptotic theory for large \( N \). 
    
    In a first part, we follow Bogoliubov theory in evolving the condensate function by the Hartree equation, but we 
    use the Dirac--Frenkel time-dependent variational principle to derive nonlinear equations of motion for the excitation functions in the Bogoliubov ansatz. By carefully exploiting the bosonic symmetry, the so obtained approximate wave functions are shown to have derivatives bounded independently of $N$ apart from a time-dependent global phase factor which rotates with a frequency proportional to \( N \).
    
    In the second part, we numerically approximate the wave function (modulo the phase) by a composite algorithm, which employs an appropriate splitting of the equations of motion and uses a dynamical low-rank approximation of the fundamental two-particle excitation function by linear combinations of products of single-particle functions. The low-rank excitation function is evolved in time by a basis update and Galerkin (BUG) integrator that is robust to small singular values of the coefficient matrix and allows for rank adaptivity. Numerical results are provided to assess the performance of the proposed algorithm.
\end{abstract}

\maketitle

\section{Introduction}

This paper is concerned with the approximate numerical solution of the time-dependent Schrödinger equation for a system of \( N \gg 1 \) bosonic particles: 
\begin{equation}
    \label{eqn: general schrodinger equation}
    \ii \partial_t \psi(t, x) = H\psi(t, x), \quad \psi(0, \cdot) = \psi_0,
\end{equation}
where \( \psi(t, \cdot) \in L^2(\Omega^N) \) for all \( t \geq 0 \) and \( \psi_0 \in L^2(\Omega^N) \).
The domain \( \Omega = \mathbb{R}^d \), \( d\geq 1 \), is the single-particle space.
The bosonic framework requires that for any time \( t\geq 0 \), the wave function \( \psi(t, x_1, \dots, x_N) \) is invariant under permutation of the space variables \( x_1, \dots, x_N \), i.e.
\begin{equation*}
    \psi(t, x_1, \dots, x_N) = \psi(t, x_{\sigma(1)}, \dots, x_{\sigma(N)})\qquad \forall \sigma \in S_N,
\end{equation*}
where \( S_N \) denotes the set of all permutations of \( \{1, \dots, N\} \).

The Hamiltonian \( H \) is given by
\begin{equation}
    \label{eqn: full schrodinger hamiltonian}
    H = \sum_{j=1}^N -\Delta_{x_j} + \frac{1}{N-1} \sum_{1\leq j < k \leq N} v(x_j - x_k),
\end{equation}
where \( -\Delta_{x_j} \) denotes the Laplacian operator taken with respect to variable \( x_j \) and \( v: \Omega\to \mathbb{R} \) is a real-valued interaction potential assumed to be even and bounded.
The factor \( (N-1)^{-1} \) in the potential operator characterizes the scaling we consider, called the {\it mean-field regime}.
We thus have weak, long-range interactions.
With minimal changes to the current paper, it would also be possible to include an external trapping potential \( U_{\text{trap}}: \Omega \to \mathbb{R} \).
For the sake of presentation, we consider \( U_{\text{trap}} = 0 \).

It is a well-known fact that the exact solution of \eqref{eqn: general schrodinger equation} cannot be obtained in general, and that various approximations are needed to understand the behaviour of \( \psi \).
One situation where approximations to many-body bosonic systems are needed is in studying the phenomenon of superfluidity; see e.g. \cites{bogolubovTheorySuperfluidity1947,leggettSuperfluidity1999,tilleySuperfluiditySuperconductivity2019}.

\subsection{Existing numerical schemes}
\label{subsection: existing schemes}

A widely used algorithm originally developed for the numerical simulation of quantum molecular dynamics of small molecules is the \emph{Multi-Configurational Time-Dependent Hartree} (MCTDH) method.
It has been introduced in \cite{meyerMulticonfigurationalTimedependentHartree1990} and has been further developed over the years;
see \cites{beckMulticonfigurationTimedependentHartree2000,meyerIntroductionMCTDH2018,lubichQuantumClassicalMolecular2008}.
The MCTDH ansatz is very general and can be applied to various systems of different nature.
When the MCTDH method is directly applied to an 
\( N \)-particle bosonic system,  the wave function is approximated as
\begin{equation*}
    u(t,x) = \sum_{j_1=1}^{r} \dots \sum_{j_N=1}^{r} C_{j_1, \dots, j_N}(t) \phi_{j_1}(t,x_1) \cdot \ldots \cdot\phi_{j_N}(t,x_N).
\end{equation*}
The value \( r \) is the chosen rank, and \( N \) is the total number of particles.
The symmetric core tensor \( C \in \mathbb{C}^{r^N}\), which is invariant under permutations of indices, and the evolving orbitals \( \phi_j(\cdot,t) \) \( (j=1,\dots,r) \) are evolved in time based on the Dirac--Frenkel time-dependent variational principle.
However, the computational cost is exponential with respect to the number of particles, growing as $O(r^N)$, and so it is not applicable to systems where \( N \) is large.

Regarding the simulation of a larger number of bosons, there has been numerical work \cites{alonMulticonfigurationalTimedependentHartree2008,lodeNumericallyExactQuantum2012,lodeMulticonfigurationalTimedependentHartree2016} in that direction in the framework of second quantization.
There an MCTDH-like method, called MCTDHB, is proposed and used to study symmetric configurations of \( r \) time-dependent orbitals \( \varphi_j(\cdot,t) \) for $j=1,\dots,r$: the ansatz is
\begin{equation}
    \label{eqn: mctdhb ansatz}
    u(t) = \sum_{\vec{n}} C_{\vec{n}}(t) \ket{\vec{n}; t},
\end{equation}
where \( \vec{n} = (n_1, \dots, n_r) \) and 
\begin{equation}
    \ket{\vec{n};t} = \frac{1}{\sqrt{n_1! \dots n_r!}}\, a_1^*(t) ^{n_1} \dots  a_r^*(t)^{n_r} \ket{0},
\end{equation}
where \( \ket{0} \) is the vacuum state.
The operator \( a_j^*(t) \) is the creation operator associated to the orbital \( \varphi_j(t) \).
In other words, being in the state \( \ket{(n_1, \dots, n_r); t} \) means that there are \( n_1 \) bosons in the orbital \( \varphi_1(t) \), \( n_2 \) bosons in the orbital \( \varphi_2(t) \), \dots, and the wave function is symmetrized.
The summation in \eqref{eqn: mctdhb ansatz} is over all \( \vec{n} \) such that \( n_1 + n_2 + \dots + n_r = N \), in order to ensure that the total number of bosons is preserved.
The second quantization formalism helps in reducing the computational cost, but it still depends on \( N \) polynomially of degree $r-1$, making it not suitable for many-body systems with large \( N \).

\medskip

It is thus of considerable interest, both in physics and numerical analysis, to develop a computational scheme that is able to deal with a large number \( N \) of particles.

\subsection{Elements of Bogoliubov theory and approximation ansatz}
\label{section: elements of bogoliubov theory}

The study of the phenomenon of superfluidity gave rise to the Bogoliubov theory via the seminal paper \cite{bogolubovTheorySuperfluidity1947}.
Bogoliubov's ideas have then been expanded, see for example the recent works \cites{bossmannHigherOrderCorrections2020,bossmannBogoliubovDynamics2021,lewinFluctuationsHartreeStates2015,lewinBogoliubovSpectrumInteracting2015} and references therein, which describe the dynamics of quantum fluctuations around a Bose--Einstein condensate.
The aim of this subsection is not to give full details of the Bogoliubov theory, but simply to present some known results and to deduce an approximation ansatz for our numerical scheme.

The main result we shall use from the Bogoliubov theory is about the approximation to the wave function \( \psi \);   
see e.g. \cites{rodnianskiQuantumFluctuationsRate2009,grillakisSecondOrderCorrectionsMean2010,grillakisSecondorderCorrectionsMean2011,spohnKineticEquationsHamiltonian1980,lewinFluctuationsHartreeStates2015,napiorkowskiDynamicsInteractingBosons2023} and references therein: in many different variants, it is shown that as \( N\to\infty \), 
\begin{equation}
    \label{eqn: intro - bogoliubov approximation}
    \psi(t) \approx \sum_{j=0}^N \varphi(t)^{\otimes N-j} \otimes_s \widetilde{\chi}_j(t),
\end{equation}
where \( \widetilde{\chi}_j \in L^2_s(\Omega^j) \), and the \emph{condensate function} \( \varphi \) is the solution to the Hartree equation:
\begin{equation}
    \label{eqn: hartree equation}
    \ii \partial_{t} \varphi(t, x) = \left( -\Delta_x + v \ast \abs{\varphi(t, \cdot)}^2(x) - \mu_{\varphi(t)}\right) \varphi(t, x), \quad \varphi(0, \cdot) = \varphi_0, \quad \text{on } \Omega,
\end{equation}
with
\begin{equation}
    \label{eqn: mu in hartree equation}
    \mu_{\varphi(t)} := \frac{1}{2} \int_{\Omega} (v\ast \abs{\varphi(t, \cdot)}^2)(x) \abs{\varphi(t,x)}^2 dx.
\end{equation}
The symbol \( \otimes_s \) in \eqref{eqn: intro - bogoliubov approximation} denotes a symmetrization of the usual tensor product, \( L^2_s \) is the space of symmetric \( L^2 \) functions, and the notation \( f\ast g \) denotes the usual convolution \( (f\ast g)(x) = \int_{\Omega} f(x-y)g(y)dy \), for functions \( f,g \) such that the integral makes sense.
Precise definitions are given later in Section \ref{section: notations}.
The approximation \eqref{eqn: intro - bogoliubov approximation} can be understood as correction terms around the Bose--Einstein condensate \( \varphi \otimes \dots \otimes \varphi \).

\begin{remark}
    We follow \cites{bossmannBogoliubovDynamics2021,mitrouskasBogoliubovCorrectionsTrace2019,lewinFluctuationsHartreeStates2015,bossmannHigherOrderCorrections2020,namBogoliubovCorrectionMeanfield2017} for the normalization of the Hartree equation \eqref{eqn: hartree equation} using the coefficient \( \mu_{\varphi(t)} \) given in \eqref{eqn: mu in hartree equation}.
    The convenience of this coefficient will be clarified later in Section \ref{section: equations of motion - study of RHS}.
\end{remark}

The functions \( \widetilde{\chi}_j \) have been studied in detail -- for example in \cites{lewinBogoliubovSpectrumInteracting2015,namBogoliubovCorrectionMeanfield2017,namNoteValidityBogoliubov2017,bossmannBogoliubovDynamics2021,bossmannHigherOrderCorrections2020,schleinBogoliubovTheoryManybody2021,namBogoliubovExcitationSpectrum2023} -- but we shall only use a few essential results about them.
First, the marginals of the functions \( \widetilde{\chi}_j \) are orthogonal to the state \( \varphi \):
\begin{equation}
    \label{eqn: bogoliubov orthogonality relation}
    \int_{\Omega} \conj{\varphi(t, y)} \widetilde{\chi}_j(t, y, \cdot) \, dy \equiv 0,\ \forall\ j \geq 1,\ \forall t\geq 0.
\end{equation}
Second, it is also known that \( \norm{\widetilde{\chi}_{2j}}{2} \to 0 \) as \( j\to \infty \), and that \( \norm{\widetilde{\chi}_{2j+1}}{2} \approx 0 \) for \( j\geq 0 \) (with an equality as \( N\to \infty \)).
Third, the functions \( \widetilde{\chi}_{2j} \) for \( j > 1 \) are actually symmetric tensorizations of the function \( \widetilde{\chi}_2 \):
\begin{equation}
    \label{eqn: Bogoliubov tensorization}
    \widetilde{\chi}_{2j} = \sum_{\tau\in P_{2j}} \widetilde{\chi}_2^{\otimes j}(x_\tau),
\end{equation}
where \( P_{2j} = \left\{ \sigma \in S_{2j} : \sigma(2k-1) \leq \min(\sigma(2k), \sigma(2k+1)) \text{ for all } 1 \leq k \leq j \right\} \), with \( S_{2j} \) the symmetric group on the set \( \{1, \dots, 2j\} \).
It will be more convenient to work with functions of unit norm, hence we introduce the functions \( \chi_{2j} \) defined as follows:
\begin{equation}
    \label{eqn: Bogoliubov tensorization normalized}
    \chi_{2j} := e^{-\ii \theta_{2j}} \frac{\widetilde{\chi}_{2j}}{\sqrt{\braket{\widetilde{\chi}_{2j}}{\widetilde{\chi}_{2j}}}},\quad j\ge 1.
\end{equation}
The angle \( \theta_{2j} \in \mathbb{R} \) is determined according to the condition \( \braket{\chi_{2j}}{\dot{\chi}_{2j}} = 0 \).
The functions \( \chi_{2j} \) are called \( 2j \)-\emph{particle excitations} to the condensate function \( \varphi \).
We then have the following ansatz for our numerical method, which works in first quantization:
\begin{equation}
    \label{eqn: final ansatz u}
    \psi(t,x) \approx u(t, x) = \sum_{j=0}^n c_{2j}(t) \left( \varphi^{\otimes N-2j} \otimes_s \chi_{2j} \right)(t, x),
\end{equation}
where \( n \ll N/2 \), and where \( c_{2j} \in \mathbb{C} \), \( j=0, \dots, n \).
It can be assumed that \( \chi_0 = 1 \) for all times.
From now on, we may omit in the notation the time dependence of all functions, and unless explicitly stated otherwise, all functions will be time-dependent.

To the authors' knowledge, no numerical method has taken advantage of the ansatz given by the Bogoliubov theory in the time-dependent setting.
The only numerical works we are aware of that use some elements of the Bogoliubov theory are \cites{lewinNumericalPerspectiveHartreeFockBogoliubov2014,linNumericalSolutionLarge2021}, which are interested in finding ground states of the Hartree-Fock-Bogoliubov model.
We aim at developing a numerical method suited for the time-dependent bosonic Schrödinger equation.

\begin{remark}
    \label{remark: scope and applicability of this work}
    We do not claim that for small or moderate values of \( N \), the method introduced in this work can be as accurate as other methods, e.g. MCTDHB.
    Its interest lies in the large \( N \) limit, where other methods fail in practice.
    The method introduced in this work has to be understood as a first-order approximation, where the zero-th order term is the Hartree product state \( \varphi^{\otimes N} \): both methods can be applied in practice to any number of bosons, and they assume a non-physical (almost) complete Bose--Einstein condensation.
    Our method outperforms the Hartree method because it can additionally capture pair-interactions.
\end{remark}

\subsection{Overview \& main ideas}
\label{section: overview}

The goal of this paper is to numerically approximate the solution \( \psi \) to the Schrödinger equation \eqref{eqn: general schrodinger equation} when \( N\gg 1 \), using the ansatz \eqref{eqn: final ansatz u} for the approximation $u$.
This naturally raises two questions: \emph{How to evolve \( u \) in continuous time?} and then \emph{How to update \( u \) efficiently over a discrete time step?}
Detailed answers to both questions will be given in dedicated sections, but here we give a short and gentle summary of the main ideas that need to be realized in sometimes lengthy and less gentle calculations.

\subsubsection{How to evolve \texorpdfstring{\( u \)}{u} in continuous time?}
Our starting point is the approximation ansatz \eqref{eqn: final ansatz u}, viz.,
\begin{equation*}
    u(t, x) = \sum_{k=0}^n c_{2k}(t) \left( \varphi^{\otimes N-2k} \otimes_s \chi_{2k} \right)(t, x).
\end{equation*}
The function \( u \) is uniquely determined by the coefficients \( c_{2k} \), by the condensate function \( \varphi \) which is the unique solution of the Hartree equation \eqref{eqn: hartree equation}, and by the excitation functions \( \chi_{2k} \), which for $k\geq 2$ are determined from $\chi_2$ via \eqref{eqn: Bogoliubov tensorization}-\eqref{eqn: Bogoliubov tensorization normalized}.
We call \( \mathbf{P} := (c_0, \dots, c_{2n}, \chi_2)  \) the  {\it parameters} of the approximation \( u \) and consider $u=u(\mathbf{P})$. Focusing on the parameters,
we reduce the problem to dealing with the scalar coefficients $c_{2k}$ and the two-particle excitation function $\chi_2$ instead of the full \( N \)-particle wave function \( u \).

The case \( n>1 \) is more technical than the case \( n=1 \), but both cases rely on the same essential ingredients. 
For this reason, we only present \( n=1 \) here in this ``gentle'' overview.
We apply the Dirac--Frenkel time-dependent variational principle (see e.g. \cites{kramerGeometryTimedependentVariational1980,lubichQuantumClassicalMolecular2008}) to obtain differential equations for the parameters \( \mathbf{P} \) such that \( u \) approximately solves the Schrödinger equation \eqref{eqn: general schrodinger equation}.
Since the condensate function \( \varphi \) is determined by the Hartree equation \eqref{eqn: hartree equation}, the Dirac--Frenkel time-dependent variational principle is here applied on a time-dependent manifold that depends on \( \varphi(t) \). In the particular case where $\varphi$ is a stationary point of the Hartree equation, the approximation manifold is time-independent, which ensures energy conservation in the variational approximation.
We will obtain the {\it equations of motion} of the parameters in Theorem~\ref{theorem: equations of motion}:
\begin{equation}
    \label{eom-1}
    \begin{aligned}
        \dot{c}_{2k}(t) &= \braket{(\varphi^{\otimes N-2k} \otimes_s \chi_{2k})(t, x_1, \dots, x_N)}{-\ii Hu(t, x_1, \dots, x_N)}_{x_1, \dots, x_N} \\
        &\qquad - c_{2k} (N-2k) \braket{\varphi(t, x_1)}{\dot{\varphi}(t, x_1)}_{x_1},\quad k = 0, 1, \\
        \dot{\chi}_2(t, x_1, x_2) &= \frac{1}{c_2(t)} \sqrt{\binom{N}{2}} \ 
        \Pi_{\varphi(t)} \!
        \braket{\varphi^{\otimes N-2}(t, x_3, \dots, x_N)}{-\ii Hu(t, x_1, \dots, x_N)}_{x_3, \dots, x_N} \\
            &\qquad - \varphi(t, x_1) \braket{\dot{\varphi}(t, x_3)}{\chi_2(t, x_3, x_2)}_{x_3} - \braket{\dot{\varphi}(t, x_3)}{\chi_2(t, x_3, x_1)}_{x_3} \varphi(t, x_2),\quad x_1, x_2 \in \Omega.
    \end{aligned}
\end{equation}
In this work, the dot notation \( \dot{f} \) will be used to denote the differentiation with respect to time of a function \( f \).
In brakets of the form \( \braket{\dots}{\dots}_x \), the subscript \( {}_x \)  means that the braket is computed along the variable \( x \), and that the result is a function of any other variable.
The term ``braket'' can be replaced throughout this work by ``\( L^2(\Omega) \) inner product, anti-linear with respect to its first argument and linear with respect to its second argument''.
As an example, 
\begin{equation*}
    \braket{\dot{\varphi}(t, x_3)}{\chi_2(t, x_3, x_1)}_{x_3} = \int_{\Omega} \conj{\dot{\varphi}(t, x_3)} \chi_2(t, x_3, x_1) dx_3
\end{equation*}
is a function of the variables \( t \) and \( x_1 \).
As one can see, the notation is cumbersome and we will introduce lighter notation in Section \ref{section: notations}.
The projection $\Pi_{\varphi(t)}$, which appears in the equation for \( \dot{\chi}_2 \), originates from gauge conditions imposed in the Dirac--Frenkel variational principle and is not explicitly written out here.

At first sight, the right-hand sides of the equations of motion seem to be of size \( \mathcal{O}(N) \).
However, we will show the following results (Theorems \ref{theorem: setting n>1 no tensorization - chi2dot uniformly bounded wrt N} and \ref{theorem: unitary evolution coeffs}):
\begin{itemize}
    \item The equations of motion for the coefficients \( c_0, c_2 \) can be written in the form
        \begin{equation*}
            \begin{pmatrix}
                \dot{c}_0\\ \dot{c}_2
            \end{pmatrix}(t)
            = -\ii \check{\mathbf{H}}(t)
            \begin{pmatrix}
                c_0\\ c_2
            \end{pmatrix}(t),
        \end{equation*}
        where \( \check{\mathbf{H}}(t) \) is a Hermitian matrix with slowly varying components that are uniformly bounded with respect to \( N \) for all times.
        This shows that \( c(t) \) has a smooth, unitary evolution, and in particular that the norm of \( u \) is preserved in time since \( \norm{u}{2}^2 = \abs{c_0}^2 + \abs{c_2}^2 \).
    
    \item The right-hand side in the equation of motion for \( \chi_2 \) is uniformly bounded with respect to \( N \).
        This comes from the orthogonal projection \( \Pi_\varphi\), which makes \( \Pi_\varphi\! \braket{\varphi^{\otimes N-2}(t, x_3, \dots, x_N)}{-\ii Hu(t, x_1, \dots, x_N)}_{x_3, \dots, x_N} \) of size \( \mathcal{O}(1/N) \), as will be shown by a combinatorial argument.
\end{itemize}
Ultimately, at the end of Section \ref{section: equations of motion}, we will have obtained the time-evolution of the parameters \( \mathbf{P} \) of \( u \) via the Dirac--Frenkel time-dependent variational principle for arbitrary $n\ge 1$ in the ansatz \eqref{eqn: final ansatz u}.
The parameters \( \mathbf{P} \)  satisfy equations of motion that generalize those given for $n=1$ in \eqref{eom-1}, and a careful study of the right-hand sides of these equations shows that the parameters are slowly varying uniformly with respect to~\( N \).

\subsubsection{How to update \texorpdfstring{\( u \)}{u} efficiently over a discrete time step?}
For actual computations, we need to devise an efficient discretization of the equations of motion. This will be done with the principal aim that \textit{the computational effort is uniformly bounded in the number $N$ of particles.}.

Some hurdles to this aim and further challenging aspects come to mind immediately:
\begin{enumerate}[label=(\alph*)]
    \item The equations of motion require the computation of brakets along \( N \)  or \( N-2 \) variables. 
        Even if we are able in practice to compute efficiently a braket along one variable, having \( N \) of them appears challenging as \( N\to \infty \).
    \item The Hamiltonian operator \( H \) given in \eqref{eqn: full schrodinger hamiltonian} is actually a sum of \( \mathcal{O}(N^2) \) operators.
    \item The ansatz \eqref{eqn: final ansatz u} of \( u \) involves symmetric tensorizations of \( N \) variables, i.e. all permutations of \( N \) variables, hence a total number of \( N! \) tensor products that we have to deal with.
    \item The kinetic operator in \( H \) is the possible source of a stepsize restriction that we want to avoid while still using an explicit time integrator.
   \item \( \chi_2 \) is a 2-particle function, hence computationally more demanding than the 1-particle function \( \varphi \).
\end{enumerate}

Some hurdles can be easily dealt with using already existing techniques, some require involved (though conceptually ``simple'') calculations.
We now present a summary of the solutions used to overcome each of the above hurdles. For brevity we write \( \mathcal{O}(1) \) for any quantity that is uniformly bounded with respect to \( N \).
\begin{enumerate}[label=(\alph*)]
    \item The orthogonality of \( \varphi \) against the marginals of \( \chi_2 \) and the fact that \( \norm{\varphi}{2} = 1 \) allow us to reduce the brakets involving \( \mathcal{O}(N) \) variables to brakets involving only \( \mathcal{O}(1) \) variables.
    \item The symmetry of the functions involved in the brakets of the equations of motion allow us to reduce the number of operators from \( \mathcal{O}(N^2) \) to only \( \mathcal{O}(1) \).
    \item The orthogonality and the symmetries in the system allow us to get rid of most permutations in \( u \), leaving us only \( \mathcal{O}(1) \) permutations to deal with.
    \item An operator splitting is performed, to allow us to deal with the kinetic operator in Fourier space and to numerically integrate the equations of motion for the potential operator.
    The equations of motion will be the same as before, where \( H \) is replaced by \( V \) and the Hartree equation by its potential part.
    \item The 2-particle function \( \chi_2 \) is approximated by a low-rank linear combination of tensor products of 1-particle functions. For the kinetic operator, the time integration is then done by 1-particle Fast Fourier Transform. For the potential operator, the low-rank time integration uses a rank-adaptive BUG integrator as developed in \cites{cerutiRankadaptiveRobustIntegrator2022,cerutiTimeIntegrationSymmetric2020}.
\end{enumerate}

\subsection{Outline}
The rest of the paper is organized as follows.
Some notation is given in Section \ref{section: notations}, which ends the introduction.
The equations of motion are derived and studied in Section \ref{section: equations of motion}.
Section \ref{section: computational complexity equations of motion} is dedicated to reducing the computational complexity for the calculation of the right-hand side of the equations of motion and to make it uniformly bounded with respect to \( N \).
The numerical method is given in Section \ref{section: numerical method}, and is obtained mostly from existing techniques and approximations.
Finally, some numerical examples are given in Section \ref{section: numerical experiments} to showcase the satisfying conservation properties and to study the predictions of Bogoliubov theory in situations with \( 1 \ll N < \infty \).

\subsection{Notation}
\label{section: notations}

In this section, \( i \leq j \leq k \leq l \) denote arbitrary non-negative integers.
\begin{itemize}[leftmargin=*]
    \item All position variables are \( d \)-dimensional, and for \( N \) particles we write \( x = (x_1, \dots, x_N) \) where \( x_j \in \Omega := \mathbb{R}^d \).
    
    \item For a function \( f \) that is differentiable with respect to time \( t \), we use the dot notation as a shorthand: \( \dot{f} := \partial_{t} f \).
    
    \item Bra-ket notation: we consider brakets that are anti-linear in their first argument, and linear in their second argument: for \( f,g\in L^2(\Omega^i) \),
    \begin{equation}
        \braket{f}{g} := \int_{\Omega^i} \conj{f(x)} g(x) dx.
    \end{equation}
    The term ``braket'' can be replaced throughout this work by ``\( L^2(\Omega) \) inner product, anti-linear with respect to its first argument and linear with respect to its second argument''.

    \item When \( f\in L^2(\Omega^i) \) and \( g\in L^2(\Omega^j) \) with \( i \leq j \), we may need to compute the inner product between \( f \) and some marginals of \( g \). 
    In this case, we write
    \begin{equation}
        \braket{f(y)}{g(y, \cdot)}_{y} := \int_{\Omega^i} \conj{f(y)} g(y, \cdot) dy 
        \quad \text{ and }
        \braket{f(y)}{g(\cdot, y)}_{y} := \int_{\Omega^i} \conj{f(y)} g(\cdot, y) dy.
    \end{equation}
    Note the difference: the first \( i \) marginals of \( g \) are used in the first braket, while in the second braket it is the last \( i \) marginals of \( g \).
    If \( g \) is symmetric we may use the shorthand
    \begin{equation*}
        \braket{f}{g}_y := \braket{f(y)}{g(y, \cdot)}_y.
    \end{equation*}
    We use the subscript \( \mathbf{1}_y \) to indicate that a braket along the variable \( y \) is equal to one.
    For example, let \( f, g \in L^2(\Omega) \) with \( \braket{f}{f}=1 \), then
    \begin{equation*}
        \braket{f\otimes g}{f \otimes f} = \braket{f\otimes g}{f \otimes f}_{\mathbf{1}_{x_1}},
    \end{equation*}
    because we know that \( \braket{f}{f} = 1 \).
    The computation of the left braket along variables \( x_1, x_2 \) is then reduced to a computation on \( x_2 \) only.

    \item \( L_s^2(\Omega^k) \) is the subspace of \( L^2(\Omega^k) \) consisting of functions that are symmetric under permutations of all of the coordinates, i.e.
    \begin{equation}
        \label{eqn: definition symmetric L2 space}
        L_s^2(\Omega^k) = \bigotimes_{\mathrm{sym}}^k L^2(\Omega).
    \end{equation}
    The inherited $L^2$-norm on $L_s^2(\Omega^k)$ will be simply denoted by $\norm{\cdot}{}$.

    \item For \( i<j \) we write \( \intinterval{i}{j} = (i, i+1, \dots, j) \) as the interval of all integers between \( i \) and \( j \).

    \item We denote by \( S_N \) the set of permutations of \( \intinterval{1}{N} \).
    For \( j\leq N \), we denote by \( S_N^j \) the set of permutations of \( j \) elements chosen among \( \intinterval{1}{N} \). 
    If \( \mu \) is a permutation consisting of \( i \leq N-j \) elements, we write \( S_{\mu} \) the set of permutations of \( \{ \mu(1), \dots, \mu(i) \} \), and use the shorthand \( S^j_{N\setminus \mu} \) to denote the set of permutations of \( j \) elements chosen among the integer set \( \intinterval{1}{N} \setminus \{ \mu(1), \dots, \mu(i) \} \). 

    \item For \( x\in \Omega^l \) and \( \sigma \in S^k_l \), we write \( \sigma(\intinterval{i}{j}) = (\sigma(i), \sigma(i+1), \dots, \sigma(j-1), \sigma(j)) \) and
    \begin{equation*}
        x_{\sigma(\intinterval{i}{j})} := \left(x_{\sigma(i)}, x_{\sigma(i+1)}, \dots, x_{\sigma(j-1)}, x_{\sigma(j)} \right).
    \end{equation*}
    If \( i = 1 \) and \( j=k=l \), we write
    \begin{equation*}
        x_\sigma := (x_{\sigma(1)}, \dots, x_{\sigma(l)}).
    \end{equation*}

    \item For \( x\in \Omega^j \) and \( 1\leq i \leq j \), we write \( x\setminus x_i := \left( x_1, \dots, x_{i-1}, x_{i+1}, \dots, x_j \right) \in \Omega^{j-1} \).
    More generally, for \( i \leq j \) and \( \sigma \in S^i_j \), we write \( x\setminus x_\sigma := ((x\setminus x_{\sigma(1)}) \setminus \dots )\setminus x_{\sigma(i)} \).

    \item For two permutations \( \tau, \sigma \), we write \( \tau(\intinterval{i}{j}) =_s \sigma(\intinterval{i}{j}) \) if the sets \( \{\tau(i), \dots, \tau(j) \} \) and \( \{\sigma(i), \dots, \sigma(j) \} \) are equal, i.e. if the multi-indices \( \tau(\intinterval{i}{j}) \) and \( \sigma(\intinterval{i}{j}) \) are equal up to a permutation.

    \item Let \( f_1, \dots, f_j \in L^2(\Omega) \), we denote the orthogonal complement to the family \( \{f_1, \dots, f_j\} \) in \( L^2(\Omega) \) by
    \begin{equation*}
        \{f_1, \dots, f_j\}^\perp := \left\{ g\in L^2(\Omega) : \braket{g}{f_i} = 0,\, i=1, \dots, j \right\}.
    \end{equation*}
    We define \( \proj_{\text{Span}\{f_1, \dots, f_j\}} \) as the orthogonal projection onto \( \text{Span}\{f_1, \dots, f_j\} \).

    \item Let \( f \in L^2(\Omega^i) \), \( g\in L^2(\Omega^j) \), \( x\in \Omega^j \).
    Let \( \tau \in S^i_j \), we use the shorthand
    \begin{equation*}
        \braket{f}{g(x)}_{x_\tau} := \braket{f(x_\tau)}{g(x)}_{x_\tau} = \int_{\Omega^i} \conj{f(x_{\tau(1)}, \dots, x_{\tau(i)})} g(x_1, \dots, x_j) dx_{\tau(1)} \dots dx_{\tau(i)}.
    \end{equation*}

    \item Symmetric tensor product: for \( f \in L^2(\Omega^i) \) and \( g\in L^2(\Omega^j) \)
    \begin{equation}
        \label{eqn: definition symmetric tensor product}
        (f \otimes_s g)(x_1, \dots, x_{i+j})
        = \frac{1}{\mathsf{K}_i^{i+j}} \sum_{\sigma \in S_{i+j}} (f\otimes g)(x_{\sigma})
        = \frac{1}{\mathsf{K}_i^{i+j}} \sum_{\sigma \in S_{i+j}} f(x_{\sigma(1)}, \dots, x_{\sigma(i)}) g(x_{\sigma(i+1)}, \dots, x_{\sigma(i+j)}),
    \end{equation}
    where \( \mathsf{K}_i^{i+j} = \sqrt{i!j!(i+j)!} = \mathsf{K}_j^{i+j} \).

    \item Let \( f=f(t,x) \) a function that depends on time \( t \), we use the ``dot'' notation to denote a time derivative:
    \begin{equation*}
        \dot{f}(t,x) = \partial_{t} f(t,x).
    \end{equation*}

    \item For any matrix \( M \in \mathbb{C}^{i\times j} \), we denote \( M^\top \) the matrix transpose and \( M^* \) the matrix conjugate transpose:
    \begin{equation*}
        M^\top_{k,l} = M_{l,k} ,\quad
        M^*_{k,l} = \conj{M_{l,k}}.
    \end{equation*}
\end{itemize}

\section{Equations of motion}
\label{section: equations of motion}

In this section we derive equations of motion for the parameters \( (c_0, c_2, \dots, c_{2n}, \chi_2, \chi_4, \dots, \chi_{2n}) \) involved in the ansatz \eqref{eqn: final ansatz u}.
Together with \( \varphi \) solution to \eqref{eqn: hartree equation}, they define the approximate solution \( u \) to the Schrödinger equation \eqref{eqn: general schrodinger equation}.
In Section \ref{section: equations of motion - study of RHS}, the equations of motion are shown to be well-behaved uniformly with respect to \( N \).
We do not yet include the tensorization \eqref{eqn: Bogoliubov tensorization}-\eqref{eqn: Bogoliubov tensorization normalized} into our ansatz, and postpone its discussion to Section \ref{section: DFVP and tensorization}.

\subsection{Derivation}
\label{section: equations of motion - derivation}

Consider in this section the ansatz \eqref{eqn: final ansatz u} with \( n\geq 1 \), for an arbitrary continuously differentiable path of 1-particle functions $\varphi(t)\in L^2(\Omega)$. (Later \( \varphi \) will be chosen as the solution to the Hartree equation \eqref{eqn: hartree equation}, but this is irrelevant for the considerations that follow next.)
We do not assume in this section the tensorization \eqref{eqn: Bogoliubov tensorization}-\eqref{eqn: Bogoliubov tensorization normalized}.
The approximate solution \( u \) reads, omitting the omnipresent argument $t$ in the notation,
\begin{equation}
    \label{eqn: equations of motion - ansatz u}
    u = \sum_{j=0}^n c_{2j} \varphi^{\otimes N-2j} \otimes_s \chi_{2j}
\end{equation}
with coefficients \( c_0, c_2, \dots, c_{2n} \in \mathbb{C} \) and excitation functions \( \chi_{2j} \in L^2_s(\Omega^{2j}) \), \( j=1, \dots, n \), that satisfy the orthogonality \eqref{eqn: bogoliubov orthogonality relation} and are of unit norm.
Since we do not assume the tensorization \eqref{eqn: Bogoliubov tensorization}-\eqref{eqn: Bogoliubov tensorization normalized}, an equation of motion needs to be obtained for every excitation function \( \chi_{2j} \).

We assume that \( c_2, c_4, \dots, c_{2n} \neq 0 \), and determine the time evolution of the unknowns \( c_{2j}, \chi_{2j} \), \( 0 \leq j \leq n \), via the Dirac--Frenkel time-dependent variational principle.
We start by describing all functions of the form \eqref{eqn: equations of motion - ansatz u} as elements of the manifold 
\begin{equation*}
    \mathcal{M}_\varphi = \left\{ \sum_{k=0}^n c_{2k} \varphi^{\otimes N-2k} \otimes_s \chi_{2k} \,\middle|\, c_{2k} \in \mathbb{C}^*,\, \chi_{2k} \in L^2_s(\Omega^{2k}),\, \braket{\varphi(y)}{\chi_{2k}(y,\cdot)}_y = 0,\, \norm{\chi_{2k}}{L^2} = 1\text{ for } k=0, \dots, n \right\},
\end{equation*}
which becomes time-dependent via the time-dependence of the function \( \varphi \).
We refer the reader to \cites{lubichVariationalApproximationsQuantum2004} for details about the Dirac--Frenkel time-dependent variational principle with a time-dependent manifold.
Since the ansatz \eqref{eqn: equations of motion - ansatz u} is imposed for all times, the approximate solution \( u(t) \) is an element of \( \mathcal{M}_{\varphi(t)} \) at all times $t$.
We are now looking for an approximate solution to \( \ii \dot{u} = H u \), as a differentiable path on the time-dependent manifold:
\begin{equation*}
    t\mapsto u(t) = \sum_{k=0}^n c_{2k}(t) \varphi(t)^{\otimes N-2k} \otimes_s \chi_{2k}(t) \in \mathcal{M}_{\varphi(t)}.
\end{equation*}
For any such path, we have (omitting again the ubiquitous argument $t$):
\begin{equation*}
    \dot{u} = \sum_{k=0}^n \left( \dot{c}_{2k} \varphi^{\otimes N-2k} \otimes_s \chi_{2k} + c_{2k} \sqrt{N-2k} (\varphi^{\otimes N-2k-1} \otimes_s \dot{\varphi}) \otimes_s \chi_{2k} + c_{2k} \varphi^{\otimes N-2k} \otimes_s \dot{\chi}_{2k} \right),
\end{equation*}
with \( \Re\braket{\chi_{2k}}{\dot{\chi}_{2k}} = 0 \) due to \( \norm{\chi_{2k}}{2} = 1 \) for all times, and \( \braket{\dot{\varphi}(y)}{\chi_{2k}(y, \cdot)}_y + \braket{\varphi(y)}{\dot{\chi}_{2k}(y, \cdot)}_y = 0 \) due to \( \braket{\varphi(y)}{\chi_{2k}(y, \cdot)}_y = 0 \) for all times.
We have used the associativity of the symmetric tensor product (Lemma \ref{lemma: associativity symmetric tensor product} in Appendix \ref{appendix: technical lemma}), and Lemma \ref{lemma: Derivative of k-fold tensor product} in Appendix \ref{appendix: technical lemma} for computing the time derivative of \( \varphi^{\otimes N-2k} \).
The second constraint on \( \dot{\chi}_{2k} \) implies that
\begin{equation*}
    \dot{\chi}_{2k} = \kappa_{2k} - \sqrt{2k} \varphi \otimes_s \braket{\dot{\varphi}(y)}{\chi_{2k}(y, \cdot)}_y,
\end{equation*}
with \( \kappa_{2k} \in L^2_s(\Omega^{2k}) \) and \( \braket{\varphi(y)}{\kappa_{2k}(y, \cdot)}_y = 0 \).
Without loss of generality, we can impose the gauge condition \( \braket{\chi_{2k}}{\dot\chi_{2k}} = 0 \), which implies \( \braket{\chi_{2k}}{\kappa_{2k}} = 0 \).
Thus,
\begin{multline}
    \dot{u} = \sum_{k=0}^n \left( \dot{c}_{2k} \varphi^{\otimes N-2k} \otimes_s \chi_{2k} + c_{2k} \varphi^{\otimes N-2k} \otimes_s \kappa_{2k} \right) \\
    + \dot{\varphi} \otimes_s \left(\sum_{k=0}^n c_{2k} \sqrt{N-2k} \varphi^{\otimes N-2k-1} \otimes_s \chi_{2k} \right) - \sum_{k=1}^n c_{2k} \sqrt{2k(N-2k+1)} \varphi^{\otimes N-2k+1} \otimes_s \braket{\dot{\varphi}(y)}{\chi_{2k}(y, \cdot)}_y.
\end{multline}
The second line is obtained using Lemmas \ref{lemma: associativity symmetric tensor product} and \ref{lemma: symmetric tensor product of a function and itself} (also in Appendix \ref{appendix: technical lemma}).
The terms on the upper line of the right-hand side are in the tangent space $\mathcal{T}_u \mathcal{M}_\varphi$,
whereas the terms in the lower line only originate from the motion of $\varphi$.
We abbreviate the latter terms as $\Gamma_{\varphi,\dot\varphi}(u)$,
\begin{equation*}
    \Gamma_{\varphi, \dot{\varphi}}(u) = \dot{\varphi} \otimes_s \left(\sum_{k=0}^n c_{2k} \sqrt{N-2k} \varphi^{\otimes N-2k-1} \otimes_s \chi_{2k} \right) - \sum_{k=1}^n c_{2k} \sqrt{2k(N-2k+1)} \varphi^{\otimes N-2k+1} \otimes_s \braket{\dot{\varphi}(y)}{\chi_{2k}(y, \cdot)}_y.
\end{equation*}
The motion of $\varphi$ thus affects the motion of $u$ through the term $\Gamma_{\varphi, \dot{\varphi}}(u)$.
Note that the operator \( \Gamma_{\varphi,\dot\varphi} \) is well-defined on \( \mathcal{M}_\varphi \), since the knowledge of \( u \in \mathcal{M}_\varphi \) and \( \varphi \) determines uniquely the coefficient \( c_0 \) and the products \( c_{2k} \chi_{2k} \) (but not the coefficients \( c_{2k} \) nor the functions \( \chi_{2k} \) separately).
The functions \( \chi_{2k} \) and the coefficients \( c_{2k} \) only appear in \( \Gamma_{\varphi,\dot\varphi}(u) \) as the product \( c_{2k} \chi_{2k} \), making \( \Gamma_{\varphi,\dot\varphi}(u) \) uniquely determined given \( u \in \mathcal{M}_\varphi \) and \( \varphi \).
We now have
\begin{equation}
    \label{eqn: equations of motion - time derivative u}
    \dot{u} = \sum_{k=0}^n \left( \dot{c}_{2k} \varphi^{\otimes N-2k} \otimes_s \chi_{2k} + c_{2k} \varphi^{\otimes N-2k} \otimes_s \kappa_{2k} \right) + \Gamma_{\varphi, \dot{\varphi}}(u),
\end{equation}
where \( \dot{c}_{2k} \in \mathbb{C} \), and the functions \( \kappa_{2k} \in L^2_s(\Omega^{2k}) \) are such that \( \braket{\varphi(y)}{\kappa_{2k}(y, \cdot)}_y = 0 \) and \( \braket{\chi_{2k}}{\kappa_{2k}} = 0 \).
For $u\in \mathcal{M}_\varphi$ we define a subspace of symmetric \( 2k \)-particle functions that contains $\kappa_{2k}$,
\begin{equation*}
    \mathcal{N}^{(2k)}_\varphi(u) := \left\{ f \in L^2_s(\Omega^{2k}) : \braket{\chi_{2k}}{f} = 0,\ \braket{\varphi(y)}{f(y, \cdot)}_y = 0 \right\}
    = \{\chi_{2k}\}^\perp \cap \bigotimes_{\mathrm{sym}}^{2k} \{\varphi\}^\perp.
\end{equation*}
where \( \bigotimes_{\mathrm{sym}}^{2k} \{\varphi\}^\perp \) is shorthand for \( L^2_s(\Omega^{2k}) \cap \bigtimes_{i=1}^{2k} \{\varphi\}^\perp \), and \( f \in \bigtimes_{i=1}^{2k} \{\varphi\}^\perp \) means that the marginals of \( f \) are orthogonal to \( \varphi \).
Conversely, for any $t_0$ and $\kappa_{2k}\in \mathcal{N}^{(2k)}_\varphi(u_0)$ with $u_0\in\mathcal{M}_{\varphi(t_0)}$, a path $t\mapsto u(t)\in \mathcal{M}_{\varphi(t_0)}$ can be constructed such that $\dot u(t_0)$ is given by \eqref{eqn: equations of motion - time derivative u} without the term $\Gamma_{\varphi,\dot\varphi}(u)$.
The tangent space \( \mathcal{T}_u \mathcal{M}_\varphi \) at \( u\in \mathcal{M}_\varphi \) is given by:
\begin{equation}
    \label{eqn: equations of motion - tangent space}
    \mathcal{T}_u \mathcal{M}_\varphi 
    = \left\{ \sum_{k=0}^n \left(\delta c_{2k} \varphi^{\otimes N-2k} \otimes_s \chi_{2k} + c_{2k} \varphi^{\otimes N-2k} \otimes_s \delta\chi_{2k} \right) \,\middle|\, 
        \delta c_{2k} \in \mathbb{C}, \ \delta\chi_{2k} \in \mathcal{N}^{(2k)}_\varphi(u),\ k=0, \dots, n
    \right\}.
\end{equation}
Any function \( \delta u \in \mathcal{T}_u \mathcal{M}_\varphi \) determines uniquely its parameters \( \delta c_{2k} \) and \( \delta\chi_{2k} \).
It can be seen in \eqref{eqn: equations of motion - time derivative u} that the time derivative of any path \( t\mapsto u(t) \in \mathcal{M}_{\varphi(t)} \) is such that \( \dot{u} = \Theta + \Gamma_{\varphi,\dot\varphi}(u) \) for some \( \Theta \in \mathcal{T}_u \mathcal{M}_\varphi \).

The Dirac--Frenkel time-dependent variational principle determines the time derivative \( \dot{u} \) from the condition
\begin{equation}
    \label{eqn: equations of motion - variational principle}
    \norm{\ii \dot u - Hu}{L^2}\ \text{ is minimal subject to }\
    \dot{u} = \Theta + \Gamma_{\varphi,\dot\varphi}(u) \ \text{ for some }\ \Theta \in \mathcal{T}_u \mathcal{M}_\varphi.
\end{equation}
The minimization problem is solved by an orthogonal projection onto \( \mathcal{T}_u \mathcal{M}_\varphi \):
\begin{equation}
    \label{eqn: equations of motion - dirac frenkel variational principle}
    \text{Find } \Theta \in \mathcal{T}_u \mathcal{M}_\varphi \ \text{ such that }\
    \braket{\delta u}{\ii\Theta - \left( H u - \ii\Gamma_{\varphi,\dot\varphi}(u) \right)} = 0,\quad\ \forall \delta u\in \mathcal{T}_u \mathcal{M}_\varphi.
\end{equation}
We prove the following result.
\begin{theorem}
    \label{theorem: equations of motion}
    Let \( \varphi \in C^1(\mathbb{R}_+; L^2(\Omega)) \), \( c_0 \in \mathbb{C} \), \( c_{2k} \in \mathbb{C}^* \) and \( \chi_{2k} \in L^2_s(\Omega^{2k}) \) for \( 1 \leq k \leq n \).
    At \( u \in \mathcal{M}_\varphi \) given by \eqref{eqn: equations of motion - ansatz u}, the Dirac--Frenkel time-dependent variational principle \eqref{eqn: equations of motion - variational principle} yields
    \begin{equation}
        \label{eqn: equations of motion}
        \left\{ \ \ 
            \begin{aligned}
                \dot{c}_{2k} &= \braket{\varphi^{\otimes N-2k} \otimes_s \chi_{2k}}{-\ii H u} - c_{2k} (N-2k) \braket{\varphi}{\dot{\varphi}},\\
                \dot{\chi}_{2k} &= \frac{1}{c_{2k}} \sqrt{\binom{N}{2k}}\, \proj_{\!\mathcal{N}^{(2k)}_\varphi(u)} \braket{\varphi^{\otimes N-2k}}{-\ii H u}_{x_{\intinterval{2k+1}{N}}} - \sqrt{2k}\, \varphi \otimes_s \braket{\dot{\varphi}(y)}{\chi_{2k}(y, \cdot)}_y
                \ \ \text{ on }\ \Omega^{2k}, 
            \end{aligned}
        \right.\quad 0 \leq k \leq n,
    \end{equation}
    where $\proj_{\!\mathcal{N}^{(2k)}_\varphi(u)}$ is the orthogonal projection from $L^2_s(\Omega^{2k})$ onto $\mathcal{N}^{(2k)}_\varphi(u)$.
\end{theorem}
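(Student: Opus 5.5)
We test the Galerkin condition \eqref{eqn: equations of motion - dirac frenkel variational principle} separately against the two families of tangent directions in \eqref{eqn: equations of motion - tangent space}. We take $\delta u = \delta c\, \varphi^{\otimes N-2k}\otimes_s \chi_{2k}$ for the coefficient equations, and $\delta u = c_{2k}\varphi^{\otimes N-2k}\otimes_s \delta\chi_{2k}$ with $\delta\chi_{2k}\in\mathcal{N}^{(2k)}_\varphi(u)$ for the excitation equations. We write $\Theta=\sum_j(\dot c_{2j}\varphi^{\otimes N-2j}\otimes_s\chi_{2j}+c_{2j}\varphi^{\otimes N-2j}\otimes_s\kappa_{2j})$. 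The relation $\dot u = \Theta+\Gamma_{\varphi,\dot\varphi}(u)$ then turns the condition into $\braket{\delta u}{\dot u + \ii Hu}=0$, i.e.\ $\braket{\delta u}{\Theta}=\braket{\delta u}{-\ii Hu}-\braket{\delta u}{\Gamma_{\varphi,\dot\varphi}(u)}$.

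\textbf{Orthogonality relations.} The first step is a set of inner-product identities for the building blocks. Using $\norm{\varphi}{}=1$ and the orthogonality of all marginals of $\chi_{2j}$, $\kappa_{2j}$ to $\varphi$, we expand the symmetric tensor products via \eqref{eqn: definition symmetric tensor product}. A permutation pairing a $\varphi$-slot with a slot of $\chi_{2j}$ gives zero. This yields
\begin{equation*}
\braket{\varphi^{\otimes N-2k}\otimes_s f}{\varphi^{\otimes N-2j}\otimes_s g}=\delta_{jk}\braket{f}{g}
\end{equation*}
for $f,g$ in $\bigotimes_{\mathrm{sym}}^{2k}\{\varphi\}^\perp$. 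The normalization $\mathsf{K}$ is chosen so that the constant is exactly one; checking this is a counting argument. With the gauge $\braket{\chi_{2k}}{\kappa_{2k}}=0$, testing $\Theta$ against $\varphi^{\otimes N-2k}\otimes_s\chi_{2k}$ gives $\dot c_{2k}$. Testing against $\varphi^{\otimes N-2k}\otimes_s\delta\chi_{2k}$ gives $c_{2k}\braket{\delta\chi_{2k}}{\kappa_{2k}}$.

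\textbf{The $\Gamma$ term.} This is the main obstacle. We decompose $\dot\varphi=\braket{\varphi}{\dot\varphi}\varphi+\dot\varphi_\perp$. The component along $\varphi$ in the first part of $\Gamma$ reproduces $\sum_k c_{2k}(N-2k)\braket{\varphi}{\dot\varphi}\varphi^{\otimes N-2k}\otimes_s\chi_{2k}$, using Lemma~\ref{lemma: symmetric tensor product of a function and itself}. It therefore contributes $c_{2k}(N-2k)\braket{\varphi}{\dot\varphi}$ to the $k$-th coefficient equation. The $\dot\varphi_\perp$ part has the form $\varphi^{\otimes N-2k-1}\otimes_s(\dot\varphi_\perp\otimes_s\chi_{2k})$, which lives in the sector with $2k+1$ non-condensate particles. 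It is orthogonal to all test functions, which have an even number $2k$ of $\perp$-slots. The second part of $\Gamma$ lies in the odd sector with $2k-1$ non-condensate particles, hence is orthogonal as well. The delicate point is making the "sector" orthogonality rigorous. We will do this by a Fock-type decomposition of $L^2_s(\Omega^N)$ according to the number of factors in $\{\varphi\}^\perp$, which the orthogonality identity above essentially provides. This gives the first line of \eqref{eqn: equations of motion}.

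\textbf{Excitation equation.} For the second line, the variational identity gives, for all $\delta\chi_{2k}\in\mathcal{N}^{(2k)}_\varphi(u)$,
\begin{equation*}
\conj{c_{2k}}\,c_{2k}\braket{\delta\chi_{2k}}{\kappa_{2k}}=\conj{c_{2k}}\braket{\varphi^{\otimes N-2k}\otimes_s\delta\chi_{2k}}{-\ii Hu},
\end{equation*}
since $\Gamma$ again drops out by the sector argument. Expanding the symmetrization, the right side equals $\conj{c_{2k}}\sqrt{\binom{N}{2k}}\,\braket{\delta\chi_{2k}}{\braket{\varphi^{\otimes N-2k}}{-\ii Hu}_{x_{\intinterval{2k+1}{N}}}}$, using the symmetry of $Hu$. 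The prefactor $\sqrt{\binom{N}{2k}}=\sqrt{N!/((2k)!(N-2k)!)}$ comes from the normalization in \eqref{eqn: definition symmetric tensor product}. Since $\kappa_{2k}\in\mathcal{N}^{(2k)}_\varphi(u)$, this identifies $\kappa_{2k}=c_{2k}^{-1}\sqrt{\binom{N}{2k}}\proj_{\mathcal{N}^{(2k)}_\varphi(u)}\braket{\varphi^{\otimes N-2k}}{-\ii Hu}_{x_{\intinterval{2k+1}{N}}}$. Substituting into $\dot\chi_{2k}=\kappa_{2k}-\sqrt{2k}\varphi\otimes_s\braket{\dot\varphi(y)}{\chi_{2k}(y,\cdot)}_y$ completes the proof.
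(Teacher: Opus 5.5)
Your proposal is correct and follows the paper's proof. Like the paper, you test the Galerkin condition against the two families of tangent directions, use the orthonormality of the blocks $\varphi^{\otimes N-2j}\otimes_s\chi_{2j}$, evaluate the $\Gamma_{\varphi,\dot\varphi}(u)$ contributions, and identify $\kappa_{2k}$ through $\proj_{\!\mathcal{N}^{(2k)}_\varphi(u)}$; the paper only calls the $\Gamma$ evaluation ``direct computations'', and your splitting $\dot\varphi=\braket{\varphi}{\dot\varphi}\varphi+\dot\varphi_\perp$ with the sector decomposition makes it explicit. One small correction: in the excitation equation, the term $c_{2k}(N-2k)\braket{\varphi}{\dot\varphi}\,\varphi^{\otimes N-2k}\otimes_s\chi_{2k}$ of $\Gamma_{\varphi,\dot\varphi}(u)$ lies in the same sector $2k$ as the test function, so the sector argument does not remove it; it vanishes because $\delta\chi_{2k}\perp\chi_{2k}$, which is the $\{\chi_{2k}\}^\perp$ part of the paper's projection.
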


\begin{proof}
    Let \( \Theta, \delta u \in \mathcal{T}_u \mathcal{M}_\varphi \), and write
    \begin{align*}
        \Theta &= \sum_{k=0}^n \left(\dot{c}_{2k} \varphi^{\otimes N-2k} \otimes_s \chi_{2k} + c_{2k} \varphi^{\otimes N-2k} \otimes_s \kappa_{2k} \right),
        &\delta u &= \sum_{k=0}^n \left( \delta c_{2k} \varphi^{\otimes N-2k} \otimes_s \chi_{2k} + c_{2k} \varphi^{\otimes N-2k} \otimes_s \delta\chi_{2k} \right),
    \end{align*}
    for some coefficients \( \dot{c}_{2k}, \delta c_{2k} \in \mathbb{C} \) and functions \( \delta \chi_{2k}, \kappa_{2k} \in \mathcal{N}^{(2k)}_\varphi(u) \), which are uniquely determined by \( \Theta \) and \( \delta u \).
    The variational principle \eqref{eqn: equations of motion - dirac frenkel variational principle} is now equivalent to
    \begin{equation}
        \label{eqn: n>1 - rewrite dirac frenkel variational principle}
        \left\{ \ \ 
            \begin{aligned}
                \braket{\delta c_{2k} \varphi^{\otimes N-2k} \otimes_s \chi_{2k}}{\Theta - \left( -\ii H u - \Gamma_{\varphi,\dot\varphi}(u) \right)} &= 0\quad\ \forall \delta c_{2k} \in \mathbb{C}, \\
                \braket{c_{2k} \varphi^{\otimes N-{2k}} \otimes_s \delta \chi_{2k}}{\Theta - \left( -\ii H u - \Gamma_{\varphi,\dot\varphi}(u) \right)} &= 0\quad\ \forall \delta\chi_{2k} \in \mathcal{N}^{(2k)}_\varphi(u),
            \end{aligned}
        \right.\quad 0\leq k \leq n.
    \end{equation}
    We use the definition of the symmetric tensor product, the orthogonality \eqref{eqn: bogoliubov orthogonality relation}, and the unit norm of \( \varphi \) and \( \chi_2 \) to obtain:
    \begin{equation*}
        \braket{\varphi^{\otimes N-2k}\otimes_s \chi_{2k}}{\varphi^{\otimes N-2j}\otimes_s \chi_{2j}} = \delta_{j,k},\quad \text{ and }\quad
        \braket{\varphi^{\otimes N-2k} \otimes_s \chi_{2k}}{\varphi^{\otimes N-2j} \otimes_s \delta\chi_{2j}} = 0,\quad 0 \leq j,k \leq n.
    \end{equation*}
    The system \eqref{eqn: n>1 - rewrite dirac frenkel variational principle} implies
    \begin{equation}
        \left\{ \ \ 
            \begin{aligned}
                \dot{c}_{2k} &= \braket{\varphi^{\otimes N-2k} \otimes_s \chi_{2k}}{-\ii H u - \Gamma_{\varphi,\dot\varphi}(u)}, \\
                c_{2k} \kappa_{2k} &= \sqrt{\binom{N}{2k}} \proj_{\mathcal{N}^{(2k)}_\varphi(u)} \braket{\varphi^{\otimes N-2k}}{-\ii H u - \Gamma_{\varphi,\dot\varphi}(u)}_{x_{\intinterval{2k+1}{N}}} \quad \text{ on } \Omega^{2k},
            \end{aligned}
        \right.\quad 0\leq k \leq n.
    \end{equation}
    Direct computations give
    \begin{equation*}
        \braket{\varphi^{\otimes N-2j} \otimes_s \chi_{2j}}{\Gamma_{\varphi, \dot{\varphi}}} = c_{2j} (N-2j) \braket{\varphi}{\dot{\varphi}}
        \quad\text{ and }\quad
        \proj_{\mathcal{N}^{(2k)}_\varphi}\braket{\varphi^{\otimes N-2k}}{\Gamma_{\varphi, \dot{\varphi}}}_{x_{\intinterval{2k+1}{N}}} = 0.
    \end{equation*}
    We finally get
    \begin{equation*}
        \left\{ \ \ 
            \begin{aligned}
                \dot{c}_{2k} &= \braket{\varphi^{\otimes N-2k} \otimes_s \chi_{2k}}{-\ii H u} -  c_{2k}(N-2k) \braket{\varphi}{\dot{\varphi}}, \\
                \kappa_{2k} &= \frac{1}{c_{2k}}  \sqrt{\binom{N}{2k}} \proj_{\mathcal{N}^{(2k)}_\varphi(u)} \braket{\varphi^{\otimes N-2k}}{-\ii H u}_{x_{\intinterval{2k+1}{N}}} \quad \text{ on } \Omega^{2k},
            \end{aligned}
        \right.\quad 0 \leq k \leq n,
    \end{equation*}
    and the result follows from \( \dot{\chi}_{2k} = \kappa_{2k} - \sqrt{2k} \varphi \otimes_s \braket{\dot{\varphi}}{\chi_{2k}(y, \cdot)}_y \).
\end{proof}

\subsection{Tensorization and variational principle, and a variational crime}
\label{section: DFVP and tensorization}

In the previous section, we did not impose the tensorization \eqref{eqn: Bogoliubov tensorization}-\eqref{eqn: Bogoliubov tensorization normalized}.
In this section, we explain why we omitted this condition in the Dirac--Frenkel time-dependent variational principle.
Essentially, the difficulties boil down to ``dealing with symmetries can be tricky''.
In order to illustrate this, we will try to obtain the equations of motion with \( n>1 \) and show where it fails.

Using the tensorization property \eqref{eqn: Bogoliubov tensorization}, the ansatz \eqref{eqn: final ansatz u} reads \( u = \sum_{k=0}^n c_{2k} \varphi^{\otimes N-2k} \otimes_s \chi_2^{\otimes k} \).
Up to a modification of the coefficients \( c \) that accounts for the normalization \eqref{eqn: Bogoliubov tensorization normalized}, this is exactly the same ansatz as \eqref{eqn: final ansatz u} and is easier to manipulate in this section.
To underline the difficulties and show how they stem from the tensor product \( \chi_2^{\otimes k} \), we simplify our situation: we assume in this section that the coefficients \( c \) and the condensate function \( \varphi \) are given and constant with respect to time.
We apply the Dirac--Frenkel time-dependent variational principle to the manifold
\begin{equation*}
    \mathcal{M}_{c, \varphi} = \left\{ \sum_{k=0}^n c_{2k} \varphi^{\otimes N-2k} \otimes_s \chi_2^{\otimes k} \,\middle|\,
        \chi_2 \in L^2_s(\Omega^2),\ \braket{\varphi(y)}{\chi_2(y, \cdot)}_y = 0,\ \norm{\chi_2}{2} = 1
    \right\}.
\end{equation*}
For a variation \( \delta\chi_2 \) of \( \chi_2 \), we obtain the variation of \( \chi_2^{\otimes k} \): \(\delta(\chi_2^{\otimes k}) = \sum_{l=1}^k \chi_2^{l-1} \otimes \delta\chi_2 \otimes \chi_2^{k-l} \).
By symmetry, we get the variation \( \delta(c_{2k} \varphi^{\otimes N-2k}\otimes_s \chi_2^{\otimes k}) = k c_{2k} \varphi^{\otimes N-2k} \otimes_s (\delta\chi_2 \otimes \chi_2^{\otimes k-1}) \).
The tangent space \( \mathcal{T}_u \mathcal{M}_{c,\varphi} \) at \( u\in \mathcal{M}_{c,\varphi} \) is thus given by
\begin{equation*}
    \mathcal{T}_u \mathcal{M}_{c,\varphi}
    = \left\{ \sum_{k=1}^n k c_{2k} \varphi^{\otimes N-2k} \otimes_s \left( \delta \chi_2 \otimes \chi_2^{\otimes k-1} \right) \,\middle|\,
        \delta\chi_2 \in L^2_s(\Omega^2), \ \braket{\varphi(y, \cdot)}{\delta\chi_2(y, \cdot)}_y = 0,\, \Re \braket{\chi_2}{\delta\chi_2} = 0
    \right\}.
\end{equation*}
The Dirac--Frenkel time-dependent variational principle now consists in looking for \( \Theta \in \mathcal{T}_u \mathcal{M}_{c,\varphi} \) such that 
\begin{equation*}
    \braket{\delta u}{\ii \Theta - Hu} = 0, \quad \forall \delta u \in \mathcal{T}_u \mathcal{M}_{c,\varphi}.
\end{equation*}
Let
\begin{equation*}
    \delta u = \sum_{k=1}^n k c_{2k} \varphi^{\otimes N-2k} \otimes_s \left( \delta \chi_2 \otimes \chi_2^{\otimes k-1} \right)
    \quad \text{and} \quad
    \Theta = \sum_{j=1}^n j c_{2j} \varphi^{\otimes N-2j} \otimes_s \left( \kappa_2 \otimes \chi_2^{\otimes j-1} \right),
\end{equation*}
for some functions \( \delta \chi_2, \kappa_2 \in L^2_s(\Omega^2) \) that are admissible variations for the tangent space \( \mathcal{T}_u \mathcal{M}_{c, \varphi} \).
The orthogonalities between \( \varphi \) and the marginals of the functions \( \chi_2, \delta \chi_2, \kappa_2 \) yield
\begin{align*}
    \braket{\delta u}{\Theta}
    &= \sum_{k=1}^n C_{N,k} k^2 \abs{c_{2k}}^2 \braket{\delta \chi_2 \otimes_s \chi_2^{\otimes k-1}}{\kappa_2 \otimes_s \chi_2^{\otimes k-1}},
\end{align*}
where the coefficient \( C_{N, k} \) is a combinatorial coefficient depending on \( N \) and \( k \).
Therefore, we have to compute the following sum:
\begin{align*}
    \braket{\delta u}{\Theta}
    &= \sum_{k=1}^n (2k)! C_{N,k} k^2 \abs{c_{2k}}^2 \braket{\delta \chi_2 \otimes \chi_2^{\otimes k-1}}{\kappa_2 \otimes_s \chi_2^{\otimes k-1}}.
\end{align*}
Coming back to the Dirac--Frenkel time-dependent variational principle, we obtain the following condition:
\begin{equation*}
    \braket{\delta\chi_2}{\sum_{k=0}^n (2k)! C_{N, k} k^2 \abs{c_{2k}}^2 \braket{\chi_2^{\otimes k-1}}{\kappa_2 \otimes_s \chi_2^{\otimes k-1}}_{x_{\intinterval{3}{2k}}} - F}_{x_1, x_2} = 0,
\end{equation*}
that must be satisfied for all variations \( \delta\chi_2 \in L^2_s(\Omega^2) \) that are admissible for the tangent space \( \mathcal{T}_u \mathcal{M}_{c,\varphi} \).
The function \( F \) is an explicitly computable function involving \( Hu \).
The innermost braket is a linear combination of terms of the form
\begin{equation*}
    \kappa_2,\quad \braket{\chi_2(y_1, \cdot) \chi_2(y_2, \cdot)}{\kappa_2(y_1, y_2)}_{y_1, y_2},\quad
    \braket{\chi_2(y_1, \cdot) \chi_2(y_2, \cdot) \chi_2(y_3, y_4)}{\kappa_2(y_1, y_3)\chi_2(y_2, y_4)}_{y_1, y_2, y_3, y_4},\quad \dots
\end{equation*}
The sum \( \sum_{k=0}^n (2k)! C_{N, k} k^2 \abs{c_{2k}}^2 \braket{\chi_2^{\otimes k-1}}{\kappa_2 \otimes_s \chi_2^{\otimes k-1}}_{x_{\intinterval{3}{2k}}} \) can ultimately be represented by a linear operator depending on \( \chi_2 \) and acting on \( \kappa_2 \).
In order to obtain equations of motion, this linear operator needs to be inverted and its inverse is non-trivial to obtain and can be arbitrarily ill-conditioned (if it exists at all). We conclude that the Dirac--Frenkel time-dependent variational principle is not applicable under the tensorization constraints \eqref{eqn: Bogoliubov tensorization}-\eqref{eqn: Bogoliubov tensorization normalized}.

\medskip
We therefore commit a variational crime: {\it We replace $\chi_4,\dots,\chi_{2n}$ appearing in $u$ on the right-hand side of the differential equation for $\chi_2$ by the corresponding tensorized functions \eqref{eqn: Bogoliubov tensorization}-\eqref{eqn: Bogoliubov tensorization normalized}. We thus end up with a system of differential equations for the coefficients $c_0,c_2,\dots,c_{2n}$ and the excitation function $\chi_2$  (as given in Theorem \ref{theorem: equations of motion}) and algebraic equations for $\chi_4,\dots,\chi_{2n}$ (as given by \eqref{eqn: Bogoliubov tensorization}-\eqref{eqn: Bogoliubov tensorization normalized}).}

We also need to specify the angle \( \theta_{2j} \) that appears in the definition \eqref{eqn: Bogoliubov tensorization normalized} of \( \chi_{2j} \).
We choose this angle such that \( \braket{\chi_{2j}}{\dot{\chi}_{2j}} = 0 \), so that the equations of motion for \( {c}_{2j} \) (which have been derived using this equality) remain valid.

\begin{lemma}
    \label{lemma: equations of motion angles}
    Assume \eqref{eqn: Bogoliubov tensorization}-\eqref{eqn: Bogoliubov tensorization normalized} hold.
    The angles \( \theta_{2j} \) that enforce the orthogonalities \( \braket{\chi_{2j}}{\dot{\chi}_{2j}} = 0 \) are solutions to the following differential equation:
    \begin{equation}
        \label{eqn: equations of motion angles}
        \dot{\theta}_{2j} = \frac{\Im \braket{\widetilde{\chi}_{2j}}{\dot{\widetilde{\chi}_{2j}}}}{\braket{\widetilde{\chi}_{2j}}{\widetilde{\chi}_{2j}}}
            = \frac{(2j)!}{2^j (j-1)!} \frac{\Im\braket{\widetilde{\chi}_{2j}}{\dot{\chi}_2 \otimes \chi_2^{\otimes j-1}}}{\braket{\widetilde{\chi}_{2j}}{\widetilde{\chi}_{2j}}}, \quad j\geq1.
    \end{equation}
\end{lemma}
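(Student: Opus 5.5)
Write \(\chi_{2j} = e^{-\ii\theta_{2j}} \widetilde{\chi}_{2j}/\rho_{2j}\) with \(\rho_{2j} := \braket{\widetilde{\chi}_{2j}}{\widetilde{\chi}_{2j}}^{1/2}>0\), and differentiate in time. This gives
\begin{equation*}
    \dot{\chi}_{2j} = e^{-\ii\theta_{2j}}\Bigl( -\ii\dot\theta_{2j}\frac{\widetilde{\chi}_{2j}}{\rho_{2j}} + \frac{\dot{\widetilde{\chi}}_{2j}}{\rho_{2j}} - \frac{\dot\rho_{2j}}{\rho_{2j}^2}\widetilde{\chi}_{2j}\Bigr).
\end{equation*}
Pairing with \(\chi_{2j}\) cancels the phases. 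Using \(\braket{\widetilde{\chi}_{2j}}{\widetilde{\chi}_{2j}}=\rho_{2j}^2\), we get
\begin{equation*}
    \braket{\chi_{2j}}{\dot\chi_{2j}} = -\ii\dot\theta_{2j} + \frac{\braket{\widetilde{\chi}_{2j}}{\dot{\widetilde{\chi}}_{2j}}}{\rho_{2j}^2} - \frac{\dot\rho_{2j}}{\rho_{2j}}.
\end{equation*}
Since \(2\rho_{2j}\dot\rho_{2j} = 2\Re\braket{\widetilde{\chi}_{2j}}{\dot{\widetilde{\chi}}_{2j}}\), the real part vanishes automatically. Setting the imaginary part to zero gives the first equality,
\begin{equation*}
    \dot\theta_{2j} = \frac{\Im\braket{\widetilde{\chi}_{2j}}{\dot{\widetilde{\chi}}_{2j}}}{\braket{\widetilde{\chi}_{2j}}{\widetilde{\chi}_{2j}}}.
\end{equation*}
The definition of \(\theta_{2j}\) as a solution of this ODE is then consistent. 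For \(j=1\), \(\widetilde\chi_2\) is the normalized \(\chi_2\) itself (gauge \(\braket{\chi_2}{\dot\chi_2}=0\) from Theorem~\ref{theorem: equations of motion}), so the formula is compatible; I would treat \(\widetilde\chi_2\) and \(\chi_2\) as identified there, or simply check that the right-hand side then equals \(\Im\braket{\chi_2}{\dot\chi_2}=0\).

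For the second equality, differentiate the tensorization \eqref{eqn: Bogoliubov tensorization}, \(\widetilde{\chi}_{2j} = \sum_{\tau\in P_{2j}} \widetilde{\chi}_2^{\otimes j}(x_\tau)\), with \(\widetilde\chi_2\) replaced by \(\chi_2\) up to a constant factor. By the product rule,
\begin{equation*}
    \dot{\widetilde{\chi}}_{2j} = \sum_{\tau\in P_{2j}} \sum_{l=1}^{j} \bigl(\chi_2^{\otimes l-1}\otimes\dot\chi_2\otimes\chi_2^{\otimes j-l}\bigr)(x_\tau).
\end{equation*}
Pair this with \(\widetilde{\chi}_{2j}\), which is symmetric in all \(2j\) variables. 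Each term \(\braket{\widetilde\chi_{2j}}{g(x_\tau)}\) equals \(\braket{\widetilde\chi_{2j}}{g}\) by a change of variables, and the \(j\) positions of \(\dot\chi_2\) can likewise be permuted to the first slot. This gives
\begin{equation*}
    \braket{\widetilde\chi_{2j}}{\dot{\widetilde\chi}_{2j}} = j\,\abs{P_{2j}}\,\braket{\widetilde\chi_{2j}}{\dot\chi_2\otimes\chi_2^{\otimes j-1}}.
\end{equation*}

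It remains to count. \(P_{2j}\) is meant to index pairings of \(2j\) points, so \(\abs{P_{2j}} = (2j-1)!! = \frac{(2j)!}{2^j j!}\), and \(j\cdot\frac{(2j)!}{2^j j!} = \frac{(2j)!}{2^j (j-1)!}\), which is exactly the stated constant. Taking imaginary parts then yields the claim.

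The main obstacle is this combinatorial step: the paper's literal definition of \(P_{2j}\) is awkward, so I will interpret it as the set of perfect matchings (equivalently, representatives of \(S_{2j}\) modulo ordering within and among pairs, using that \(\chi_2\) is symmetric). I will verify the cardinality \((2j)!/(2^j j!)\) by the standard orbit count: \(S_{2j}\) acts on ordered pair sequences with stabilizer of size \(2^j j!\). The symmetry-invariance of the inner product under relabeling is routine.
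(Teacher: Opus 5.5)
Your proof is correct and follows the paper's argument essentially step by step. You differentiate $\chi_{2j} = e^{-\ii\theta_{2j}}\widetilde{\chi}_{2j}/\|\widetilde{\chi}_{2j}\|$, note that $\braket{\chi_{2j}}{\dot{\chi}_{2j}}$ is purely imaginary because the real part cancels against the derivative of the norm, set the imaginary part to zero, then differentiate the tensorization and use the symmetry of $\widetilde{\chi}_{2j}$ together with $|P_{2j}| = (2j)!/(2^j j!)$; your reading of $P_{2j}$ as canonical representatives of perfect matchings is exactly what the paper's definition encodes.

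The only quibble is the phrase ``up to a constant factor''. The second equality holds as written only when $\widetilde{\chi}_{2j}$ is built from $\chi_2$ itself, which the paper implicitly assumes and your displayed formula for $\dot{\widetilde{\chi}}_{2j}$ already does. A factor $c \neq 1$ would put an extra $c^j$ in front of $\braket{\widetilde{\chi}_{2j}}{\dot{\chi}_2 \otimes \chi_2^{\otimes j-1}}$.
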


\begin{proof}
    The orthogonality \( \braket{\chi_2}{\dot{\chi}_2} = 0 \) is already satisfied by the equation of motion for \( \chi_2 \) given in \eqref{eqn: equations of motion}.
    For \( j>1 \), we have,
        \begin{equation*}
        \dot{\chi}_{2j}
        = -\ii \dot{\theta}_{2j} \chi_{2j} -\frac{e^{-\ii\theta_{2j}}}{2} \frac{\frac{\partial}{\partial t} \braket{\widetilde{\chi}_{2j}}{\widetilde{\chi}_{2j}}}{\braket{\widetilde{\chi}_{2j}}{\widetilde{\chi}_{2j}}^{3/2}} \widetilde{\chi}_{2j} + \frac{e^{-\ii\theta_{2j}} \dot{\widetilde{\chi}_{2j}}}{\sqrt{\braket{\widetilde{\chi}_{2j}}{\widetilde{\chi}_{2j}}}}
        = -\ii\dot{\theta}_{2j} \chi_{2j} + e^{-\ii\theta_{2j}} \left(\frac{\dot{\widetilde{\chi}_{2j}}}{\sqrt{\braket{\widetilde{\chi}_{2j}}{\widetilde{\chi}_{2j}}}} - \frac{\Re\braket{\widetilde{\chi}_{2j}}{\dot{\widetilde{\chi}_{2j}}}}{\braket{\widetilde{\chi}_{2j}}{\widetilde{\chi}_{2j}}^{3/2}} \widetilde{\chi}_{2j} \right),
    \end{equation*}
    and hence
    \begin{equation*}
        \braket{\chi_{2j}}{\dot{\chi}_{2j}}
        = -\ii \dot{\theta}_{2j} - \frac{\Re\braket{\widetilde{\chi}_{2j}}{\dot{\widetilde{\chi}_{2j}}}}{\braket{\widetilde{\chi}_{2j}}{\widetilde{\chi}_{2j}}} + \frac{\braket{\widetilde{\chi}_{2j}}{\dot{\widetilde{\chi}_{2j}}}}{\braket{\widetilde{\chi}_{2j}}{\widetilde{\chi}_{2j}}}
        = -\ii\dot{\theta}_{2j} + \ii \frac{\Im\braket{\widetilde{\chi}_{2j}}{\dot{\widetilde{\chi}_{2j}}}}{\braket{\widetilde{\chi}_{2j}}{\widetilde{\chi}_{2j}}} \in \ii \mathbb{R}.
    \end{equation*}
    The differential equation for \( \theta_{2j} \) is obtained by imposing the condition \( \braket{\chi_{2j}}{\dot{\chi}_{2j}} = 0 \).
    Now use \eqref{eqn: Bogoliubov tensorization}:
    \begin{equation*}
        \braket{\widetilde{\chi}_{2j}}{\dot{\widetilde{\chi}_{2j}}}
        = \sum_{\tau\in P_{2j}} \braket{\widetilde{\chi}_{2j}}{\sum_{k=1}^j (\chi_2^{\otimes k-1} \otimes \dot{\chi}_2 \otimes \chi_2^{\otimes j-k})(x_\tau)}
        = j \sum_{\tau\in P_{2j}} \braket{\widetilde{\chi}_{2j}}{\dot{\chi}_2 \otimes \chi_2^{\otimes j-1}},
    \end{equation*}
    where the last equality is obtained using the symmetry of \( \widetilde{\chi}_{2j} \), and we conclude using that \( \abs{P_{2j}} = \frac{(2j)!}{2^j j!} \).
\end{proof}

\subsection{Behaviour with respect to \texorpdfstring{\( N \)}{N}}
\label{section: equations of motion - study of RHS}

At first glance, the time derivatives of \( c_{2k} \) and \( \chi_{2k} \) given in \eqref{eqn: equations of motion} seem to become large when \( N \) grows large.
Fortunately, we can show that the derivatives \( \dot{\chi}_{2k} \) are uniformly bounded with respect to \( N \) (Theorem \ref{theorem: setting n>1 no tensorization - chi2dot uniformly bounded wrt N}) and that the vector of coefficients \( \mathbf{c}=(c_{2j})_{j=0}^n \) follows a smooth unitary evolution (Theorem \ref{theorem: unitary evolution coeffs}).

We start with the equations of motion for the excitation functions \( \chi_{2k} \) given in \eqref{eqn: equations of motion}, and show that they have bounded derivatives as \( N\to\infty \).
We have the following result:
\begin{theorem}
    \label{theorem: setting n>1 no tensorization - chi2dot uniformly bounded wrt N}
    Let \( c_0 \in \mathbb{C} \), \( c_2, \dots, c_{2n} \in \mathbb{C}^* \), \( \varphi, \phi \in L^2(\Omega) \), and \( \chi_{2k} \in L^2_s(\Omega^{2k}) \) such that
    \begin{equation*}
        \braket{\varphi(y)}{\chi_{2k}(y, \cdot)}_y = 0, \quad 1 \leq k \leq n.
    \end{equation*}
    Define \( u \) via \eqref{eqn: final ansatz u}, then for all \( 1 \leq k \leq n \), the quantity
    \begin{equation*}
        \frac{1}{c_{2k}} \sqrt{\binom{N}{2k}}\, \proj_{\!\mathcal{N}^{(2k)}_\varphi(u)} \braket{\varphi^{\otimes N-2k}}{-\ii H u}_{x_{\intinterval{2k+1}{N}}} - \sqrt{2k} \varphi \otimes_s \braket{\phi(y)}{\chi_{2k}(y, \cdot)}_y
    \end{equation*}
    is uniformly bounded with respect to \( N \).
\end{theorem}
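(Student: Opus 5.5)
The second term is harmless: \( \norm{\varphi \otimes_s \braket{\phi(y)}{\chi_{2k}(y,\cdot)}_y}{} \le C\norm{\phi}{}\norm{\chi_{2k}}{} \), independently of \( N \). So the whole task is to show that
\begin{equation*}
    F_{2k} := \sqrt{\tbinom{N}{2k}}\, \proj_{\!\mathcal{N}^{(2k)}_\varphi(u)} \braket{\varphi^{\otimes N-2k}}{H u}_{x_{\intinterval{2k+1}{N}}}
\end{equation*}
is \( \mathcal{O}(1) \), with \( 1/c_{2k} \) treated as a fixed constant. I would split \( H = T + V \), where \( T = \sum_j -\Delta_{x_j} \) and \( V = \frac{1}{N-1}\sum_{j<l} v(x_j-x_l) \), and expand \( u = \sum_j c_{2j}\varphi^{\otimes N-2j}\otimes_s\chi_{2j} \). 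By the definition of \( \otimes_s \), the normalization \( (\mathsf{K}^N_{2j})^{-1}(N-2j)! \) gives \( \varphi^{\otimes N-2j}\otimes_s\chi_{2j} = \binom{N}{2j}^{-1/2}\sum_{A}\varphi^{\otimes A^c}\chi_{2j}(x_A) \), a sum over subsets \( A\subset\intinterval{1}{N} \) with \( |A|=2j \). I would then contract each term against \( \varphi^{\otimes N-2k} \) in the variables \( \intinterval{2k+1}{N} \), using \( \norm{\varphi}{}=1 \) and the orthogonality \eqref{eqn: bogoliubov orthogonality relation}, and classify terms by how many of the \( 2j \) excitation variables fall among the contracted ones.

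For the kinetic part I would use that \( T \) is one-body. A term survives only if, after letting at most one \( -\Delta \) act, every contracted excitation variable meets either \( \varphi \) (which kills it) or \( -\Delta\varphi \). The survivors are of three kinds.
\begin{itemize}
    \item Terms with \( A = \intinterval{1}{2k} \), giving \( \sum_{i\le 2k}-\Delta_{x_i}\chi_{2k} \) plus \( (N-2k)\braket{\varphi}{-\Delta\varphi}\chi_{2k} \). After the prefactors cancel (\( \sqrt{\binom{N}{2k}}\cdot\binom{N}{2k}^{-1/2}=1 \)), the second piece is \( \mathcal{O}(N) \) but proportional to \( \chi_{2k} \), so the projection onto \( \{\chi_{2k}\}^\perp \) removes it.
    \item Terms with \( j=k\pm1 \) in which a single excitation variable is contracted against \( -\Delta\varphi \). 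These produce functions of the form \( \varphi\otimes_s(\cdots) \), or functions carrying a combinatorial weight \( \sqrt{\binom{N}{2k}/\binom{N}{2k\pm2}}\cdot\mathcal{O}(1)=\mathcal{O}(1) \) up to factors like \( \sqrt{N}/N \). Since \( \mathcal{N}^{(2k)}_\varphi \subset \bigotimes_{\mathrm{sym}}^{2k}\{\varphi\}^\perp \), the projection kills every term containing a factor \( \varphi \) in one of the \( 2k \) free variables.
    \item All remaining terms, which are handled by the counting step below.
\end{itemize}

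The potential part follows the same scheme. The diagonal \( j=k \) contributions split into three groups. Pairs of contracted variables give \( \frac{1}{N-1}\binom{N-2k}{2}\braket{\varphi\otimes\varphi}{v\,\varphi\otimes\varphi}\chi_{2k} \), which is \( \mathcal{O}(N) \) and parallel to \( \chi_{2k} \), hence killed by the projection. Mixed pairs give \( \frac{N-2k}{N-1}\sum_i (v*|\varphi|^2)(x_i)\chi_{2k} = \mathcal{O}(1) \). Internal pairs give \( \frac{1}{N-1}\sum v(x_i-x_l)\chi_{2k} = \mathcal{O}(1/N) \). The off-diagonal \( j=k\pm1 \) terms, in which two excitation variables pair with \( \varphi\otimes\varphi \) through \( v \), carry the factor \( \frac{1}{N-1}\times \) (number of pairs \( \sim N \)) \( \times \) the ratio of binomial prefactors \( \sim N^{\pm 1}\cdot N^{\mp 1} \). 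I expect these to be \( \mathcal{O}(1) \); they correspond to the Bogoliubov pairing terms. Terms with \( |j-k|\ge2 \) vanish because \( v \) moves at most two variables out of orthogonality.

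The main obstacle will be the careful bookkeeping of the ratios \( \sqrt{\binom{N}{2k}}\binom{N}{2j}^{-1/2} \) against the number of surviving index configurations. The goal is to check that every term of size \( \mathcal{O}(N) \) is either a multiple of \( \chi_{2k} \) or contains a \( \varphi \) factor in a free variable, since only those are removed by \( \proj_{\!\mathcal{N}^{(2k)}_\varphi(u)} \). I would first carry this out for \( k=1 \), \( n=1 \), with explicit index counting, and then argue that for general \( k \) the counting is the same with \( 2k \) in place of \( 2 \). Boundedness of every remaining piece uses only \( \norm{v}{\infty} \), \( \norm{\varphi}{}=1 \) and \( \norm{\chi_{2j}}{}=1 \); the kinetic terms are \( \mathcal{O}(1) \) in the sense of being \( N \)-independent expressions involving \( \Delta\chi_{2k} \) and \( \Delta\varphi \).
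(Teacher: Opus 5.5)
Your strategy is the paper's: expand \(u\), discard configurations using the orthogonality \eqref{eqn: bogoliubov orthogonality relation}, and let \( \proj_{\!\mathcal{N}^{(2k)}_\varphi(u)} \) remove every large piece that is either parallel to \( \chi_{2k} \) or carries a bare \( \varphi \) in a free variable. The paper organizes the counting differently. Lemmas \ref{lemma: reducing number of effective kinetic operators} and \ref{lemma: reducing number of effective potential operators} replace your sum over subsets by a few effective operators with explicit multiplicities, and the proof then checks the five cases (a)--(e) one by one. The gap is that you stop exactly where the content of the theorem lies. You only \emph{expect} the off-diagonal terms to be \( \mathcal{O}(1) \), and the count you sketch for them is wrong. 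The number of surviving pairs is not \( \sim N \): it is \( \sim N^2 \) for \( j=k+1 \) and \( \mathcal{O}(1) \) for \( j=k-1 \). These counts compensate binomial ratios that go in opposite directions.

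\textbf{Case \( j=k+1 \).} The two contracted excitation variables must be exactly the pair hit by \(v\). There are \( \binom{N-2k}{2} \) such configurations, the weight is \( \sqrt{\binom{N}{2k}/\binom{N}{2k+2}}\sim\sqrt{(2k+1)(2k+2)}/N \), and with \( 1/(N-1) \) you get the bounded term \( c_{2k+2}\braket{\varphi^{\otimes 2}}{v\,\chi_{2k+2}}_{x_{2k+1},x_{2k+2}} \) (the paper's case (e)).

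\textbf{Case \( j=k-1 \).} The weight \( \sqrt{\binom{N}{2k}/\binom{N}{2k-2}} \) is of order \(N\), and before projection these contributions are of order \(N^2\). They come from \(v\) acting on one of the \( \sim N^2/2 \) contracted pairs, or \( -\Delta \) acting on one of the \( N-2k \) contracted variables. They are harmless only because every configuration in which \(v\) does not act on precisely the two free variables carrying \( \varphi \) leaves a bare \( \varphi \) in a free slot, and the projection annihilates it. What remains are the \( \binom{2k}{2} \) terms \( v(x_{i_1}-x_{i_2})\varphi(x_{i_1})\varphi(x_{i_2})\chi_{2k-2} \), each with weight \( \frac{1}{N-1}\sqrt{\binom{N}{2k}/\binom{N}{2k-2}}=\mathcal{O}(1) \) (the paper's case (c)). Your ``pairs \( \sim N \)'' heuristic would instead make this term \( \mathcal{O}(N) \).

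\textbf{Kinetic part.} The same slip appears in your kinetic bullet. The \( j=k-1 \) weight is \( \sim N \), not \( \mathcal{O}(1) \), so those terms are controlled only by the projection. The same holds for the \( j=k \) terms with one excitation variable against \( -\Delta\varphi \), which is the case you actually need there. The \( j=k+1 \) kinetic terms vanish identically.

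\textbf{Missing exchange term.} Your \( j=k \) mixed-pair term omits a contribution. When one excitation variable \(y\) is contracted and \(v(x_i-y)\) couples it to the free variable \(x_i\) carrying \( \varphi \), you get \( \frac{N-2k}{N-1}\sum_{i\le 2k}\braket{\varphi(y)}{v(x_i-y)\varphi(x_i)\,\chi_{2k}(\ldots,y,\ldots)}_y \), and this survives the projection. It is \( \mathcal{O}(1) \), so boundedness is unaffected. Its omission shows that the enumeration must be done configuration by configuration, as in the paper's cases (a)--(e), rather than by analogy with \( k=1 \).
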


\begin{remark}
    Theorem \ref{theorem: setting n>1 no tensorization - chi2dot uniformly bounded wrt N}  shows that the derivatives \( \dot{\chi}_{2k} \) are uniformly bounded with respect to \( N \), by taking \( \varphi \) as the solution to the Hartree equation \eqref{eqn: hartree equation} and \( \phi \) equal to \( \dot{\varphi} \).
\end{remark}

The proof of Theorem \ref{theorem: setting n>1 no tensorization - chi2dot uniformly bounded wrt N} can be greatly simplified if we show first that the number of ``effective'' one- and two-body operators in \( \braket{\varphi^{\otimes N-2}}{-\ii H u}_{x_{\intinterval{3}{N}}} \) is uniformly bounded with respect to \( N \).
More precisely, we first show that the Hamiltonian
\begin{equation*}
    H = \sum_{i=1}^N -\Delta_{x_i} + \frac{1}{N-1} \sum_{1\leq k < l \leq N} v(x_k - x_l) 
\end{equation*}
can be replaced in the computation of the braket \( \braket{\varphi^{\otimes N-2}}{-\ii H u}_{x_{\intinterval{3}{N}}} \) by an ``effective'' Hamiltonian made of a sum of only a small number of operators, uniformly in \( N \).
The essential property that allows this is the symmetry with respect to permutations of variables.
Lemma \ref{lemma: reducing number of effective kinetic operators} gives the ``effective'' kinetic operator, and Lemma \ref{lemma: reducing number of effective potential operators} the ``effective'' potential operator.
We use the notations \( \varphi \) and \( \chi_{2p} \) in both Lemmas merely to stick to our context, however it is not required that they satisfy any of the orthogonality or normalization previously used: only the symmetry of \( \chi_{2p} \) is used.

\begin{lemma}
    \label{lemma: reducing number of effective kinetic operators}
    Define integers \( m\in \intinterval{0}{N} \), \( p\in \intinterval{0}{\frac{N-m}{2} } \), and let \( \varphi \in L^2(\Omega) \), \( \chi_{2p} \in L_s^2(\Omega^{2p}) \) and \( w \in L^2_s(\Omega^N) \).
    Then,
    \begin{equation}
        \label{eqn: use symmetry of u to simplify the computation of braket with Delta u on the rhs}
        \braket{\varphi^{\otimes N-m-2p} \otimes \chi_{2p}}{-\Delta w(x)}_{x_{\intinterval{m+1}{N}}}
        = \begin{aligned}[t]
            &\sum_{1 \leq k \leq m} \braket{\varphi^{\otimes N-m-2p} \otimes \chi_{2p}}{-\Delta_{x_k} w(x)}_{x_{\intinterval{m+1}{N}}} \\
            &\quad+ (N-m-2p) \braket{\varphi^{\otimes N-m-2p} \otimes \chi_{2p}}{-\Delta_{x_{m+1}} w(x)}_{x_{\intinterval{m+1}{N}}} \\
            &\quad+ 2p \braket{\varphi^{\otimes N-m-2p} \otimes \chi_{2p}}{-\Delta_{x_{N-2p+1}} w(x)}_{x_{\intinterval{m+1}{N}}}.
        \end{aligned}
    \end{equation}
\end{lemma}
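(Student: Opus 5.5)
The plan is to expand \( -\Delta = \sum_{k=1}^N -\Delta_{x_k} \) and split the sum over \( k \) into three index blocks: \( k \in \intinterval{1}{m} \), \( k \in \intinterval{m+1}{N-2p} \) (the variables carrying \( \varphi \)), and \( k \in \intinterval{N-2p+1}{N} \) (the variables carrying \( \chi_{2p} \)). By linearity of the braket in its second argument,
\begin{equation*}
    \braket{\varphi^{\otimes N-m-2p} \otimes \chi_{2p}}{-\Delta w(x)}_{x_{\intinterval{m+1}{N}}} = \sum_{k=1}^{N} \braket{\varphi^{\otimes N-m-2p} \otimes \chi_{2p}}{-\Delta_{x_k} w(x)}_{x_{\intinterval{m+1}{N}}}.
\end{equation*}
The first block is kept as is and gives the first line of \eqref{eqn: use symmetry of u to simplify the computation of braket with Delta u on the rhs}. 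For the other two blocks we show that all terms within a block coincide.

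For \( k \in \intinterval{m+1}{N-2p} \), let \( \sigma \) be the transposition of \( m+1 \) and \( k \). Since \( w \in L^2_s(\Omega^N) \), we have \( w(x) = w(x_\sigma) \) and hence \( (\Delta_{x_k} w)(x) = (\Delta_{x_{m+1}} w)(x_\sigma) \). We then change integration variables \( x_k \leftrightarrow x_{m+1} \) in the integral over \( x_{\intinterval{m+1}{N}} \). Both indices lie in the integrated range, so the free variables \( x_1,\dots,x_m \) are untouched. The bra \( \varphi^{\otimes N-m-2p} \otimes \chi_{2p} \) is invariant under this swap, because both slots carry the same factor \( \varphi \). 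This gives \( N-m-2p \) equal copies of the \( x_{m+1} \) term.

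For \( k \in \intinterval{N-2p+1}{N} \) we argue the same way with the transposition of \( k \) and \( N-2p+1 \). Now the bra's invariance comes from the symmetry of \( \chi_{2p} \in L^2_s(\Omega^{2p}) \) under permutation of its own arguments. This gives \( 2p \) copies of the \( x_{N-2p+1} \) term.

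The only delicate point is rigor. The Laplacian and the change of variables need \( w \) regular enough, e.g. \( w \in H^2 \), or else the identity should be read weakly or distributionally. I would state the swap identity once for a general permutation \( \sigma \) fixing \( \intinterval{1}{m} \) and preserving the \( \varphi \)-block and the \( \chi \)-block setwise, and then apply it. Degenerate cases such as \( p=0 \) or \( N-m-2p=0 \) correspond to empty sums and need no separate treatment.
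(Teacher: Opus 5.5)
Your proposal is correct and takes essentially the same route as the paper. You split $-\Delta=\sum_k -\Delta_{x_k}$ into the three index blocks, then collapse the middle and last blocks by combining the symmetry of $w$ with a change of variables, justified by the symmetry of $\varphi^{\otimes N-m-2p}$ and of $\chi_{2p}$ respectively, with your explicit identity $(\Delta_{x_k} w)(x) = (\Delta_{x_{m+1}} w)(x_\sigma)$ and regularity remark making the paper's informal swap argument slightly more precise.
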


\begin{remark}
    The kinetic operator on the left-hand side is \( -\Delta = \sum_{i=1}^N -\Delta_{x_i} \) and involves \( N \) one-body operators, while the ``effective'' kinetic operator in the right-hand side involves only \( m+2 \) operators.
\end{remark}

\begin{proof}
    We have 
    \begin{align}
        \braket{\varphi^{\otimes N-m-2p} \otimes \chi_{2p}}{-\Delta w(x)}_{x_{\intinterval{m+1}{N}}}
        &= \sum_{1\leq k \leq N} \braket{\varphi^{\otimes N-m-2p} \otimes \chi_{2p}}{-\Delta_{x_k} w(x)}_{x_{\intinterval{m+1}{N}}} \nonumber  \\
        &= \begin{aligned}[t]
            &\sum_{1\leq k \leq m} \braket{\varphi^{\otimes N-m-2p} \otimes \chi_{2p}}{-\Delta_{x_k} w(x)}_{x_{\intinterval{m+1}{N}}} \\
            &\quad+ \sum_{m+1 \leq k \leq N-2p} \braket{\varphi^{\otimes N-m-2p} \otimes \chi_{2p}}{-\Delta_{x_k} w(x)}_{x_{\intinterval{m+1}{N}}} \\
            &\quad+ \sum_{N-2p+1 \leq k} \braket{\varphi^{\otimes N-m-2p} \otimes \chi_{2p}}{-\Delta_{x_k} w(x)}_{x_{\intinterval{m+1}{N}}}.
        \end{aligned}
        \label{eqn: proof reduction nb operators laplacian - expansion total sum}
    \end{align}
    For any \( k\in \intinterval{m+1}{N-2p} \), by symmetry of \( w \)
    \begin{equation*}
        \braket{\varphi^{\otimes N-m-2p} \otimes \chi_{2p}}{-\Delta_{x_k} w(x)}_{x_{\intinterval{m+1}{N}}}
        = \braket{\varphi^{\otimes N-m-2p} \otimes \chi_{2p}}{-\Delta_{x_k} w(x_1, \dots, x_{k}, \dots, x_{m+1}, \dots, x_N)}_{x_{\intinterval{m+1}{N}}}.
    \end{equation*}
    Only the variables \( x_k \) and \( x_{m+1} \) have been exchanged.
    Due to the symmetry of \( \varphi^{\otimes N-m-2p}(x_{m+1}, \dots, x_{N-2p}) \), we perform a change of variables \( x_k \leftrightarrow x_{m+1} \) and obtain
    \begin{equation*}
        \braket{\varphi^{\otimes N-m-2p} \otimes \chi_{2p}}{-\Delta_{x_k} w(x)}_{x_{\intinterval{m+1}{N}}}
        = \braket{\varphi^{\otimes N-m-2p} \otimes \chi_{2p}}{-\Delta_{x_{m+1}} w(x)}_{x_{\intinterval{m+1}{N}}}.
    \end{equation*}
    Note that the right-hand side does not depend on \( k \) anymore, thus the second sum in \eqref{eqn: proof reduction nb operators laplacian - expansion total sum} simplifies to 
    \begin{equation*}
        (N-m-2p) \braket{\varphi^{\otimes N-m-2p} \otimes \chi_{2p}}{-\Delta_{x_{m+1}} w(x)}_{x_{\intinterval{m+1}{N}}}.
    \end{equation*}
    For the third sum in \eqref{eqn: proof reduction nb operators laplacian - expansion total sum}, we use the same arguments: for \( k\in \intinterval{N-2p+1}{N} \), the symmetry of \( w \) gives us
    \begin{equation*}
        w(x) = w(x_1, \dots, x_{N-2p+1}, \dots, x_k, \dots, x_N) = w(x_1, \dots, x_k, \dots, x_{N-2p+1}, \dots, x_N),
    \end{equation*}
    where only the variables \( x_k \) and \( x_{N-2p+1} \) have been exchanged.
    The symmetry of \( \chi_{2p} \) allows us to perform the change of variables \( x_k \leftrightarrow x_{N-2p+1} \), and the kinetic operator is now on variable \( x_{N-2p+1} \).
    Finally, the third sum in \eqref{eqn: proof reduction nb operators laplacian - expansion total sum} simplifies to 
    \begin{equation*}
        2p \braket{\varphi^{\otimes N-m-2p} \otimes \chi_{2p}}{-\Delta_{x_{N-2p+1}} w(x)}_{x_{\intinterval{m+1}{N}}}.
    \end{equation*}
\end{proof}

Similar ideas can be used for the two-body potential operator.
\begin{lemma}
    \label{lemma: reducing number of effective potential operators}
    Define integers \( m\in \intinterval{0}{N} \), \( p\in \intinterval{0}{\frac{N-m}{2} } \), and let \( \varphi \in L^2(\Omega) \), \( \chi_{2p} \in L_s^2(\Omega^{2p}) \) and \( w \in L^2_s(\Omega^N) \).
    Then,
    \begin{multline*}
        \label{eqn: use symmetry of u to simplify the computation of braket with Vu on the rhs}
            (N-1)\braket{\varphi^{\otimes N-m-2p} \otimes \chi_{2p}}{Vw(x)}_{x_{\intinterval{m+1}{N}}} \\
            = \begin{aligned}[t]
                &\sum_{1 \leq k < l \leq m} \braket{\varphi^{\otimes N-m-2p} \otimes \chi_{2p}}{v(x_{k} - x_{l}) w(x)}_{x_{\intinterval{m+1}{N}}} \\
                &\quad+ (N-m-2p) \sum_{k=1}^m \braket{\varphi^{\otimes N-m-2p} \otimes \chi_{2p}}{v(x_k - x_{m+1}) w(x)}_{x_{\intinterval{m+1}{N}}} \\
                &\quad + 2p \sum_{k=1}^m \braket{\varphi^{\otimes N-m-2p} \otimes \chi_{2p}}{v(x_k - x_{N-2p+1}) w(x)}_{x_{\intinterval{m+1}{N}}} \\
                &\quad+ \frac{(N-m-2p-1)(N-m-2p)}{2} \braket{\varphi^{\otimes N-m-2p} \otimes \chi_{2p}}{v(x_{m+1} - x_{m+2}) w(x)}_{x_{\intinterval{m+1}{N}}} \\
                &\quad+ 2p(N-m-2p) \braket{\varphi^{\otimes N-m-{2p}} \otimes \chi_{2p}}{v(x_{m+1} - x_{N-2p+1}) w(x)}_{x_{\intinterval{m+1}{N}}} \\
                &\quad+ \frac{2p(2p-1)}{2} \braket{\varphi^{\otimes N-m-{2p}} \otimes \chi_{2p}}{v(x_{N-2p+1} - x_{N-2p+2}) w(x)}_{x_{\intinterval{m+1}{N}}}.
            \end{aligned}
    \end{multline*}
\end{lemma}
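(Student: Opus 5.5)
The proof follows the same symmetrization argument as Lemma \ref{lemma: reducing number of effective kinetic operators}, now applied to pairs of variables. We partition the index set \( \intinterval{1}{N} \) into three blocks: \( A = \intinterval{1}{m} \) (free variables), \( B = \intinterval{m+1}{N-2p} \) (variables integrated against \( \varphi \)), and \( C = \intinterval{N-2p+1}{N} \) (variables integrated against \( \chi_{2p} \)). Since \( (N-1)V = \sum_{1\le k<l\le N} v(x_k-x_l) \), we write
\begin{equation*}
    (N-1)\braket{\varphi^{\otimes N-m-2p} \otimes \chi_{2p}}{Vw(x)}_{x_{\intinterval{m+1}{N}}} = \sum_{1\le k<l\le N} \braket{\varphi^{\otimes N-m-2p} \otimes \chi_{2p}}{v(x_k-x_l) w(x)}_{x_{\intinterval{m+1}{N}}}.
\end{equation*}
We then split the pairs \( \{k,l\} \) according to which blocks contain \( k \) and \( l \). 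This gives six types: \( AA \), \( AB \), \( AC \), \( BB \), \( BC \), \( CC \). They correspond exactly to the six lines of the claimed identity.

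The \( AA \) terms are kept as they are. For every other type, we map a generic pair to a fixed representative pair by a transposition (or a product of two transpositions) of integration variables that stays inside each block. For an \( AB \) pair \( (k,l) \) with \( l\in B \), we use the symmetry of \( w \) to swap the arguments \( x_l \leftrightarrow x_{m+1} \). We then rename the integration variables \( x_l \leftrightarrow x_{m+1} \). This renaming leaves \( \varphi^{\otimes N-m-2p}(x_{m+1},\dots,x_{N-2p}) \) invariant and does not touch the free variable \( x_k \). So the term equals the one with \( v(x_k - x_{m+1}) \). Counting \( |B| = N-m-2p \) choices of \( l \) for each \( k \) gives the second line. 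The \( AC \) case is identical, using the symmetry of \( \chi_{2p} \) to send \( l \) to \( N-2p+1 \); it gives the factor \( 2p \) in the third line.

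For the pairs lying inside the integrated blocks, the two variables are moved simultaneously. For a \( BB \) pair \( k<l \), we apply a permutation of \( B \) sending \( k\mapsto m+1 \) and \( l\mapsto m+2 \), which we can take to be a product of at most two transpositions. Both \( w \) and the \( \varphi \)-product are invariant under it, and \( v(x_k-x_l) \) becomes \( v(x_{m+1}-x_{m+2}) \). There are \( \binom{N-m-2p}{2} \) such pairs, giving the fourth line. A \( BC \) pair is handled by independent transpositions in \( B \) and in \( C \), sending it to \( (m+1, N-2p+1) \); there are \( (N-m-2p)\cdot 2p \) such pairs, giving the fifth line. A \( CC \) pair is sent to \( (N-2p+1,N-2p+2) \) using the symmetry of \( \chi_{2p} \); there are \( \binom{2p}{2} \) such pairs, giving the last line. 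Because \( v \) is even, the ordering of the two variables inside \( v \) does not matter.

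The only delicate point is the \( BB \) and \( CC \) cases. There the permutation must be checked to be a genuine relabeling of integration variables that stays within a single block, so that it preserves the integrand \( \conj{\varphi^{\otimes N-m-2p}\otimes\chi_{2p}} \) and does not act on the free variables \( x_1,\dots,x_m \). Once we compose two transpositions carefully, handling the case where one of \( k,l \) already equals a target index, this is routine. The counting of pairs per type is then elementary.
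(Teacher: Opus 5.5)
Your proof is correct and follows the same route as the paper. The paper also splits the pairs $(k,l)$ according to the blocks $\intinterval{1}{m}$, $\intinterval{m+1}{N-2p}$ and $\intinterval{N-2p+1}{N}$, and reduces each generic pair to a representative by relabeling integration variables within a block, using the symmetry of $w$, of $\varphi^{\otimes N-m-2p}$ and of $\chi_{2p}$; your write-up just makes explicit the case analysis and pair counting that the paper leaves to ``the same arguments as before''.
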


\begin{remark}
    The potential operator on the left-hand side is \( V = \sum_{1\leq k < l \leq N} v(x_k - x_l) \) and involves \( N(N-1)/2 \) two-body operators, while the ``effective'' potential operator on the right-hand side involves only \( m(m-1)/2 + 2m + 3 \) operators.
\end{remark}

\begin{proof}
    The proof is very similar to the proof of Lemma \ref{lemma: reducing number of effective kinetic operators}, we just need to deal with a two-body operator instead of a one-body operator.
    We have
    \begin{equation}
        \begin{aligned}
            (N-1)\braket{\varphi^{\otimes N-m-2p} \otimes \chi_{2p}}{Vw(x)}_{x_{\intinterval{m+1}{N}}}
            &=\sum_{k=1}^m \sum_{l=k+1}^m \braket{\varphi^{\otimes N-m-2p} \otimes \chi_{2p}}{v(x_k - x_l) w(x)}_{x_{\intinterval{m+1}{N}}} \\
            &\quad+ \sum_{k=1}^m \sum_{l=m+1}^N \braket{\varphi^{\otimes N-m-2p} \otimes \chi_{2p}}{v(x_k - x_l) w(x)}_{x_{\intinterval{m+1}{N}}} \\
            &\quad+ \sum_{k=m+1}^{N-1} \sum_{l=k+1}^N \braket{\varphi^{\otimes N-m-2p} \otimes \chi_{2p}}{v(x_k - x_l) w(x)}_{x_{\intinterval{m+1}{N}}}.
        \end{aligned}
        \label{eqn: use symmetry of u to simplify the computation of braket with V on the rhs - split sums over k and l}
    \end{equation}
    We distinguish the following situations:
    \begin{itemize}
        \item \( k \in \intinterval{1}{m}, l\in \intinterval{m+1}{N-2p} \)
        \item \( k \in \intinterval{1}{m}, l\in \intinterval{N-2p+1}{N} \)
        \item \( k \in \intinterval{m+1}{N-2p}, l\in \intinterval{k+1}{N-2p} \)
        \item \( k \in \intinterval{m+1}{N-2p}, l\in \intinterval{N-2p+1}{N} \)
        \item \( k \in \intinterval{N-2p+1}{N}, l\in \intinterval{k+1}{N} \),
    \end{itemize}
    and for each of them the same arguments as before can be used.
\end{proof}

Lemmas \ref{lemma: reducing number of effective kinetic operators} and \ref{lemma: reducing number of effective potential operators} allow us to prove Theorem \ref{theorem: setting n>1 no tensorization - chi2dot uniformly bounded wrt N} in a rather concise way.

\begin{proof}[Proof of Theorem \ref{theorem: setting n>1 no tensorization - chi2dot uniformly bounded wrt N}]
    The functions \( \varphi, \phi \) and \( \chi_{2k} \) are of unit norm, therefore the quantity \( \sqrt{2k} \varphi \otimes_s \braket{\phi}{\chi_{2k}(y, \cdot)}_y \) is uniformly bounded with respect to \( N \).
    We now need to show that \( \frac{1}{c_{2k}} \sqrt{\binom{N}{2k}} \proj_{\mathcal{N}^{(2k)}_\varphi(u)} \braket{\varphi^{\otimes N-2k}}{-\ii H u}_{x_{\intinterval{2k+1}{N}}} \) is uniformly bounded with respect to \( N \).
    By Lemmas \ref{lemma: reducing number of effective kinetic operators} and \ref{lemma: reducing number of effective potential operators}, it is sufficient to show that it is the case for the following quantities:
    \begin{enumerate}[label=(\alph{*})]
        \item \( \sqrt{\binom{N}{2k}} \proj_{\mathcal{N}^{(2k)}_\varphi(u)}\braket{\varphi^{\otimes N-2k}}{-\Delta_{x_i} u}_{x_{\intinterval{2k+1}{N}}} \), \( i=1, \dots, 2k \).
        
        \item \( (N-2k)\sqrt{\binom{N}{2k}} \proj_{\mathcal{N}^{(2k)}_\varphi(u)}\braket{\varphi^{\otimes N-2k}}{-\Delta_{x_{2k+1}} u}_{x_{\intinterval{2k+1}{N}}} \).
        
        \item \( \frac{\sqrt{\binom{N}{2k}}}{N-1} \proj_{\mathcal{N}^{(2k)}_\varphi(u)}\braket{\varphi^{\otimes N-2k}}{v(x_{i_1} - x_{i_2}) u}_{x_{\intinterval{2k+1}{N}}} \), \( 1 \leq i_1 < i_2 \leq 2k \).
        
        \item \( \frac{\sqrt{\binom{N}{2k}}}{N-1} (N-2k) \proj_{\mathcal{N}^{(2k)}_\varphi(u)}\braket{\varphi^{\otimes N-2k}}{v(x_i - x_{2k+1}) u}_{x_{\intinterval{2k+1}{N}}} \), \( 1 \leq i \leq 2k \).
        
        \item \( \frac{\sqrt{\binom{N}{2k}}}{N-1} \frac{(N-2k)(N-2k-1)}{2} \proj_{\mathcal{N}^{(2k)}_\varphi(u)}\braket{\varphi^{\otimes N-2k}}{v(x_{2k+1} - x_{2k+2}) u}_{x_{\intinterval{2k+1}{N}}} \).
    \end{enumerate}

    In what follows, we will use the orthogonality \eqref{eqn: bogoliubov orthogonality relation}, the symmetry of all excitation functions \( \chi_{2j} \), and the fact that the orthogonal projection onto \( \mathcal{N}^{(2k)}_\varphi(u) \) eliminates all multiples of \( \chi_{2k} \) as well as all functions of \( 2k \) variables with a marginal that is a multiple of \( \varphi \).
    These arguments will be used many times throughout this proof. We gather them here to avoid repeating them every time they are used.

    For the term (a), with \( 1 \leq i \leq 2k \), we have
    \begin{multline*}
        \sqrt{\binom{N}{2k}} \proj_{\mathcal{N}^{(2k)}_\varphi(u)}\braket{\varphi^{\otimes N-2k}}{-\Delta_{x_i} u}_{x_{\intinterval{2k+1}{N}}} \\
        = \sqrt{\binom{N}{2k}} \sum_{j=0}^n c_{2j} \proj_{\mathcal{N}^{(2k)}_\varphi(u)}\braket{\varphi^{\otimes N-2k}}{-\Delta_{x_i} (\varphi^{\otimes N-2j} \otimes_s \chi_{2j})}_{x_{\intinterval{2k+1}{N}}}.
    \end{multline*}
    The right-hand side has a vanishing summand for \( j\neq k \), hence
    \begin{align*}
        \sqrt{\binom{N}{2k}} \proj_{\mathcal{N}^{(2k)}_\varphi(u)}\braket{\varphi^{\otimes N-2k}}{-\Delta_{x_i} u}_{x_{\intinterval{2k+1}{N}}} 
        &= c_{2k} \proj_{\mathcal{N}^{(2k)}_\varphi(u)} (-\Delta_{x_i} \chi_{2k})
    \end{align*}
    is uniformly bounded with respect to \( N \). 
    For the term (b), we have
    \begin{multline*}
        (N-2k)\sqrt{\binom{N}{2k}} \proj_{\mathcal{N}^{(2k)}_\varphi(u)}\braket{\varphi^{\otimes N-2k}}{-\Delta_{x_{2k+1}} u}_{x_{\intinterval{2k+1}{N}}} \\
        = (N-2k)\sqrt{\binom{N}{2k}} \sum_{j=0}^n c_{2j} \proj_{\mathcal{N}^{(2k)}_\varphi(u)}\braket{\varphi^{\otimes N-2k}}{-\Delta_{x_{2k+1}} \varphi^{\otimes N-2j} \otimes_s \chi_{2j}}_{x_{\intinterval{2k+1}{N}}}
    \end{multline*}
    All summands vanish.
    For the term (c), with \( 1 \leq i_1 < i_2 \leq 2k \), we have
    \begin{multline}
        \frac{\sqrt{\binom{N}{2k}}}{N-1} \proj_{\mathcal{N}^{(2k)}_\varphi(u)}\braket{\varphi^{\otimes N-2k}}{v(x_{i_1} - x_{i_2}) u}_{x_{\intinterval{2k+1}{N}}} \\
        = \frac{\sqrt{\binom{N}{2k}}}{N-1} \sum_{j=0}^n c_{2j} \proj_{\mathcal{N}^{(2k)}_\varphi(u)}\braket{\varphi^{\otimes N-2k}}{v(x_{i_1} - x_{i_2}) \varphi^{\otimes N-2j}\otimes_s \chi_{2j}}_{x_{\intinterval{2k+1}{N}}}.
    \end{multline}
    The right-hand side has a vanishing summand for \( j<k-1 \) and \( j>k \).
    With \( j=k-1 \), we get
    \begin{multline*}
        \proj_{\mathcal{N}^{(2k)}_\varphi(u)}\braket{\varphi^{\otimes N-2k}}{v(x_{i_1} - x_{i_2}) \varphi^{\otimes N-2k+2}\otimes_s \chi_{2k-2}}_{x_{\intinterval{2k+1}{N}}} \\
        = \frac{1}{\sqrt{\binom{N}{2k-2}}} \proj_{\mathcal{N}^{(2k)}_\varphi(u)} \left( v(x_{i_1}-x_{i_2}) \varphi(x_{i_1}) \varphi(x_{i_2}) \chi_{2k-2}(x_{\intinterval{1}{2k}\setminus \{i_1, i_2\}}) \right).
    \end{multline*}
    With \( j=k \), we get
    \begin{align*}
        \braket{\varphi^{\otimes N-2k}}{v(x_{i_1} - x_{i_2}) \varphi^{\otimes N-2k}\otimes_s \chi_{2k}}_{x_{\intinterval{2k+1}{N}}}
        &= \frac{1}{\sqrt{\binom{N}{2k}}} \proj_{\mathcal{N}^{(2k)}_\varphi(u)} \left( v(x_{i_1} - x_{i_2}) \chi_{2k}(x_{\intinterval{1}{2k}}) \right).
    \end{align*}
    Thus, 
    \begin{multline*}
        \frac{\sqrt{\binom{N}{2k}}}{N-1} \proj_{\mathcal{N}^{(2k)}_\varphi(u)}\braket{\varphi^{\otimes N-2k}}{v(x_{i_1} - x_{i_2}) u}_{x_{\intinterval{2k+1}{N}}} \\
        = \frac{\sqrt{\binom{N}{2k}}}{(N-1)\sqrt{\binom{N}{2k-2}}} \proj_{\mathcal{N}^{(2k)}_\varphi(u)} \left( v(x_{i_1}-x_{i_2}) \varphi(x_{i_1}) \varphi(x_{i_2}) \chi_{2k-2}(x_{\intinterval{1}{2k}\setminus \{i_1, i_2\}}) \right) \\
        + \frac{\sqrt{\binom{N}{2k}}}{(N-1)\sqrt{\binom{N}{2k}}} \proj_{\mathcal{N}^{(2k)}_\varphi(u)} \left( v(x_{i_1} - x_{i_2}) \chi_{2k}(x_{\intinterval{1}{2k}}) \right),
    \end{multline*}
    and the right-hand side is uniformly bounded with respect to \( N \).
    For the term (d), with \( 1 \leq i \leq 2k \), we have
    \begin{multline*}
        \frac{\sqrt{\binom{N}{2k}}}{N-1} (N-2k) \proj_{\mathcal{N}^{(2k)}_\varphi(u)}\braket{\varphi^{\otimes N-2k}}{v(x_i - x_{2k+1}) u}_{x_{\intinterval{2k+1}{N}}} \\
        = \frac{\sqrt{\binom{N}{2k}}}{N-1} (N-2k) \sum_{j=0}^n c_{2j} \proj_{\mathcal{N}^{(2k)}_\varphi(u)}\braket{\varphi^{\otimes N-2k}}{v(x_i - x_{2k+1}) \varphi^{\otimes N-2j} \otimes_s \chi_{2j}}_{x_{\intinterval{2k+1}{N}}}.
    \end{multline*}
    The right-hand side has a vanishing summand for \( j\neq k \), hence
    \begin{multline*}
        \frac{\sqrt{\binom{N}{2k}}}{N-1} (N-2k) \proj_{\mathcal{N}^{(2k)}_\varphi(u)}\braket{\varphi^{\otimes N-2k}}{v(x_i - x_{2k+1}) u}_{x_{\intinterval{2k+1}{N}}} \\
        = \frac{N-2k}{N-1} c_{2k} \proj_{\mathcal{N}^{(2k)}_\varphi(u)} \left( \begin{gathered}
            \proj_{\mathcal{N}^{(2k)}_\varphi(u)}\braket{\varphi}{v(x_i - x_{2k+1}) \varphi(x_{2k+1}) \chi_{2k}(x_{\intinterval{1}{2k}})}_{x_{2k+1}} \\
            + \proj_{\mathcal{N}^{(2k)}_\varphi(u)}\braket{\varphi}{v(x_i - x_{2k+1}) \varphi(x_i) \chi_{2k}(x_{\intinterval{1}{2k+1}\setminus\{i\}})}_{x_{2k+1}}
        \end{gathered} \right),
    \end{multline*}
    which is uniformly bounded with respect to \( N \).
    For the term (e), we have
    \begin{multline*}
        \frac{\sqrt{\binom{N}{2k}}}{N-1} \frac{(N-2k)(N-2k-1)}{2} \proj_{\mathcal{N}^{(2k)}_\varphi(u)}\braket{\varphi^{\otimes N-2k}}{v(x_{2k+1} - x_{2k+2}) u}_{x_{\intinterval{2k+1}{N}}} \\
        = \frac{\sqrt{\binom{N}{2k}}}{N-1} \frac{(N-2k)(N-2k-1)}{2} \sum_{j=0}^n c_{2j} \proj_{\mathcal{N}^{(2k)}_\varphi(u)}\braket{\varphi^{\otimes N-2k}}{v(x_{2k+1} - x_{2k+2}) \varphi^{\otimes N-2j} \otimes_s \chi_{2j}}_{x_{\intinterval{2k+1}{N}}}.
    \end{multline*}
    The right-hand side has a vanishing summand for \( j\neq k+1 \).
    Hence,
    \begin{multline*}
        \frac{\sqrt{\binom{N}{2k}}}{N-1} \frac{(N-2k)(N-2k-1)}{2} \proj_{\mathcal{N}^{(2k)}_\varphi(u)}\braket{\varphi^{\otimes N-2k}}{v(x_{2k+1} - x_{2k+2}) u}_{x_{\intinterval{2k+1}{N}}} \\
        = \frac{\sqrt{\binom{N}{2k}}}{N-1} \frac{(N-2k)(N-2k-1)}{2} \frac{1}{\sqrt{\binom{N}{2k+2}}} c_{2k+2} \proj_{\mathcal{N}^{(2k)}_\varphi(u)}\braket{\varphi^{\otimes 2}}{v(x_{2k+1} - x_{2k+2}) \chi_{2k+2}}_{x_{\{2k+1, 2k+2\}}},
    \end{multline*}
    which is uniformly bounded with respect to \( N \).
\end{proof}

We can now proceed with the equation of motion of the coefficients \( c_{2k} \).
Define the \( (n+1)\times (n+1) \) time-dependent Hermitian matrix
\begin{equation*}
    \mathbf{H}(t) := \left( \braket{\varphi(t)^{\otimes N-2j}\otimes_s \chi_{2j}(t)}{H\varphi(t)^{\otimes N-2k} \otimes_s \chi_{2k}(t)} \right)_{j,k=0}^n,
\end{equation*}
then
\begin{equation}
    \label{eqn: setting n>1 - equations of motion coeffs}
    \ii \dot{\mathbf{c}}(t) = \check{\mathbf{H}}(t) \mathbf{c}(t), \quad \text{ with }
    \check{\mathbf{H}}(t) = \mathbf{H}(t) - \ii \braket{\varphi(t)}{\dot{\varphi}(t)} \diag(N, N-2, \dots, N-2n).
\end{equation}
We have the following result:
\begin{theorem}
    \label{theorem: unitary evolution coeffs}
    Assume \( \varphi \) is solution to the Hartree equation \eqref{eqn: hartree equation}.
    Then, for all times \( t \), the entries of the Hermitian matrix \( \check{\mathbf{H}}(t) \) are slowly varying with respect to time and uniformly bounded with respect to \( N \).
\end{theorem}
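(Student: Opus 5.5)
The plan is to compute every entry of \( \check{\mathbf{H}}(t) \) explicitly, using the reduction Lemmas \ref{lemma: reducing number of effective kinetic operators} and \ref{lemma: reducing number of effective potential operators} with \( m=0 \). After that, we check that all contributions of order \( N \) cancel exactly on the diagonal, and that off-diagonal entries are \( \mathcal{O}(1) \) from the outset. Throughout, we use the orthogonality \eqref{eqn: bogoliubov orthogonality relation}, \( \norm{\varphi}{}=\norm{\chi_{2k}}{}=1 \), and the fact that each one-body operator changes at most one \( \varphi \)-slot and each two-body operator at most two. Consequently \( \mathbf{H}_{jk}=0 \) whenever \( |j-k|\geq 2 \).

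\emph{Off-diagonal entries, \( |j-k|=1 \).} Only the potential part contributes. By symmetry it suffices to take \( k=j+1 \). By Lemma \ref{lemma: reducing number of effective potential operators}, the surviving term is the one with two condensate variables, \( v(x_{2j+1}-x_{2j+2}) \), applied against \( \chi_{2j+2} \). Its prefactor is \( \frac{1}{N-1}\binom{N-2j}{2}\sqrt{\binom{N}{2j}/\binom{N}{2j+2}} \), which is \( \mathcal{O}(1) \) exactly as in term (e) of the proof of Theorem \ref{theorem: setting n>1 no tensorization - chi2dot uniformly bounded wrt N}. It multiplies \( \braket{\varphi\otimes\varphi\otimes\chi_{2j}}{v(x_{1}-x_{2})\,\chi_{2j+2}} \)-type brakets, which are bounded since \( v \) is bounded. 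These entries involve no \( \braket{\varphi}{\dot\varphi} \) correction.

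\emph{Diagonal entries.} Write \( T=\braket{\varphi}{-\Delta\varphi} \) and \( 2\mu_\varphi=\braket{\varphi\otimes\varphi}{v(x_1-x_2)\,\varphi\otimes\varphi} \), so that \( \mu_\varphi \) is the quantity in \eqref{eqn: mu in hartree equation}. The reduction lemmas give
\[
\mathbf{H}_{kk}=(N-2k)T+\braket{\chi_{2k}}{-\Delta\chi_{2k}}+\frac{(N-2k)(N-2k-1)}{N-1}\mu_\varphi+R_k,
\]
where \( R_k \) collects the condensate–excitation and excitation–excitation potential terms. These carry prefactors \( \frac{2k(N-2k)}{N-1} \) and \( \frac{2k(2k-1)}{2(N-1)} \) respectively, so \( R_k=\mathcal{O}(1) \). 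From the Hartree equation \eqref{eqn: hartree equation}, taking the inner product with \( \varphi \) gives \( \ii\braket{\varphi}{\dot\varphi}=T+2\mu_\varphi-\mu_\varphi=T+\mu_\varphi \), which is real. Therefore
\[
\check{\mathbf{H}}_{kk}=\mathbf{H}_{kk}-(N-2k)(T+\mu_\varphi)=\braket{\chi_{2k}}{-\Delta\chi_{2k}}+R_k-\frac{2k(N-2k)}{N-1}\mu_\varphi .
\]
The cancellation of \( (N-2k)T \) and of the leading \( N\mu_\varphi \) term is exact, and this is precisely where the normalization \( \mu_{\varphi} \) earns its place. What remains is \( \mathcal{O}(1) \) under the standing assumption that \( \chi_{2k} \) has finite kinetic energy. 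Since \( \braket{\varphi}{\dot\varphi}\in\ii\mathbb{R} \), the corrected matrix stays Hermitian.

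\emph{Slow variation.} After the cancellation, every entry is an explicit \( N \)-independent combination of finitely many brakets in \( \varphi,\chi_2,\dots,\chi_{2n} \) and \( v \), with rational coefficients in \( N \) that are bounded. Its time derivative is therefore a sum of the same brakets with one argument replaced by \( \dot\varphi \) or \( \dot\chi_{2j} \). The derivative \( \dot\varphi \) is \( N \)-independent by the Hartree equation, and \( \dot\chi_{2j} \) is \( \mathcal{O}(1) \) by Theorem \ref{theorem: setting n>1 no tensorization - chi2dot uniformly bounded wrt N} (for \( j\geq2 \), via the tensorization and Lemma \ref{lemma: equations of motion angles}). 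Hence all entries have \( \mathcal{O}(1) \) time derivatives.

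The main obstacle is the diagonal bookkeeping. It requires getting the combinatorial prefactors in Lemma \ref{lemma: reducing number of effective potential operators} combined with the normalization constants \( \mathsf{K} \) of \( \otimes_s \) exactly right, so that the \( N \)-order terms cancel identically rather than merely to leading order. I would first verify this for \( k=0 \), where \( u=\varphi^{\otimes N} \) and \( \mathbf{H}_{00}=NT+N\mu_\varphi \) is immediate, and then for \( k=1 \), before treating general \( k \).
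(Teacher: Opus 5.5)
Your proposal is correct and follows essentially the same route as the paper. Both compute the entries of \( \mathbf{H} \) explicitly through Lemmas \ref{lemma: reducing number of effective kinetic operators} and \ref{lemma: reducing number of effective potential operators} (as in Appendix \ref{appendix: components bfH}), cancel the order-\( N \) diagonal part using \( \ii\braket{\varphi}{\dot\varphi} = \braket{\varphi}{-\Delta\varphi} + \mu_{\varphi} \) from the Hartree equation, bound the off-diagonal prefactor \( \frac{\sqrt{(N-2j)(N-2j-1)(2j+2)(2j+1)}}{2(N-1)} \), and obtain slow variation from Theorem \ref{theorem: setting n>1 no tensorization - chi2dot uniformly bounded wrt N}; the only difference is that you carry out the diagonal cancellation exactly, identifying the residual \( -\frac{2k(N-2k)}{N-1}\mu_\varphi \), whereas the paper stops at \( \mathbf{H}_{j,j} = N\braket{\varphi}{\ii\dot{\varphi}} + \mathcal{O}(1) \).
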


\begin{proof}
    The matrix \( \check{\mathbf{H}} \) is Hermitian since \( \mathbf{H} \) is Hermitian and \( \braket{\varphi}{\ii\dot{\varphi}} \in \mathbb{R} \).
    For \( 0 \leq j \leq n \), explicit computations give (details in Appendix \ref{appendix: components bfH}):
    \begin{equation*}
        \mathbf{H}_{j,j} 
        = \begin{aligned}[t]
            &\frac{(N-2j)(N-2j-1)}{2(N-1)} \braket{\varphi^{\otimes 2}}{v(x_1-x_2) \varphi^{\otimes 2}} + \frac{2j(2j-1)}{2(N-1)} \braket{\chi_{2j}}{v(x_1-x_2) \chi_{2j}} \\
            &+\frac{2j(N-2j)}{N-1} \Big( \braket{\chi_{2j} \otimes \varphi}{v(x_1-x_{2j+1}) \varphi \otimes \chi_{2j}} + \braket{\chi_{2j} \otimes \varphi}{v(x_1 - x_{2j+1}) \chi_{2j} \otimes \varphi} \Big) \\
            &+ (N-2j) \braket{\varphi}{-\Delta \varphi} + 2j \braket{\chi_{2j}}{-\Delta_{x_1}\chi_{2j}}.
        \end{aligned}
    \end{equation*}
    All the brakets in \( \mathbf{H}_{j,j} \) are uniformly bounded with respect to \( N \), thus we can write \( \mathbf{H}_{j,j} = \beta_1 N + \beta_2 + \beta_3 / N \) for some coefficients \( \beta_1, \beta_2, \beta_3 \in \mathbb{R} \) that are independent of \( N \).
    More precisely, we have
    \begin{equation*}
        \mathbf{H}_{j,j} = N \braket{\varphi}{-\Delta\varphi} + \frac{N}{2} \braket{\varphi^{\otimes 2}}{v(x_1-x_2) \varphi^{\otimes 2}} + \mathcal{O}(1), \quad 0 \leq j \leq n,
    \end{equation*}
    where \( \mathcal{O}(1) \) denotes a quantity that is uniformly bounded with respect to \( N \).
    The coefficient \( \frac{1}{2} \braket{\varphi^{\otimes 2}}{v(x_1-x_2) \varphi^{\otimes 2}} \) is exactly the coefficient \( \mu_{\varphi(t)} \) defined in \eqref{eqn: mu in hartree equation}.
    Since \( \varphi \) is solution to the Hartree equation \eqref{eqn: hartree equation}, we can write
    \begin{equation*}
        \mathbf{H}_{j,j} = N \braket{\varphi}{\ii\dot{\varphi}} + \mathcal{O}(1).
    \end{equation*}
    Therefore, the diagonal elements of the matrix \( \check{\mathbf{H}} \) are all uniformly bounded with respect to \( N \).
    For the off-diagonal terms, we have (see again Appendix \ref{appendix: components bfH}):
    \begin{equation*}
        \conj{\mathbf{H}_{j+1, j}} = \mathbf{H}_{j,j+1}
        = \frac{\sqrt{(N-2j)(N-2j-1)}}{2(N-1)} \sqrt{(2j+2)(2j+1)} \braket{\varphi^{\otimes 2} \otimes \chi_{2j}}{v(x_1-x_2) \chi_{2j+2}},\quad 0 \leq j \leq n-1,
    \end{equation*}
    which are also uniformly bounded with respect to \( N \).
    The smoothness of the matrix \( \check{\mathbf{H}} \) with respect to time is simply due to the fact that it depends only on \( \varphi \) and on the excitation functions \( \chi_{2j} \), which are smooth with respect to time by Theorem \ref{theorem: setting n>1 no tensorization - chi2dot uniformly bounded wrt N}.
\end{proof}

The value of the coefficient \( \mu_{\varphi(t)} \) given by \eqref{eqn: mu in hartree equation} can now be explained: if any other value for \( \mu \) were to be chosen in the Hartree equation \eqref{eqn: hartree equation}, the diagonal elements of the matrix \( \check{\mathbf{H}} \) would be of the form \( \check{\mathbf{H}}_{j,j} = \frac{N}{2} \braket{\varphi^{\otimes 2}}{v(x_1-x_2)\varphi^{\otimes 2}} - N \mu + \mathcal{O}(1) \).
This would mean that the coefficient vector \( c \) is highly oscillatory, with a phase proportional to \( N \).
By choosing \( \mu_{\varphi(t)} \) as given by \eqref{eqn: mu in hartree equation}, we eliminate the highly oscillatory behaviour.

Theorems \ref{theorem: setting n>1 no tensorization - chi2dot uniformly bounded wrt N} and \ref{theorem: unitary evolution coeffs} show that the equations of motion \eqref{eqn: equations of motion} have bounded right-hand sides as \( N \) increases.
However, the boundedness of the terms in the equations of motion with respect to \( N \) is not the only property of interest: we also want to be able to actually compute the right-hand side of the equations of motion \eqref{eqn: equations of motion}.
This is the content of the next section, where we show that it can be done with a computational complexity that is uniformly bounded with respect to \( N \).

\section{Computational complexity of the terms in the equations of motion}
\label{section: computational complexity equations of motion}

We will prove the following theorem, loosely stated on purpose:
\begin{theorem}
    \label{theorem: goal O(1) complexity}
    The computational cost of evaluating the terms on the right-hand side of the equations of motion \eqref{eqn: equations of motion} is uniformly bounded with respect to the number \( N \) of particles.
\end{theorem}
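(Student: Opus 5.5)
The plan is to show that every term on the right-hand side of \eqref{eqn: equations of motion} can be rewritten as a finite combination, with an $N$-independent number of summands and explicit scalar prefactors depending on $N$, of brakets involving only $\mathcal{O}(1)$ variables. The structure follows the proof of Theorem \ref{theorem: setting n>1 no tensorization - chi2dot uniformly bounded wrt N}. We treat the two equations separately: the coefficient equation, i.e.\ the entries of $\check{\mathbf{H}}$, and the excitation equation, i.e.\ the projected partial brakets.

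\emph{Reducing the Hamiltonian.} First, we apply Lemmas \ref{lemma: reducing number of effective kinetic operators} and \ref{lemma: reducing number of effective potential operators} with $w=u$, taking $m=2k$, $p=0$ for the $\chi$-equation and $m=0$, $p=k$ for the $c$-equation. This replaces the $\mathcal{O}(N^2)$ one- and two-body operators in $H$ by at most $m(m-1)/2+2m+3+m+2$ effective operators, each multiplied by an explicit combinatorial factor. The number of operators is bounded independently of $N$.

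\emph{Reducing the permutations.} Second, we expand $u=\sum_j c_{2j}\varphi^{\otimes N-2j}\otimes_s\chi_{2j}$ and write each symmetric tensor product as its $(N!)$-term sum \eqref{eqn: definition symmetric tensor product}. For a fixed effective operator acting on at most two of the integrated variables, the orthogonality \eqref{eqn: bogoliubov orthogonality relation} together with $\norm{\varphi}{}=1$ kills every permutation unless it places the variables of $\chi_{2j}$ compatibly with those of $\chi_{2k}$ or $\varphi$. The only exceptions are the at most two variables touched by the operator. Consequently only $j\in\{k-1,k,k+1\}$ contribute. The surviving permutations fall into $\mathcal{O}(1)$ equivalence classes, determined by where the at most $2k+2$ "active" variables are sent. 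Within a class all summands are equal, so each class contributes a single braket multiplied by a counting factor; this factor is a ratio of factorials that simplifies to a rational function of $N$, as in the formulas for $\mathbf{H}_{j,j}$ and $\mathbf{H}_{j,j+1}$ in the proof of Theorem \ref{theorem: unitary evolution coeffs}. Every $\varphi$-factor carried along an untouched variable integrates to $1$ in the sense of the $\mathbf{1}_y$ notation. What remains is an integral over at most $2k+2$ variables, of objects such as $v(x_1-x_2)\chi_{2k+2}$, $\Delta_{x_1}\chi_{2k}$, and $v\ast|\varphi|^2$ or $v\ast(\conj{\varphi}\chi)$ partial convolutions.

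\emph{The projection and the tensorized excitations.} The projection $\proj_{\mathcal{N}^{(2k)}_\varphi(u)}$ is then realized explicitly as $(I-P_\varphi)^{\otimes 2k}$ restricted to symmetric functions, followed by removal of the $\chi_{2k}$-component. This requires only one-particle projections and a single braket with $\chi_{2k}$, so its cost is independent of $N$. The terms $\braket{\varphi}{\dot\varphi}$ and $\varphi\otimes_s\braket{\dot\varphi}{\chi_{2k}}_y$ are one- or two-particle quantities, with $\dot\varphi$ computed from \eqref{eqn: hartree equation}. For the variational crime, $\chi_{2j}$ with $j\ge2$ enters only through brakets with at most two $\varphi$'s and $\chi_{2k}$, and \eqref{eqn: Bogoliubov tensorization} expresses these as sums over $P_{2j}$. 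That is a number of terms independent of $N$, since $n$ is fixed.

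\emph{Expected obstacle.} The main obstacle is the bookkeeping in the second step: one must verify carefully that the combinatorial class counts are correct for all combinations of $(j,k)$ and operator placement, and in particular that no class grows with $N$ other than through the explicit prefactor. I would handle this by enumerating placements of the active variables, following the case split used in the proof of Lemma \ref{lemma: reducing number of effective potential operators}.
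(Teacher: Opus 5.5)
Your proposal is correct and is essentially the paper's argument. You use the operator-reduction Lemmas \ref{lemma: reducing number of effective kinetic operators} and \ref{lemma: reducing number of effective potential operators}, then the orthogonality \eqref{eqn: bogoliubov orthogonality relation} and $\norm{\varphi}{2}=1$ to restrict the permutation sum to placements of the $\chi_{2q}$-variables inside the bounded active set $\mathcal{T}^C$, and you evaluate brakets only over the variables in $\mathcal{B}$. Your equivalence classes are exactly the sorted valid permutations of Lemma \ref{lemma: sum over tau in Sjn with no braket along phi}, each with weight $\binom{N}{2q}^{-1/2}$.

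One small correction: for the $\chi_{2k}$-equation ($m=2k$, $p=0$), orthogonality alone only forces the unprojected braket to vanish for $2j>2k+2$. The cutoff $j\geq k-1$ therefore comes from $\proj_{\!\mathcal{N}^{(2k)}_\varphi(u)}$, not from orthogonality. This does not affect the conclusion, since $j\leq n$ is bounded anyway.
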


Part of the proof has already been shown in Lemmas \ref{lemma: reducing number of effective kinetic operators} and \ref{lemma: reducing number of effective potential operators}, where we used symmetries to reduce the number of operators that we have to deal with.
The second part of the proof of Theorem \ref{theorem: goal O(1) complexity} consists of reducing the number of permutations involved in the symmetric tensor products as well as reducing the dimensionality of all brakets.
This is detailed in Section \ref{section: reducing the number of permutations and variables in braket}.

\subsection{Reduction of terms by a pigeonhole argument}
We start with a result that allows us to skip the computation of some brakets using a simple argument.
\begin{lemma}
    \label{lemma: when does braket varphiN-m-p otimes chip vs A varphiN-j otimes chiq vanishes depending on p q s m}
    Let \( m \in \intinterval{0}{N} \), \( p, q \in \intinterval{0}{n} \), \( \tau\in S_N \), and \( A_{\ell_1, \ell_2} \) a two-body operator acting on variables \( \ell_1, \ell_2 \).
    Let \( \varphi \in L^2(\Omega) \), \( \chi_{2p} \in L^2_s(\Omega^{2p}) \) and \( \chi_{2q} \in L^2_s(\Omega^{2q}) \), such that \( \norm{\varphi}{2} = 1 \) and the orthogonality \eqref{eqn: bogoliubov orthogonality relation} is satisfied.
    Then the braket
    \begin{equation*}
        \braket{\varphi^{\otimes N-m-2p} \otimes \chi_{2p}}{A_{\ell_1, \ell_2} \varphi^{\otimes N-2q}(x_{\tau(\intinterval{2q+1}{N})})\chi_{2q}(x_{\tau(\intinterval{1}{2q})})}_{x_{\intinterval{m+1}{N}}}
    \end{equation*}
    vanishes when
    \begin{equation}
        2p > 2q+2 \quad \text{or}\quad 2q > 2p+m+2.
    \end{equation}
\end{lemma}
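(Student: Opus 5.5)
The plan is a counting (pigeonhole) argument on the variables $x_{m+1},\dots,x_N$ that are integrated out. In the braket, the left function places $\varphi$ on the variables $x_{m+1},\dots,x_{N-2p}$ and $\chi_{2p}$ on the last $2p$ variables $x_{N-2p+1},\dots,x_N$. The right function places $\chi_{2q}$ on the $2q$ variables $x_{\tau(1)},\dots,x_{\tau(2q)}$ and $\varphi$ on all remaining variables. The operator $A_{\ell_1,\ell_2}$ acts only on $x_{\ell_1},x_{\ell_2}$. So every integrated variable that $A$ does not touch enters the integrand through a pure product of two single-variable factors in that variable (one from each side), by Fubini. Vanishing will follow once we find one integrated variable, not touched by $A$, where this pairing is ``$\varphi$ against a marginal of $\chi$''. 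Such a pairing vanishes by the orthogonality \eqref{eqn: bogoliubov orthogonality relation}, applied with the symmetry of $\chi_{2p}$ or $\chi_{2q}$ so that the relevant variable may be taken as the first argument.

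First case, $2p>2q+2$. Look at the $2p$ variables $x_{N-2p+1},\dots,x_N$, which carry $\chi_{2p}$ on the left; all of them are integrated, since $N-2p+1\ge m+1$. On the right, at most $2q$ of them can lie in $\tau(\intinterval{1}{2q})$, and at most $2$ of them can be acted on by $A$. Since $2p>2q+2$, some variable $x_r$ with $r\ge N-2p+1$ lies outside both sets. On the right, $x_r$ is an argument of a $\varphi$ factor and is untouched by $A$. Integrating in $x_r$ first therefore produces $\braket{\chi_{2p}(\ldots,x_r,\ldots)}{\varphi(x_r)}_{x_r}$, which is the complex conjugate of a marginal $\braket{\varphi(y)}{\chi_{2p}(y,\cdot)}_y$ and hence vanishes. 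The rest of the integrand factors out, so the whole braket is zero.

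Second case, $2q>2p+m+2$. Now count the variables $x_{\tau(1)},\dots,x_{\tau(2q)}$ carrying $\chi_{2q}$ on the right. At most $m$ of them are free (not integrated), at most $2p$ of them carry $\chi_{2p}$ on the left, and at most $2$ are touched by $A$. So some $x_r$ with $r\in\tau(\intinterval{1}{2q})$ satisfies $m+1\le r\le N-2p$ and $r\notin\{\ell_1,\ell_2\}$. On the left this variable carries a $\varphi$ factor. Integrating it first gives $\braket{\varphi(x_r)}{\chi_{2q}(\ldots,x_r,\ldots)}_{x_r}=0$, again by symmetry of $\chi_{2q}$ and \eqref{eqn: bogoliubov orthogonality relation}.

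The only delicate point is justifying that we may integrate $x_r$ first and that it decouples. This holds because $A_{\ell_1,\ell_2}$, taken as a two-body multiplication or differential operator, acts as the identity in $x_r$; all other factors are independent of $x_r$ except the two paired ones. Fubini, under the $L^2$ integrability already implicit in the braket being defined, then gives the factorization. I expect this to be routine. The main care needed is in the bookkeeping of which index sets can overlap, in particular allowing $\ell_1,\ell_2$ to lie among the free indices $1,\dots,m$, which only makes the counts easier.
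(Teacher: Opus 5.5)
Your proof is correct and uses the same pigeonhole argument as the paper. The paper lists the situations in which a one-variable braket of the form \( \braket{\varphi(y)}{\chi(y,\cdot)}_y \) is forced, and arrives at the same two conditions \( 2p>2q+2 \) and \( 2q>2p+m+2 \). Your version is somewhat more careful: it counts explicitly over the index sets \( \intinterval{N-2p+1}{N} \) and \( \tau(\intinterval{1}{2q}) \), allows \( \ell_1,\ell_2 \) to lie among the free indices, and states the Fubini factorization explicitly.
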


These two conditions are only sufficient conditions, they are not necessary.

\begin{proof}
    We start with a simple observation: the braket over \( N-m \) variables vanishes as soon as the braket over one of the variables is of the form \( \braket{\varphi(y)}{\chi_{2p}(y, \cdot)} \) or \( \braket{\varphi(y)}{\chi_{2q}(y, \cdot)} \), since those one-dimensional brakets vanish by \eqref{eqn: bogoliubov orthogonality relation}.
    Such brakets necessarily occur in the following situations:
    \begin{enumerate}
        \item one of the functions \( \varphi \) on the left is paired with a marginal of \( \chi_{2q} \) on the right.
        It means we couldn't pair all functions \( \varphi \) from the left argument of the braket to those on the right argument or to marginals of \( \chi_{2q} \) upon which \( A \) is acting.
        This situation occurs when \( N-m-2p > N-2q+2 \iff 2q > 2p+m+2 \).

        \item one of the marginals of \( \chi_{2p} \) from the left argument of the braket is paired with a function \( \varphi \) from the right argument.
        It means we couldn't pair all marginals of \( \chi_{2p} \) on the left to marginals of \( \chi_{2q} \) on the right or to functions \( \varphi \) on which the operator acts.
        This situation occurs when \( 2p > 2q+2 \).
        
        \item a function \( \varphi \) from the right argument on the braket is paired with a marginal of \( \chi_{2p} \) from the left argument.
        It means we couldn't pair all functions \( \varphi \) from the right to those from the left argument or to marginals of \( \chi_{2p} \) where \( A \) is acting, or to put them outside the braket.
        This situation occurs when \( N-2q > N-m-2p + m + 2 \iff 2p > 2q+2 \).

        \item one of the marginals of \( \chi_{2q} \) from the right argument of the braket is paired with a function \( \varphi \) from the left argument. It means we couldn't pair all marginals of \( \chi_{2q} \) on the right to marginals of \( \chi_{2p} \) on the left or to functions \( \varphi \) on which the operator acts, or to put them outside the braket.
        This situation occurs when \( 2q > 2p+m+2 \).
    \end{enumerate}
    When any of these conditions is met, the braket vanishes by \eqref{eqn: bogoliubov orthogonality relation}.
\end{proof}

\subsection{Reducing the number of operators}
\label{section: reducing number of operators}

The content of this section has already been stated in Lemmas \ref{lemma: reducing number of effective kinetic operators} and \ref{lemma: reducing number of effective potential operators}.
We write the very short current subsection to emphasize that those lemmas are not just a way to make the proof of Theorem \ref{theorem: setting n>1 no tensorization - chi2dot uniformly bounded wrt N} shorter, but are essential when trying to compute the right-hand side of the equations of motion with a computational complexity that is uniformly bounded with respect to \( N \).
For example, Lemma \ref{lemma: reducing number of effective kinetic operators} states that the \( N \) one-body operators involved in the kinetic operator can be reduced to:
\begin{itemize}
    \item only 2 one-body operators with appropriate scalings when computing the equation of motion for the vector of coefficients \( \mathbf{c} \).
    \item only 3 one-body operators with appropriate scalings when computing the equation of motion for the excitation function \( \chi_2 \) (because in this situation \( m=2 \) and \( p=0 \) in Lemma \ref{lemma: reducing number of effective kinetic operators}, and the braket multiplied by \( 2p \) can be omitted).
\end{itemize}
Similarly, Lemma \ref{lemma: reducing number of effective potential operators} states that the \( N(N-1)/2 \) two-body operators involved in the potential operator can be reduced to:
\begin{itemize}
    \item only 3 two-body operators with appropriate scalings when computing the equation of motion for the vector of coefficients \( \mathbf{c} \).
    \item only 4 two-body operators with appropriate scalings when computing the equation of motion for the excitation function \( \chi_2 \) (because in this situation \( m=2 \) and \( p=0 \) in Lemma \ref{lemma: reducing number of effective potential operators}, and there are \( m+2 \) operators multiplied by \( 2p \) which can then be omitted).
\end{itemize}

\subsection{Reducing the number of permutations and variables in the braket}
\label{section: reducing the number of permutations and variables in braket}

The objective of this section is two-fold: first we reduce the number of permutations that are involved in the symmetric tensor products in the definition of \( u \), and second we also reduce the number of variables along which to compute the braket.
The first part is due to many permutations leading to a multi-particle braket involving a one-particle braket of the form \( \braket{\varphi(y)}{\chi_{2j}(y, \cdot)}_y = 0 \).
The second part is due to many variables in the multi-particle braket along which the one-particle braket is of the form \( \braket{\varphi}{\varphi}=1 \).
Because those equalities are known beforehand, we can skip all permutations that yield the former and skip the one-particle brakets along variables leading to the latter.

Due to Lemmas \ref{lemma: reducing number of effective kinetic operators} and \ref{lemma: reducing number of effective potential operators} and the equations of motion \eqref{eqn: equations of motion}, we only need to deal with brakets of the form
\begin{equation}
    \label{eqn: the two types of brakets we have to deal with in reducing nb of permutations and variables}
    \braket{\varphi^{\otimes N-m-2p} \otimes \chi_{2p}}{-\Delta_{x_k} u}_{x_{\intinterval{m+1}{N}}}
    \quad\text{or}\quad
    \braket{\varphi^{\otimes N-m-2p} \otimes \chi_{2p}}{V_{\ell_1, \ell_2} u}_{x_{\intinterval{m+1}{N}}},
\end{equation}
where \( m\in \{0,2\} \), \( p\in \intinterval{0}{n} \), and where the integers \( 1 \leq k,\ell_1,\ell_2 \leq N \) are those involved in Lemmas \ref{lemma: reducing number of effective kinetic operators} and \ref{lemma: reducing number of effective potential operators}.
The function \( \varphi \in L^2(\Omega) \) is the Hartree condensate function, and \( \chi_{2p} \in L^2_s(\Omega^{2p}) \) and \( \chi_{2q} \in L^2_s(\Omega^{2q}) \) are excitation functions: this means in particular that they are normalized and satisfy the orthogonality \eqref{eqn: bogoliubov orthogonality relation}.
The tensorization \eqref{eqn: Bogoliubov tensorization}-\eqref{eqn: Bogoliubov tensorization normalized} is not required in this section.

We show now how to reduce the number of permutations and variables in the braket for the case of a general two-body operator \( V_{\ell_1, \ell_2} \) that acts on variables \( \ell_1 \) and \( \ell_2 \).
Its exact definition will not be used, but we have in mind the multiplicative operator given by \( (V_{\ell_1, \ell_2} f)(x) = v(x_{\ell_1} - x_{\ell_2}) f(x) \) for \( f\in L^2(\Omega^N) \).
By allowing a general two-body operator \( V_{\ell_1, \ell_2} \) that acts on variables \( \ell_1, \ell_2 \), we are also able to deal with a one-body operator by applying our results to the case \( \ell_1 = \ell_2 \).

We start with a preliminary reduction of the number of permutations:
\begin{lemma}
    \label{lemma: symmetric tensor product rewritten with only unique permutations}
    Let \( i,j \geq 1 \), and \( f \in L^2_s(\Omega^i) \), \( g\in L^2_s(\Omega^j) \).
    Then
    \begin{equation}
        \label{eqn: symmetric tensor product rewritten with only unique permutations}
        (f\otimes_s g)(x) = \binom{i+j}{j}^{-1/2} \sum_{\substack{\tau \in S^{j}_{i+j}\\\tau\,\mathrm{sorted}}} f(x \setminus x_\tau) g(x_{\tau}),
    \end{equation}
    where ``\( \tau \in S^j_{i+j} \) is sorted'' means that \( \tau \) is sorted according to a given total ordering.
\end{lemma}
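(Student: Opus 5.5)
Starting from the definition \eqref{eqn: definition symmetric tensor product}, I would write
\begin{equation*}
    (f\otimes_s g)(x) = \frac{1}{\sqrt{i!\,j!\,(i+j)!}} \sum_{\sigma\in S_{i+j}} f(x_{\sigma(\intinterval{1}{i})})\, g(x_{\sigma(\intinterval{i+1}{i+j})})
\end{equation*}
and group the permutations \( \sigma \) by the set of indices \( \{\sigma(i+1),\dots,\sigma(i+j)\} \) that are sent into the arguments of \( g \). For each \( j \)-element subset \( T \subset \intinterval{1}{i+j} \), let \( \tau \) be the unique sorted element of \( S^j_{i+j} \) listing \( T \) in the chosen total order. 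The permutations with \( \{\sigma(i+1),\dots,\sigma(i+j)\} = T \) are then exactly those obtained by composing an arbitrary ordering of \( T \) (giving \( j! \) choices) with an arbitrary ordering of the complement (giving \( i! \) choices).

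Next I would use the symmetry of \( f \in L^2_s(\Omega^i) \) and \( g \in L^2_s(\Omega^j) \). For every \( \sigma \) in the class of \( T \), we have \( g(x_{\sigma(\intinterval{i+1}{i+j})}) = g(x_\tau) \), and \( f(x_{\sigma(\intinterval{1}{i})}) = f(x\setminus x_\tau) \). Here \( x\setminus x_\tau \) lists the remaining variables in increasing index order, which is legitimate because \( f \) is symmetric. Hence each class contributes \( i!\,j!\, f(x\setminus x_\tau) g(x_\tau) \), and the classes are in bijection with the sorted \( \tau \), of which there are \( \binom{i+j}{j} \).

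It remains to collect the constants:
\begin{equation*}
    \frac{i!\,j!}{\sqrt{i!\,j!\,(i+j)!}} = \sqrt{\frac{i!\,j!}{(i+j)!}} = \binom{i+j}{j}^{-1/2},
\end{equation*}
which gives \eqref{eqn: symmetric tensor product rewritten with only unique permutations}.

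The only delicate point is the bookkeeping of the partition of \( S_{i+j} \) into classes. I would check that the map \( \sigma \mapsto (\text{sorted }\tau, \text{ordering of } T, \text{ordering of the complement}) \) is a bijection, which confirms that each class has exactly \( i!\,j! \) elements. I would also make sure that the notation \( x\setminus x_\tau \) is consistent with the paper's definition, which removes variables successively. Since \( f \) is symmetric, the order of the remaining variables is irrelevant, so this causes no problem.
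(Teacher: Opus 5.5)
Your proposal is correct and follows the paper's argument. The paper also splits each \( \sigma \in S_{i+j} \) into the ordered \( j \)-tuple fed to \( g \) and an ordering of the complement, uses the symmetry of \( f \) to extract \( i! \) and the symmetry of \( g \) to collapse orderings of the same set onto the sorted representative (extracting \( j! \)), and then simplifies \( i!\,j!/\mathsf{K}_i^{i+j} = \binom{i+j}{j}^{-1/2} \) exactly as you do; the only difference is that the paper performs the two reductions one after the other, whereas you count each class of size \( i!\,j! \) in a single step.
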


We recall that \( S^j_{i+j} \) is the set of permutations of \( j \) elements chosen among the set \( \intinterval{1}{i+j} \), see Section \ref{section: notations}.

\begin{proof}
    We have
    \begin{equation*}
        (f\otimes_s g)(x)
        = \frac{1}{\mathsf{K}_{i}^{i+j}} \sum_{\sigma\in S_{i+j}} f(x_{\sigma(\intinterval{1}{i})}) g(x_{\sigma(\intinterval{i+1}{i+j})}) 
        = \frac{1}{\mathsf{K}_{i}^{i+j}} \sum_{\sigma_1 \in S^{j}_{i+j}} \sum_{\sigma_2 \in S_{i+j\setminus \sigma_1}} f(x_{\sigma_2}) g(x_{\sigma_1}),
    \end{equation*}
    where we have decomposed \( \sigma\in S_{i+j} \) as \( \sigma = (\sigma_2, \sigma_1) \), with \( \sigma_1 \in S_{i+j}^j \) and \( \sigma_2 \in S_{i+j\setminus \sigma_1} \).
    By symmetry of the function \( f \) we have \( f(x_{\sigma_2}) = f(x\setminus x_{\sigma_1}) \), hence
    \begin{equation*}
        (f\otimes_s g)(x)
        = \frac{1}{\mathsf{K}_{i}^{i+j}} \sum_{\sigma_1 \in S^{j}_{i+j}} \sum_{\sigma_2 \in S_{i+j\setminus \sigma_1}} f(x \setminus x_{\sigma_1}) g(x_{\sigma_1})
        = \frac{i!}{\mathsf{K}_{i}^{i+j}} \sum_{\sigma_1 \in S^{j}_{i+j}} f(x \setminus x_{\sigma_1}) g(x_{\sigma_1}).
    \end{equation*}
    By symmetry of the function \( g \) we have \( g(x_{\sigma_1}) = g(x_{\tau}) \) for all \( \tau \in S_{\sigma_1} \).
    Moreover, for each \( \sigma_1 \in S^j_{i+j} \), there is a unique permutation \( \tau \) such that \( \tau \) is sorted according to the given total ordering and \( \tau(\intinterval{1}{j}) =_s \sigma(\intinterval{1}{j}) \).
    Iterating over all \( \sigma_1 \in S^j_{i+j} \) is equivalent to iterating over all sorted permutations \( \tau \), and for each \( \tau \), over all its permutations.
    Thus,
    \begin{equation*}
        (f\otimes_s g)(x)
        = \frac{i!}{\mathsf{K}_{i}^{i+j}} \sum_{\substack{\tau\in S^j_{i+j}\\\tau\,\mathrm{sorted}}} \sum_{\mu \in S_\tau} f(x \setminus x_{\tau}) g(x_{\tau})
        = \frac{i!j!}{\mathsf{K}_{i}^{i+j}} \sum_{\substack{\tau\in S^j_{i+j}\\\tau\,\mathrm{sorted}}} f(x \setminus x_{\tau}) g(x_{\tau}),
    \end{equation*}
    and we conclude using the definition of the coefficient \( \mathsf{K}^{i+j}_i \).
\end{proof}

Let us come back to the braket involving the two-body operator in \eqref{eqn: the two types of brakets we have to deal with in reducing nb of permutations and variables}.
Using Lemma \ref{lemma: symmetric tensor product rewritten with only unique permutations}, the symmetric tensor product in the summand of
\begin{equation*}
    \braket{\varphi^{\otimes N-m-2p} \otimes \chi_{2p}}{V_{\ell_1, \ell_2} u}_{x_{\intinterval{m+1}{N}}} 
    = \sum_{q=0}^n c_{2q}\braket{\varphi^{\otimes N-m-2p} \otimes \chi_{2p}}{V_{\ell_1, \ell_2} \varphi^{\otimes N-2q} \otimes_s \chi_{2q}}_{x_{\intinterval{m+1}{N}}}
\end{equation*}
can be replaced by a sum over all sorted permutations from the set \( S_N^{2q} \):
\begin{equation}
    \braket{\varphi^{\otimes N-m-2p} \otimes \chi_{2p}}{V_{\ell_1, \ell_2} u}_{x_{\intinterval{m+1}{N}}}
    = \sum_{q=0}^n \frac{c_{2q}}{\sqrt{\binom{N}{2q}}} \sum_{\substack{\tau \in S_N^{2q}\\\tau\,\mathrm{sorted}}} \braket{\varphi^{\otimes N-m-2p} \otimes \chi_{2p}}{V_{\ell_1, \ell_2} \varphi^{\otimes N-2q}(x \setminus x_\tau) \chi_{2q}(x_{\tau})}_{x_{\intinterval{m+1}{N}}}.
    \label{eqn: reducing nb operators - intermediate simplification}
\end{equation}
This is not good enough, since the number of permutations still depends on \( N \).
Using the orthonormality of the family \( \{\varphi, \eta_1, \dots, \eta_r\} \) it is possible to simplify further that sum over the permutations in order to get rid of the dependency with respect to \( N \).
Let us first define the following sets:
\begin{gather}
    \mathcal{T} = \intinterval{m+1}{N-2p}\setminus \{\ell_1, \ell_2\}, \qquad
    \mathcal{T}^C = \intinterval{1}{N} \setminus \mathcal{T}, \label{eqn: def calT}\\
    \mathcal{B} = (\{\ell_1, \ell_2\} \cup \intinterval{N-2p+1}{N}) \setminus \intinterval{1}{m}. \label{eqn: def calB}
\end{gather}
where the indices \( \ell_1, \ell_2 \) are those involved in \eqref{eqn: the two types of brakets we have to deal with in reducing nb of permutations and variables}.
For the sake of readability we omit in \( \mathcal{T} \) and \( \mathcal{B} \) their explicit dependence on \( m \), \( p \), \( \ell_1 \) and \( \ell_2 \).
The set \( \mathcal{T} \) is the set of variables that correspond to a function \( \varphi \) on the left argument of the braket \eqref{eqn: the two types of brakets we have to deal with in reducing nb of permutations and variables}, omitting those acted on by the operator \( V_{\ell_1, \ell_2} \).
The set \( \mathcal{T}^C \) is the complement of \( \mathcal{T} \) in \( \intinterval{1}{N} \).
The set \( \mathcal{B} \) is the set of marginals from the left argument of the braket that are not orthogonal to marginals of \( \chi_{2q} \).
As we will see, \( \mathcal{T} \) is the set of variables that can be ``skipped'' during the computation of the braket, and \( \mathcal{B} \) is the set of variables that actually determine the value of the braket.

We are now able to express in a concise manner the reduction of the number of permutations that need to be considered, as well as the reduction of the number of variables along which the braket actually needs to be computed.

\begin{definition}
    \label{def: valid permutation tau}
    For given \( N, p, q, \ell_1, \ell_2 \), a permutation \( \tau \in S^{2q}_{\mathcal{T}^C} \) is said to be \emph{valid} if \( \intinterval{N-2p+1}{N} \subset \{\ell_1, \ell_2\}\cup \tau \).
    In other words, a permutation \( \tau \in S^{2q}_{\mathcal{T}^C} \) is valid if the braket \( \braket{\varphi^{\otimes N-m-2p} \otimes \chi_{2p}}{V_{\ell_1, \ell_2} \varphi^{\otimes N-2q}(x \setminus x_\tau) \chi_{2q}(x_{\tau})}_{x_{\intinterval{m+1}{N}}} \) doesn't involve any one-dimensional braket of the form \( \braket{\chi_{2p}(y, \cdot)}{\varphi(y)}_y \).
\end{definition}

\begin{lemma}
    \label{lemma: sum over tau in Sjn with no braket along phi}
    Let integers \( m \in \intinterval{0}{N} \), \( p, q \in \intinterval{0}{n} \).
    Let functions \( \varphi \in L^2(\Omega) \), \( \chi_{2p} \in L^2_s(\Omega^{2p}) \) and \( \chi_{2q} \in L^2_s(\Omega^{2q}) \), such that \( \norm{\varphi}{2} = 1 \) and the orthogonality \eqref{eqn: bogoliubov orthogonality relation} is satisfied.
    For any \( \ell_1, \ell_2 \in \intinterval{1}{N} \),
    \begin{multline}
        \label{eqn: sum over tau in Sjn with no braket along phi}
        \sum_{\substack{\tau\in S^{2q}_N\\\tau\,\mathrm{sorted}}} \braket{\varphi^{\otimes N-m-2p} \otimes \chi_{2p}}{V_{\ell_1, \ell_2}\varphi^{\otimes N-2q}(x\setminus x_\tau) \chi_{2q}(x_\tau)}_{x_{\intinterval{m+1}{N}}} \\
        = \sum_{\substack{\tau\in S^{2q}_{\mathcal{T}^C}\\\tau\,\mathrm{sorted,\,valid}}} \braket{\varphi^{\otimes N-m-2p} \otimes \chi_{2p} }{V_{\ell_1, \ell_2} \varphi^{\otimes N-2q}(x\setminus x_\tau) \chi_{2q}(x_\tau)}_{\substack{x_{\mathcal{B}} \\ \mathbf{1}_{\mathcal{T}}}}.
    \end{multline}
\end{lemma}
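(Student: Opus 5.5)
Our plan is to start from the left-hand side of \eqref{eqn: sum over tau in Sjn with no braket along phi} and discard, term by term, those sorted $\tau \in S^{2q}_N$ that give a zero contribution. We then simplify the surviving brakets along the variables that contribute only a factor one.

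For a fixed sorted $\tau$, we write the multi-variable braket over $x_{\intinterval{m+1}{N}}$ as an iterated integral. Whenever a variable $x_i$ with $i \in \intinterval{m+1}{N}$ is not acted on by $V_{\ell_1,\ell_2}$, the integrand factorizes in $x_i$. This yields a one-particle braket of one of four kinds:
\begin{itemize}
\item $\braket{\varphi}{\varphi} = 1$;
\item $\braket{\varphi(y)}{\chi_{2q}(y,\cdot)}_y = 0$;
\item $\braket{\chi_{2p}(y,\cdot)}{\varphi(y)}_y = 0$;
\item a genuine pairing of a $\chi_{2p}$-marginal with a $\chi_{2q}$-marginal.
\end{itemize}
The second and third kinds vanish by the orthogonality \eqref{eqn: bogoliubov orthogonality relation}.

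The first reduction concerns $\tau(j) \in \mathcal{T}$. In that case the variable $x_{\tau(j)}$ carries $\varphi$ on the left, lies in the integration range, and is untouched by $V_{\ell_1,\ell_2}$. Integrating it out produces $\braket{\varphi(y)}{\chi_{2q}(y,\cdot)}_y = 0$, so the whole term vanishes. Hence only $\tau$ with image in $\mathcal{T}^C$ survive. Since being sorted is a property of the image set, the sorted elements of $S^{2q}_N$ with image in $\mathcal{T}^C$ are exactly the sorted elements of $S^{2q}_{\mathcal{T}^C}$.

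The second reduction concerns an index $i \in \intinterval{N-2p+1}{N}$ that is neither in $\tau$ nor in $\{\ell_1,\ell_2\}$. Such an $i$ lies in the integration range when $N-2p+1 > m$, which is the relevant regime. Otherwise the term should be checked to be consistent with the definition of $\mathcal{B}$. The right factor in $x_i$ is $\varphi$, the left factor is a marginal of $\chi_{2p}$, and $V$ does not couple $x_i$. Integrating $x_i$ gives $\braket{\chi_{2p}(y,\cdot)}{\varphi(y)}_y = 0$. So only valid $\tau$ in the sense of Definition~\ref{def: valid permutation tau} remain.

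For a surviving sorted valid $\tau$, every $i \in \mathcal{T}$ has $\varphi$ on both sides, is not in $\tau$, and is not acted on by $V$. Fubini together with $\norm{\varphi}{2}=1$ therefore gives a factor $1$ for each such $i$. The remaining integration variables are $\intinterval{m+1}{N} \setminus \mathcal{T}$. By \eqref{eqn: def calT}--\eqref{eqn: def calB} this set equals $\mathcal{B}$: it consists of $\{\ell_1,\ell_2\}\cap\intinterval{m+1}{N-2p}$ together with $\intinterval{N-2p+1}{N}$, which is $(\{\ell_1,\ell_2\}\cup\intinterval{N-2p+1}{N})\setminus\intinterval{1}{m}$. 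This is exactly the braket with subscript $x_{\mathcal{B}}, \mathbf{1}_{\mathcal{T}}$.

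The main obstacle is bookkeeping rather than analysis. We must justify that factorization is legitimate: $V_{\ell_1,\ell_2}$ only couples $x_{\ell_1}, x_{\ell_2}$, and the integrability of the products follows from $v$ bounded and all functions in $L^2$. We must also handle the edge cases carefully:
\begin{itemize}
\item $\ell_1$ or $\ell_2$ lying in $\intinterval{1}{m}$ or in the $\chi_{2p}$ block;
\item the case $\ell_1=\ell_2$;
\item overlaps between $\intinterval{1}{m}$ and $\intinterval{N-2p+1}{N}$.
\end{itemize}
We would treat these by a short case check, confirming in each case that the set identity $\intinterval{m+1}{N}\setminus\mathcal{T}=\mathcal{B}$ and the vanishing arguments still hold.
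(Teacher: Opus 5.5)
Your proposal is correct and is essentially the paper's own proof. You discard the sorted $\tau$ whose image meets $\mathcal{T}$ (via $\braket{\varphi(y)}{\chi_{2q}(y,\cdot)}_y=0$) and those that are not valid (via $\braket{\chi_{2p}(y,\cdot)}{\varphi(y)}_y=0$), then integrate out the $\mathcal{T}$-variables as factors $\braket{\varphi}{\varphi}=1$ and identify $\intinterval{m+1}{N}\setminus\mathcal{T}=\mathcal{B}$. The caveat you leave open is vacuous: $N-2p+1\le m$ cannot occur because $\varphi^{\otimes N-m-2p}$ forces $N-m-2p\ge 0$, and the cases $\ell_1=\ell_2$ or $\ell_i\in\intinterval{1}{m}$ are already covered by your general vanishing and factorization arguments.
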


\begin{proof}
    The proof is divided into two steps: the first one consists in showing that the set \( \{\tau \in S^{2q}_N : \tau\,\mathrm{sorted} \} \) over which the sum on the left-hand side is performed can be replaced by \( \{\tau \in S^{2q}_{\mathcal{T}^C} : \tau\,\mathrm{sorted,\,valid}\}\).
    The second step consists in showing that the brakets along variables indexed by \( \mathcal{T} = \intinterval{m+1}{N-2p} \setminus \{\ell_1, \ell_2\} \) are all equal to one.

    The first step relies on the fact that there are many permutations \( \tau \) such that the summand is necessarily zero due to the orthogonality \( \braket{\varphi(y)}{\chi_{2q}(y, \cdot)}_y = 0 \), and we can thus get rid of them.
    We are computing
    \begin{equation}
        \label{eqn: proof simplification sum over tau sorted - before doing anything}
        \sum_{\substack{\tau\in S^{2q}_N\\\tau\,\mathrm{sorted}}} \braket{\varphi^{\otimes N-m-2p} \otimes \chi_{2p}}{V_{\ell_1, \ell_2}\varphi^{\otimes N-2q}(x\setminus x_\tau) \chi_{2q}(x_\tau)}_{x_{\intinterval{m+1}{N}}},
    \end{equation}
    and the set \( \mathcal{T} \) is defined as the set of variables such that the left argument of the braket along each of these variables is \( \varphi \) and such that the operator \( V_{\ell_1, \ell_2} \) is not acting on them.
    If the permutation \( \tau \in S^{2q}_N \) is such that there is an index \( k \in \intinterval{1}{2q} \) for which \( \tau(k) \in \mathcal{T} \), then the braket along variable \( x_{\tau(k)} \) is \( \braket{\varphi(y)}{\chi_{2q}(y, \cdot}_y = 0 \) and the summand of \eqref{eqn: proof simplification sum over tau sorted - before doing anything} for that permutation vanishes.
    Therefore, the permutations \( \tau \in S^{2q}_N \) that don't necessarily give a vanishing summand are those for which \( \tau(\intinterval{1}{2q}) \subset \mathcal{T}^C := \intinterval{1}{m} \cup \{\ell_1, \ell_2\} \cup \intinterval{N-2p+1}{N} \), i.e. all \( \tau \in S^{2q}_{\mathcal{T}^C} \).
    We can argue similarly for brakets of the form \( \braket{\chi_{2p}(y, \cdot)}{\varphi(y)}_y = 0 \): they occur if a variable with index in \( \intinterval{N-2p+1}{N} \setminus \{\ell_1, \ell_2 \} \) corresponds to a function \( \varphi \) in the right argument of the braket, and this happens if \( \intinterval{N-2p+1}{N} \not\subset \tau \cup \{\ell_1, \ell_2\} \).
    Finally, the permutations we need to consider are the \emph{valid} (according to Definition \ref{def: valid permutation tau}) sorted permutations \( \tau\in S^{2q}_{\mathcal{T}^C} \).

    The second step of the proof is much shorter: for any such valid and sorted permutation \( \tau \), all the variables indexed by \( \mathcal{T} \) in \eqref{eqn: proof simplification sum over tau sorted - before doing anything} correspond to a braket \( \braket{\varphi}{\varphi} = 1 \).
    In other words, the multi-dimensional braket can be taken only on the variables \( \intinterval{m+1}{N} \setminus \mathcal{T} = (\{\ell_1, \ell_2\} \cup \intinterval{N-2p+1}{N}) \setminus \intinterval{1}{m} \), which is exactly the set \( \mathcal{B} \) defined in \eqref{eqn: def calB}.
\end{proof}

Finally, applying Lemma \ref{lemma: sum over tau in Sjn with no braket along phi} to \eqref{eqn: reducing nb operators - intermediate simplification} yields:
\begin{equation}
        \braket{\varphi^{\otimes N-m-2p} \otimes \chi_{2p}}{V_{\ell_1, \ell_2} u}_{x_{\intinterval{m+1}{N}}} 
        = \sum_{q=0}^n \frac{c_{2q}}{\sqrt{\binom{N}{2q}}} \sum_{\substack{\tau \in S_{\mathcal{T}^C}^{2q}\\\tau\,\mathrm{sorted,\, valid}}} \braket{\varphi^{\otimes N-m-2p} \otimes \chi_{2p} }{V_{\ell_1, \ell_2} \varphi^{\otimes N-2q}(x\setminus x_\tau) \chi_{2q}(x_\tau)}_{\substack{x_{\mathcal{B}}\\\mathbf{1}_{\mathcal{T}}}}.
    \label{eqn: braket after reducing nb of permutations and variables - potential}
\end{equation}
The number of permutations required to compute the right-hand side of \eqref{eqn: braket after reducing nb of permutations and variables - potential} is at most
\begin{equation*}
    \left| \left\{\tau \in S_{\mathcal{T}^C}^{2q} : \tau\,\mathrm{sorted,\,valid}\right\} \right|
    \leq \left| \left\{\tau \in S_{\mathcal{T}^C}^{2q} : \tau\,\mathrm{sorted}\right\} \right|
    = \frac{1}{(2q)!} \frac{\abs{\mathcal{T}^C}!}{(\abs{\mathcal{T}^C}-2q)!} 
    = \frac{1}{(2q)!} \frac{(m+2+2p)!}{(m+2+2p-2q)!}
\end{equation*}
permutations, which is a number that is indeed independent of \( N \) and more importantly manageable in practice.
Note that the denominator is always the factorial of a nonnegative number by Lemma \ref{lemma: when does braket varphiN-m-p otimes chip vs A varphiN-j otimes chiq vanishes depending on p q s m}, since the braket is known to vanish for \( m+2+2p-2q < 0 \) and this situation can thus be skipped.
The computation of the summand of \eqref{eqn: braket after reducing nb of permutations and variables - potential} only needs to be performed over \( \abs{\mathcal{B}} \) variables, i.e. at-most \( 2p+2 \) variables instead of \( N-m \).
We have thus shown Theorem \ref{theorem: goal O(1) complexity}, and at the same time paved the way for a numerically-viable method in the limit \( N\to \infty \).

\section{Numerical method}
\label{section: numerical method}

Let us quickly summarize what we have done so far.
We have shown in Section \ref{section: equations of motion} that it is possible to update the approximation \( u \) given by \eqref{eqn: final ansatz u} via the equations of motion \eqref{eqn: equations of motion} for the coefficients \( c_{2k} \) and the excitation functions \( \chi_{2k} \), and where the condensate function \( \varphi \) is solution to the Hartree equation \eqref{eqn: hartree equation}.
The derivation of the equations of motion \eqref{eqn: equations of motion} assumed that the excitation functions were independent of each other: otherwise, when the tensorization \eqref{eqn: Bogoliubov tensorization}-\eqref{eqn: Bogoliubov tensorization normalized} is assumed according to the Bogoliubov theory,  this causes the difficulties outlined in Section \ref{section: DFVP and tensorization}.
We have also shown in Section \ref{section: equations of motion - study of RHS} that the equations of motion \eqref{eqn: equations of motion} are well-behaved with right-hand sides that are uniformly bounded for all \( N \).
In Section \ref{section: computational complexity equations of motion} we have shown by combinatorial arguments that the computation of the right-hand side of the equations of motion can be performed with a computational effort independent of \( N \).
Hence, we have eliminated the computational bottleneck associated with \( N\to\infty \), and we can now devise a numerical method for the time-evolution of the approximate solution \( u \) to the Schrödinger equation.
To avoid the time integration of the high-dimensional equations of motion for \( \chi_{2k} \), we propose in Section \ref{section: integration eqns of motion n>1} a numerical scheme based on the time-discretization presented in Section \ref{section: time and space discretizations} and on the tensorization \eqref{eqn: Bogoliubov tensorization}-\eqref{eqn: Bogoliubov tensorization normalized}.

In order to obtain a numerical method that can be used in practice, we have to approximate the continuous problem in several ways but most of the approximations we perform are not new.
The novelty of this section lies mostly in how those approximations are assembled into a numerical method rather than the approximations themselves -- with the exception of Section \ref{section: integration eqns of motion n>1} related to the case \( n>1 \).

\subsection{Time and space discretizations}
\label{section: time and space discretizations}

To compute an approximate solution \( u \) of the form \eqref{eqn: final ansatz u}
to the Schrödinger equation \eqref{eqn: general schrodinger equation}, we need discretizations in time and space.

For the time discretization, we consider a final time \( t_{\text{final}} > 0 \) and a time step \( \Delta t > 0 \).
We want to obtain approximations of \( u \) at the times \( t_k = k \Delta t \) for \( 0 \leq k \leq \lfloor t_{\text{final}} / \Delta t \rfloor \).
Given that \( u \) will only be known at those times \( t_k \), a numerical time integrator will be needed.
Details are given in Section \ref{section: numerical integration of equations of motion}.

For the space discretization, we consider the simple case of a box \( [x_{\min}, x_{\max}]^d \) for some \( x_{\min} < x_{\max} \).
The box need not be a hypercube, but we make this assumption in order to make the presentation simpler.
Also for simplicity, we assume the number of discretization points to be the same for each dimension, and it will be denoted \( M \).
The discretization points are chosen according to the trapezoidal rule over \( [x_{\min}, x_{\max}]^d \): this means that the grid nodes along each dimension are given by \( x_j = x_{\min} + j\Delta x \), where \( j\in [0, M-1] \) and \( \Delta x = (x_{\max}-x_{\min})/M \).
We denote by \( \Omega_{discr} \) the computational box:
\begin{equation*}
    \Omega_{discr} = (x_{\min}, \dots, x_{\min}) + \intinterval{0}{M-1}^d \Delta x.
\end{equation*}
Furthermore, we apply periodic boundary condition to the box \( \Omega_{discr} \), which needs to be sufficiently large to contain the domain where the solution is not negligibly small.

\subsection{Numerical solution with \texorpdfstring{\( n>1 \)}{n>1}}
\label{section: integration eqns of motion n>1}

We have shown in Section \ref{section: DFVP and tensorization} that it is non-trivial to obtain equations of motion that incorporate the tensorization \eqref{eqn: Bogoliubov tensorization}-\eqref{eqn: Bogoliubov tensorization normalized} when \( n>1 \), because of a linear operator that needs to be inverted but which is difficult to study (and even possibly non-invertible or ill-conditioned in practice).
In this section we propose a 
time integration scheme that
approximates the functions \( \chi_4, \dots, \chi_{2n} \) 
using the tensorization \eqref{eqn: Bogoliubov tensorization}-\eqref{eqn: Bogoliubov tensorization normalized} once the function \( \chi_2 \) has been integrated in time.
The method we propose keeps \( \chi_4, \dots, \chi_{2n} \)  constant with respect to time during the time integration of \( \chi_2 \) over one time step, and then updates \( \chi_4, \dots, \chi_{2n} \) at the end of the time step using the tensorization property \eqref{eqn: Bogoliubov tensorization}-\eqref{eqn: Bogoliubov tensorization normalized}.  This approach gets rid of the difficulties outlined in Section \ref{section: DFVP and tensorization}, while retaining the Bogoliubov ansatz \eqref{eqn: final ansatz u} with the tensorization \eqref{eqn: Bogoliubov tensorization}-\eqref{eqn: Bogoliubov tensorization normalized} after every time step.

The time integration of the excitation functions over a time interval \( [0, \Delta t] \) would involve the following steps:
\begin{enumerate}
    \item Keep the excitation functions \( \chi_4, \dots, \chi_{2n} \)  constant with respect to time over the time interval \( [0, \Delta t] \).
    \item Integrate the equations of motion for the parameters \( (c_0, c_2, \dots, c_{2n}, \chi_2) \) over the time interval \( [0, \Delta t] \).
    \item At time \( t=\Delta t \), use the tensorization \eqref{eqn: Bogoliubov tensorization}-\eqref{eqn: Bogoliubov tensorization normalized} to reconstruct the excitation functions \( \chi_{2k}(\Delta t) \) from \( \chi_2(\Delta t) \), for \( 2\leq k \leq n \).
\end{enumerate}

\subsection{Operator splitting}
\label{section: operator splitting}

After having performed a time and space discretization, it seems natural to integrate the equations of motion \eqref{eqn: equations of motion} and \eqref{eqn: equations of motion angles} using a standard time-integrator, e.g. of Runge-Kutta type.
However, this naive approach to the time integration of the equations of motion is known to impose a stepsize restriction, due to the oscillations created by the unbounded operator \( \ii \Delta \).

On the other hand, the solution to the equation \( \ii \dot{u} = -\Delta u \) is very easy to obtain: the integration is reduced to multiplication with  scalar exponentials in the Fourier domain.
We want to be able to use that fact in order to avoid the stepsize restriction, and this is exactly what an operator splitting allows us to do: instead of solving \( \ii \dot{u} = (-\Delta + V) u \) at once over a time interval, we consider a series of substeps involving the solutions of \( \ii \dot{u} = -\Delta u \) and \( \ii \dot{u} = Vu \).
The details of the substeps depend on the splitting considered, and it can be high-order in time by composing the two basic steps.
We refer to \cite{lubichQuantumClassicalMolecular2008} for more detail about splitting methods for the Schrödinger equation, including error bounds and higher-order composition.
We will consider in this work the Strang splitting.
It has the favorable property of being second-order in time, of being time-reversible, and requiring only three substeps per time step.
It will be applied as follows: over a time step \( [0, \Delta t] \),
\begin{enumerate}
    \item Solve the equation \( \ii \dot{u} = -\Delta u \) over the time interval \( [0, \Delta t/2] \).
    Let \( \widetilde{u}_{1/2} \) be the solution at time \( t=\Delta t/2 \).

    \item Solve the equation \( \ii \dot{u} = Vu \) over the time interval \( [0, \Delta t] \), with \( u(0, \cdot) = \widetilde{u}_{1/2} \).
    Let \( \widetilde{u}_1 \) be the solution at time \( t=\Delta t \).
    The integration is performed by using the equations of motion with \( H=V \).
    The time integrator remains to be specified.

    \item Solve the equation \( \ii \dot{u} = -\Delta u \) over the time interval \( [\Delta t/2, \Delta t] \), with \( u(\Delta t/2, \cdot) = \widetilde{u}_1 \).
\end{enumerate}
The result of the third substep at time \( t=\Delta t \) is an approximation to the exact solution of \( \ii\dot{w} = (-\Delta + V)w \) at time \( t=\Delta t \) with \( w(0,\cdot) = u(0, \cdot) \), and the error of the approximation is of size \( \mathcal{O}(\Delta t^3) \).

We mentioned previously that the solution of the equation \( \ii\dot{u} = -\Delta u \) can be computed exactly in time, let us give some details.
We have
\begin{equation*}
    \dot{u}  = \sum_{k=0}^n \left[ \dot{c}_{2k} \varphi^{\otimes N-2k} \otimes_s \chi_{2k} + c_{2k} \sqrt{N-2k} (\varphi^{\otimes N-2k-1} \otimes_s \dot{\varphi}) \otimes_s \chi_{2k} + c_{2k} \varphi^{\otimes N-2k} \otimes_s \dot{\chi}_{2k} \right]
\end{equation*}
and
\begin{equation*}
    \Delta u = \sum_{k=0}^n \left[ c_{2k} \sqrt{N-2k} (\varphi^{\otimes N-2k-1} \otimes_s \Delta \varphi) \otimes_s \chi_{2k} + c_{2k} \varphi^{\otimes N-2k} \otimes_s \Delta\chi_{2k} \right].
\end{equation*}
By solving the free Schrödinger equations
\begin{equation}
    \label{eqn: operator splitting - free schrod eqns phi chi2}
    \ii \dot{\varphi} = -\Delta \varphi
    \quad\text{ and }\quad
    \ii \dot{\chi}_{2k} = -\Delta \chi_{2k},
\end{equation}
and letting \( \dot{c}_0 = \dots = \dot{c}_{2n} = 0 \), we are effectively solving the free Schrödinger equation \( \ii \dot{u} = -\Delta u \).
After a Fourier transform, each equation of the form \( \ii \dot{f} = -\Delta f \) can be reformulated as a system of linear ordinary differential equations for the Fourier coefficients of \( f \), and this differential system can be integrated exactly in time.

\bigskip
For the second step in the operator splitting described above, we remove the kinetic operator from \( H \).
The equation satisfied by the function \( \varphi \) is now
\begin{equation}
    \label{eqn: operator splitting - potential hartree equation}
    \ii \dot{\varphi} = (v \ast \abs{\varphi}^2 - \mu_{\varphi}) \varphi,\text{ on } \Omega, \quad \varphi(0) = \varphi_0,
\end{equation}
where the coefficient \( \mu_{\varphi} \) is given by \eqref{eqn: mu in hartree equation}.
We have the following classical result (see e.g. \cite{faouGeometricNumericalIntegration2012}*{Section~2.2} for a very similar statement):
\begin{lemma}
    \label{lemma: operator splitting - potential hartree equation with phi0}
    Let \( \varphi \) be the solution to \eqref{eqn: operator splitting - potential hartree equation} with initial condition \( \varphi_0 \).
    Then, for all \( x\in \Omega \) and all times \( t\geq 0 \),
    \begin{equation}
        \label{eqn: operator splitting - potential hartree equation with phi0}
        \ii \dot{\varphi}(t) = (v\ast \abs{\varphi_0}^2 - \mu_{\varphi_0}) \varphi(t).
    \end{equation}
\end{lemma}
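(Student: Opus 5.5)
The plan is to show that the modulus $\abs{\varphi(t,x)}$ is conserved pointwise in time. The convolution potential and $\mu_\varphi$ are then frozen at their initial values.

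Since $v$ is real-valued and $\mu_{\varphi}$ defined in \eqref{eqn: mu in hartree equation} is real, the multiplier $W(t,x) := (v\ast\abs{\varphi(t)}^2)(x) - \mu_{\varphi(t)}$ is real for every $t$ and $x$. The first step is to compute, for fixed $x$,
\begin{equation*}
    \partial_t \abs{\varphi(t,x)}^2 = 2\,\Re\big(\conj{\varphi(t,x)}\,\dot{\varphi}(t,x)\big) = 2\,\Re\big(-\ii\, W(t,x)\,\abs{\varphi(t,x)}^2\big) = 0.
\end{equation*}
This uses only \eqref{eqn: operator splitting - potential hartree equation} and the reality of $W$, so $\abs{\varphi(t,x)}^2 = \abs{\varphi_0(x)}^2$ for all $t\geq 0$ and all $x\in\Omega$.

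Substituting this identity gives $v\ast\abs{\varphi(t)}^2 = v\ast\abs{\varphi_0}^2$ as functions on $\Omega$, and likewise $\mu_{\varphi(t)} = \frac12\int (v\ast\abs{\varphi_0}^2)\abs{\varphi_0}^2\,dx = \mu_{\varphi_0}$. Inserting both into \eqref{eqn: operator splitting - potential hartree equation} yields \eqref{eqn: operator splitting - potential hartree equation with phi0}. As a byproduct, the explicit solution is $\varphi(t,x) = e^{-\ii t (v\ast\abs{\varphi_0}^2(x) - \mu_{\varphi_0})}\varphi_0(x)$, which can also serve as a uniqueness check.

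The only delicate point is justifying the pointwise computation. Solutions live in $L^2$, so the derivative should be understood pointwise (for a.e.\ $x$). I would handle this by first noting that the ODE is an ordinary differential equation in $t$ for each fixed $x$, with a bounded real coefficient, because $v$ is bounded and $\norm{\varphi}{2}$ is conserved. I would then argue via the explicit candidate solution above combined with uniqueness of the solution of \eqref{eqn: operator splitting - potential hartree equation}. Local Lipschitz continuity of the nonlinearity on $L^2$, which holds since $v$ is bounded, gives that uniqueness. This avoids any regularity issue.
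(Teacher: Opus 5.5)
Your proposal is correct and follows the paper's proof: the paper likewise computes $\partial_t \abs{\varphi(t,x)}^2 = 2\,\Re\big(-\ii\,(v\ast\abs{\varphi(t)}^2-\mu_{\varphi(t)})\abs{\varphi(t,x)}^2\big) = 0$ using only that $v$ is real, and leaves implicit the substitution of $\abs{\varphi(t)}^2=\abs{\varphi_0}^2$ into the convolution and into $\mu_{\varphi(t)}$. Your extra justification is sound and makes rigorous the pointwise-in-$x$ differentiation that the paper carries out only formally: the explicit candidate $e^{-\ii t(v\ast\abs{\varphi_0}^2-\mu_{\varphi_0})}\varphi_0$ solves the nonlinear equation, and the nonlinearity is locally Lipschitz on $L^2$ for bounded $v$, which gives uniqueness.
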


\begin{proof}
    For \( x \in \Omega \) and \( t\geq 0 \),
    \begin{align*}
        \frac{\partial}{\partial t} \abs{\varphi(t, x)}^2
        &= \frac{\partial}{\partial t} \left( \conj{\varphi(t, x)} \varphi(t, x) \right)
            = \conj{\dot{\varphi}(t, x)} \varphi(t, x) + \conj{\varphi(t, x)} \dot{\varphi}(t, x)
            = 2 \Re\left( \conj{\varphi(t, x)} \dot{\varphi}(t, x) \right) \\
        &= 2 \Re\left( -\ii \conj{\varphi(t, x)} (v \ast \abs{\varphi(t)}^2 - \mu_{\varphi(t)}) \varphi(t, x) \right) 
            = 2 \Re\left( -\ii (v \ast \abs{\varphi(t)}^2 - \mu_{\varphi(t)}) \abs{\varphi(t, x)}^2 \right).
    \end{align*}
    The potential function \( v \) is a real function hence \( (v \ast \abs{\varphi(t)}^2 - \mu_{\varphi(t)}) \abs{\varphi(t, x)}^2 \) is real, and we get \( \frac{\partial}{\partial t} \abs{\varphi(t, x)}^2 = 0 \).
\end{proof}

Note the difference between \eqref{eqn: operator splitting - potential hartree equation} and \eqref{eqn: operator splitting - potential hartree equation with phi0}: in the latter equation, the function \( v \) is convolved with the initial condition \( \varphi_0 \) and the coefficient \( \mu \) is computed with respect to the initial condition too, making the function \( v\ast \abs{\varphi_0}^2 - \mu_{\varphi_0} \) time-independent.
If the initial condition \( \varphi_0 \) is moreover continuous, we have
\begin{equation}
    \label{eqn: exact integration potential hartree}
    \varphi(t,x) = e^{-\ii t (v\ast \abs{\varphi_0}^2 - \mu_{\varphi_0})(x)} \varphi_0(x),\quad x\in \Omega.
\end{equation}

\subsection{Low-rank approximation of \texorpdfstring{\( \chi_{2j} \)}{chi2j}}
\label{section: LRA of excitation functions}

Dealing numerically with functions of \( 2j \) variables is computationally demanding, and it is also difficult to enforce the orthogonality \eqref{eqn: bogoliubov orthogonality relation} as well as the symmetry with respect to permutations of variables.
In other words, dealing numerically with the functions \( \chi_{2j} \) can be challenging.

A way to bypass these issues consists in considering a low-rank approximation to \( \chi_2 \), which means we assume that the following approximation holds for some rank \( r \geq 1 \):
\begin{equation}
    \label{eqn: LRA chi2}
    \chi_2(t,x_1, x_2) \approx \sum_{l_1, l_2=1}^r b_{l_1, l_2}(t) \eta_{l_1}(t,x_1) \eta_{l_2}(t, x_2),
\end{equation}
where \( B = (b_{k,l})_{k,l=1}^r \in \mathbb{C}^{r\times r} \) is symmetric and invertible at all times, and \( \eta_l \in L^2(\Omega) \) at all times.
At time \( t=0 \), the family \( \{\varphi, \eta_1, \dots, \eta_r\} \) is assumed to be orthonormal.
The normalization of \( \chi_2(t) \) means that the Frobenius norm of \( B(t) \), namely \( \sqrt{\sum_{k,l=1}^r \abs{b_{k,l}(t)}^2} \), is equal to one for all times \( t \).
Due to the tensorization property \eqref{eqn: Bogoliubov tensorization}-\eqref{eqn: Bogoliubov tensorization normalized} of the functions \( \chi_{2j} \), we also have
\begin{equation}
    \label{eqn: LRA chi2j}
    \chi_{2j}(t, x_1, \dots, x_{2j}) = \sum_{l_1, \dots, l_{2j}=1}^r b_{l_1, \dots, l_{2j}}(t) \eta_{l_1}(t, x_1) \dots \eta_{l_{2j}}(t, x_{2j}),
\end{equation}
with the tensor \( b_{l_1, \dots, l_{2j}} \) having a Frobenius norm equal to one for all times and determined from the matrix \( B \):
\begin{equation}
    \label{eqn: core tensor chi2j using tensorization}
    b_{l_1, \dots, l_{2j}} = \frac{\widetilde{b}_{l_1, \dots, l_{2j}}}{\norm{\widetilde{b}}{F}},\qquad
    \widetilde{b}_{l_1, \dots, l_{2j}} := \sum_{\tau\in P_{2j}} b_{l_{\tau(1)}, l_{\tau(2)}} \dots b_{l_{\tau(2j-1)}, l_{\tau(2j)}}.
\end{equation}
Note that for simplicity we use the same notation ``\( b \)'' for the core tensors of \( \chi_2 \) and \( \chi_{2j} \), and they are distinguished only by the number of their indices: \( b_{l_1, \dots, l_{2j}} \) refers to the core tensor of \( \chi_{2j} \) because it has \( 2j \) indices.

After performing the space discretization described in Section \ref{section: time and space discretizations}, each function \( \eta_l \) is discretized.
We can consider the matrix \( U \), where each column \( U_{\cdot, l} \) corresponds to the values of \( \eta_l \) on the computational box \( \Omega_{discr} \).
This leads to the discretization of \( \chi_2 \) on \( \Omega_{discr} \) as a matrix of the form \( U B U^\top \).
At time \( t=0 \), the matrix \( U \) is unitary.
This particular structure of the discretization of \( \chi_2 \) simply stems from the symmetry of \( \chi_2 \) that is assumed at all times, hence we need to take care of preserving it during the time integration of \( \chi_2 \).
In particular, the low-rank structure of \( \chi_2 \) needs to be kept at all times, and this means that the right-hand side of the \( \dot{\chi}_2 \) equation in \eqref{eqn: equations of motion} has to be projected onto the tangent space of the manifold of rank-\( r \) matrices.
More details on how this projection is performed can be found in \cite{kochDynamicalLowRankApproximation2007}, where evolution equations for the factors \( U \) and \( B \) are given, but they involve the inverse matrix \( B^{-1} \).
The main issue is that the matrix \( B \) typically has small singular values, hence it is ill-conditioned and so it is not advised to invert it numerically.
There are fortunately alternatives to this, and among them we choose the BUG integrator from \cites{cerutiUnconventionalRobustIntegrator2022,cerutiTimeIntegrationSymmetric2020,cerutiRankadaptiveRobustIntegrator2022}, which is robust to arbitrarily small singular values.
The essence of this integrator is that the basis matrix \( U \) is updated in a first step by integrating the differential equation for $K=UB$ over a time step and computing the updated $U$ as an orthonormal basis of the range of the updated $K$, and in a second step the matrix \( B \) is updated in time with respect to the new basis $U$. 

In this work, we will consider the second-order rank-adaptive BUG(\( 2r \)) integrator proposed in \cite{cerutiRobustSecondorderLowrank2024}.
It is given for rank-\( r \) matrices of the form \( U S V^* \) with \( U, V \) unitary and \( S \) invertible.
The integrator needs to be slightly modified in order to preserve the complex symmetric structure \( U B U^\top \).
Some details are given in Appendix \ref{appendix: BUG for complex symmetric matrices}.
This integrator also allows us to have a rank-adaptive algorithm, where the rank \( r \) of the approximation of \( \chi_2 \) is adapted at every time step to a new rank \( \tilde{r} \leq 4r \).
The exact value of \( \tilde{r} \) may depend on numerical parameters: for example, we may want to have the singular values of \( B \) greater than some numerical threshold, or to keep \( \tilde{r} \leq r_{\max} \) for some given maximal rank \( r_{\max} \).
We will discuss this in more detail in Section \ref{section: numerical experiments}.

We also note that the low-rank approximation of the excitation functions \( \chi_{2k} \) means that the free Schrödinger equations given in \eqref{eqn: operator splitting - free schrod eqns phi chi2} can be solved by solving the following one-particle free Schrödinger equations:
\begin{equation}
    \label{eqn: free Schrodinger equations on phi and etas}
    \ii\dot{\varphi} = -\Delta \varphi, \quad \ii\eta_j = -\Delta \eta_j, \quad 1 \leq j \leq r,
\end{equation}
along with \( \dot{B} = 0 \).

\subsubsection{Using low-rank approximations to further reduce computational complexity}

The low-rank approximations \eqref{eqn: LRA chi2} of \( \chi_2 \) and \eqref{eqn: LRA chi2j} of \( \chi_{2j} \) allow us to simplify some brakets of Section \ref{section: computational complexity equations of motion} and to further reduce the computational complexity of the method.
Recall that we have shown the following equality in Section \ref{section: reducing the number of permutations and variables in braket}:
\begin{equation*}
    \braket{\varphi^{\otimes N-m-2p} \otimes \chi_{2p}}{V_{\ell_1, \ell_2} u}_{x_{\intinterval{m+1}{N}}}
    = \sum_{q=0}^n \frac{c_{2q}}{\sqrt{\binom{N}{2q}}} \sum_{\substack{\tau \in S_{\mathcal{T}^C}^{2q}\\\tau\,\mathrm{sorted,\,valid}}} \braket{\varphi^{\otimes N-m-2p} \otimes \chi_{2p}}{V_{\ell_1, \ell_2} \varphi^{\otimes N-2q}(x \setminus x_\tau) \chi_{2q}(x_{\tau})}_{\substack{x_{\mathcal{B}} \\ \mathbf{1}_{\mathcal{T}}}},
\end{equation*}
where the sets \( \mathcal{T} \) and \( \mathcal{T}^C \) were defined in \eqref{eqn: def calT}, \( \mathcal{B} \) in \eqref{eqn: def calB}, and a \emph{valid permutation} is defined in Definition \ref{def: valid permutation tau}.
We also recall that \( \mathcal{T} \) is the set of variables in \( \intinterval{m+1}{N} \) that can be ``skipped'' during the computation of the braket, and \( \mathcal{B} \) is the set of variables in \( \intinterval{m+1}{N} \) that actually determine the value of the braket.
Using the low-rank approximation \eqref{eqn: LRA chi2j}, we can write
\begin{multline}
    \label{eqn: subsect LRA braket from previous section on reducing nb of perms and vars}
    \braket{\varphi^{\otimes N-m-2p} \otimes \chi_{2p}}{V_{\ell_1, \ell_2} u}_{x_{\intinterval{m+1}{N}}} \\
    = \sum_{q=0}^n \frac{c_{2q}}{\sqrt{\binom{N}{2q}}} \sum_{\substack{\tau \in S_{\mathcal{T}^C}^{2q}\\\tau\,\mathrm{sorted,\,valid}}} \sum_{\substack{i'_1, \dots, i'_{2p}=1\\i_1, \dots, i_{2q}=1}}^r \conj{b_{i'_1, \dots, i'_{2p}}} b_{i_1, \dots, i_{2q}}
        \braket{\varphi^{\otimes N-m-2p} \otimes \\ \eta_{i'_1} \otimes \dots \otimes \eta_{i'_{2p}}
        }{V_{\ell_1, \ell_2} \left(\begin{gathered} \varphi^{\otimes N-2q}(x \setminus x_\tau)\\ \eta_{i_1}(x_{\tau(1)}) \cdots \eta_{i_{2q}}(x_{\tau(2q)}) \end{gathered}\right)}_{\substack{x_{\mathcal{B}}\\\mathbf{1}_{\mathcal{T}}}}.
\end{multline}
For any \( \tau\in S^{2q}_{\mathcal{T}^C} \), let us define the following sets:
\begin{equation}
    \begin{aligned}
        \mathcal{Q} &= \left\{ k\in \intinterval{1}{2q} : \tau(k) \in \intinterval{N-2p+1}{N} \setminus \{\ell_1, \ell_2\} \right\}, && \mathcal{Q}^C := \intinterval{1}{2q} \setminus \mathcal{Q}, \\
        \mathcal{P} &= \tau(\mathcal{Q})-N+2p \in \intinterval{1}{2p}, && \mathcal{P}^C = \intinterval{1}{2p} \setminus \mathcal{P}.
    \end{aligned}
\end{equation}
The set \( \mathcal{Q} \) is the set of marginals in \( \chi_{2q} \) from the right argument of the braket that are paired with marginals of \( \chi_{2p} \) from the left argument of the braket, according to the permutation \( \tau \) and omitting the variables that are acted on by the operator.
The set \( \mathcal{P} \) is the set of those marginals in \( \chi_{2p} \).
For the sake of readability, we omit in \( \mathcal{P} \) and \( \mathcal{Q} \) the explicit dependence on \( \ell_1, \ell_2 \) and \( \tau \).
Moreover, we will use the following notation: if \( \mathcal{I} \) is a set of indices and \( i_{\mathcal{I}} \) a multi-index, the sum \( \sum_{i_{\mathcal{I}}=1}^r \) corresponds to a sum from \( 1 \) to \( r \) of every index in \( i_{\mathcal{I}} \).

The orthonormality of the functions \( \eta_k \) can be used to reduce the computational cost of the summand in \eqref{eqn: subsect LRA braket from previous section on reducing nb of perms and vars}:
\begin{lemma}
    \label{lemma: get rid of all one-dimensional brakets of the form phi-phi}
    Let \( p,q\in \intinterval{0}{n} \) and \( \tau \in S^{2q}_{\mathcal{T}^C} \) a valid sorted permutation.
    We have
    \begin{multline}
        \label{eqn: simplification braket potential part with multiindices calQ and calP}
        \braket{\varphi^{\otimes N-m-2p} \otimes \chi_{2p}}{V_{\ell_1, \ell_2} \varphi^{\otimes N-2q}(x\setminus x_{\tau}) \chi_{2q}(x_\tau)}_{\substack{x_{\mathcal{B}} \\ \mathbf{1}_{\mathcal{T}}}} \\
        = \sum_{i'_{\mathcal{P}^C}=1}^r \sum_{i_{\mathcal{Q}^C}=1}^r \braket{\varphi^{\otimes N-m-2p}\otimes \\  \eta_{i'_1} \otimes \dots \otimes \eta_{i'_{2p}}}{V_{\ell_1, \ell_2}\left( \begin{gathered} \varphi^{\otimes N-2q}(x\setminus x_{\tau}) \\ (\eta_{i_1} \otimes \dots \otimes \eta_{i_{2q}})(x_\tau) \end{gathered} \right)}_{\substack{x_{\{\ell_1, \ell_2\}\setminus\intinterval{1}{m}}\\\mathbf{1}_{\intinterval{m+1}{N}\setminus\{\ell_1, \ell_2\}}}}
            \sum_{i'_{\mathcal{P}}=1}^r \conj{b_{i'_{\mathcal{P}}, i'_{\mathcal{P}^C}}} b_{i'_\mathcal{P}, i_{\mathcal{Q}^C}}.
    \end{multline}
\end{lemma}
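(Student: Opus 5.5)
Starting from the summand of \eqref{eqn: subsect LRA braket from previous section on reducing nb of perms and vars}, we insert the low-rank expansions \eqref{eqn: LRA chi2j} of both $\chi_{2p}$ (on the left, conjugated coefficients $\conj{b_{i'_1,\dots,i'_{2p}}}$) and $\chi_{2q}$ (on the right, coefficients $b_{i_1,\dots,i_{2q}}$). This turns the braket along $x_{\mathcal{B}}$ into a double sum over multi-indices $i'$ and $i$ of brakets of pure tensor products of one-particle functions $\varphi,\eta_l$, acted on by $V_{\ell_1,\ell_2}$. Because $V_{\ell_1,\ell_2}$ acts only on the variables $x_{\ell_1},x_{\ell_2}$, each such braket factorizes into a braket along $x_{\{\ell_1,\ell_2\}\setminus\intinterval{1}{m}}$ (containing the operator) times a product of one-dimensional brakets along the remaining variables of $\mathcal{B}$, namely those indexed by $\intinterval{N-2p+1}{N}\setminus\{\ell_1,\ell_2\}$. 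Along $\mathcal{T}$ the brakets are $\braket{\varphi}{\varphi}=1$, as already used in Lemma~\ref{lemma: sum over tau in Sjn with no braket along phi}.

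The key step is to identify those remaining one-dimensional brakets. By validity of $\tau$ (Definition~\ref{def: valid permutation tau}), every index $k\in\intinterval{N-2p+1}{N}\setminus\{\ell_1,\ell_2\}$ lies in the image of $\tau$, so $k=\tau(k')$ with $k'\in\mathcal{Q}$, and the corresponding left factor is $\eta_{i'_{k-N+2p}}$ with $k-N+2p\in\mathcal{P}$. Hence the braket along $x_k$ is $\braket{\eta_{i'_{k-N+2p}}}{\eta_{i_{k'}}}=\delta_{i'_{k-N+2p},\,i_{k'}}$ by orthonormality of $\{\eta_l\}$. (Here I will need that the $\eta_l$ remain orthonormal; this is true at $t=0$ and preserved by the unitary free flow and the BUG integrator producing orthonormal bases, so I will take it as a standing assumption.) These Kronecker deltas let us collapse the sum over $i_{\mathcal{Q}}$: each $i_{k'}$, $k'\in\mathcal{Q}$, is identified with the matching $i'$ index in $\mathcal{P}$ via the bijection $\tau-N+2p:\mathcal{Q}\to\mathcal{P}$. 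After this identification the variables indexed by $\intinterval{N-2p+1}{N}\setminus\{\ell_1,\ell_2\}$ may be declared as giving $\mathbf{1}$, which is why the subscript in \eqref{eqn: simplification braket potential part with multiindices calQ and calP} becomes $\mathbf{1}_{\intinterval{m+1}{N}\setminus\{\ell_1,\ell_2\}}$ with the operator braket only along $x_{\{\ell_1,\ell_2\}\setminus\intinterval{1}{m}}$.

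Finally, I regroup: the remaining operator braket depends only on $i'_{\mathcal{P}^C}$ and $i_{\mathcal{Q}^C}$ (the indices attached to variables acted on by $V_{\ell_1,\ell_2}$ or lying in $\intinterval{1}{m}$), so the sum over the identified indices $i'_{\mathcal{P}}$ factors out as $\sum_{i'_{\mathcal{P}}}\conj{b_{i'_{\mathcal{P}},i'_{\mathcal{P}^C}}}\,b_{i'_{\mathcal{P}},i_{\mathcal{Q}^C}}$, where the symmetry of the core tensors (inherited from the symmetry of $\chi_{2p},\chi_{2q}$, or from \eqref{eqn: core tensor chi2j using tensorization}) justifies reordering indices so that the $\mathcal{P}$ (resp. $\mathcal{Q}$) slots come first. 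The main obstacle is purely bookkeeping: making precise the bijection between $\mathcal{Q}$ and $\mathcal{P}$ and checking that, in the sorted valid $\tau$, the ordering of indices inside $b$ is consistent with the notation $b_{i'_{\mathcal{P}},i_{\mathcal{Q}^C}}$; I would handle this by invoking permutation-invariance of $b$ rather than tracking positions explicitly.
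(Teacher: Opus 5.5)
Your proposal is correct and follows essentially the same route as the paper's proof. Both expand $\chi_{2p}$ and $\chi_{2q}$ in the $\eta$-basis, use validity of $\tau$ to get $\tau(\mathcal{Q}) = \intinterval{N-2p+1}{N}\setminus\{\ell_1,\ell_2\}$ so that the brakets along these variables become Kronecker deltas collapsing the $i_{\mathcal{Q}}$-sum, and conclude by regrouping with the symmetry of the core tensors; your explicit notes on orthonormality of the $\eta_l$ and on the operator braket depending only on $i'_{\mathcal{P}^C}, i_{\mathcal{Q}^C}$ make precise what the paper leaves implicit in its notation.
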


\begin{proof}
    We use the rank-\( r \) approximation \eqref{eqn: LRA chi2j} of \( \chi_{2p} \) and \( \chi_{2q} \):
    \begin{multline*}
        \braket{\varphi^{\otimes N-m-2p} \otimes \chi_{2p}}{V_{\ell_1, \ell_2}\left( \varphi^{\otimes N-2q}(x\setminus x_{\tau}) \chi_{2q}(x_\tau) \right)}_{\substack{x_{\mathcal{B}} \\ \mathbf{1}_{\mathcal{T}}}} \\
        = \sum_{\substack{i'_1, \dots, i'_{2p}=1\\i_1, \dots, i_{2q}=1}}^r \conj{b_{i'_1, \dots, i'_{2p}}} b_{i_1, \dots, i_{2q}} \braket{\varphi^{\otimes N-m-2p}\otimes \eta_{i'_1} \otimes \dots \otimes \eta_{i'_{2p}}}{V_{\ell_1, \ell_2} \varphi^{\otimes N-2q}(x\setminus x_\tau) (\eta_{i_1} \otimes \dots \otimes \eta_{i_{2q}})(x_\tau)}_{\substack{x_{\mathcal{B}} \\ \mathbf{1}_{\mathcal{T}}}}.
    \end{multline*}
    We can write the sum over \( (i_1, \dots, i_{2q}) \) as two sums: one over the multi-index \( i_{\mathcal{Q}^C} \) and one over the multi-index \( i_{\mathcal{Q}} \):
    \begin{multline*}
        \braket{\varphi^{\otimes N-m-2p} \otimes \chi_{2p}}{V_{\ell_1, \ell_2} \varphi^{\otimes N-2q}(x\setminus x_{\tau}) \chi_{2q}(x_\tau)}_{\substack{x_{\mathcal{B}} \\ \mathbf{1}_{\mathcal{T}}}} \\
        = \sum_{i'_1, \dots, i'_{2p}=1}^r \sum_{i_{\mathcal{Q}^C}=1}^r \sum_{i_{\mathcal{Q}}=1}^r \conj{b_{i'_1, \dots, i'_{2p}}} b_{i_1, \dots, i_{2q}} \braket{\varphi^{\otimes N-m-2p} \otimes \\ \eta_{i'_1} \otimes \dots \otimes \eta_{i'_{2p}}}{V_{\ell_1, \ell_2} 
            \left(\begin{gathered}
                \varphi^{\otimes N-2q}(x\setminus x_{\tau}) \\
                (\eta_{i_1} \otimes \dots \otimes \eta_{i_{2q}})(x_\tau)
            \end{gathered}\right)}_{\substack{x_{\mathcal{B}} \\ \mathbf{1}_{\mathcal{T}}}}.
    \end{multline*}
    The set \( \mathcal{Q} \) is the set of indices \( k \) for which the braket along variable \( x_{\tau(k)} \) is of the form \( \braket{\eta_{i'_{\tau(k)-N+2p}}}{\eta_{i_k}} = \delta_{i'_{\tau(k)-N+2p}, i_k} \). 
    By definition of the set \( \mathcal{P} \), we get
    \begin{multline*}
        \braket{\varphi^{\otimes N-m-2p} \otimes \chi_{2p}}{V_{\ell_1, \ell_2} \varphi^{\otimes N-2q}(x\setminus x_{\tau}) \chi_{2q}(x_\tau)}_{\substack{x_{\mathcal{B}} \\ \mathbf{1}_{\mathcal{T}}}} \\
        = \sum_{i'_1, \dots, i'_{2p}=1}^r \sum_{i_{\mathcal{Q}^C}=1}^r \sum_{i_{\mathcal{Q}}=1}^r \conj{b_{i'_1, \dots, i'_{2p}}} b_{i_1, \dots, i_{2q}} \braket{\varphi^{\otimes N-m-2p} \otimes \\ \eta_{i'_1} \otimes \dots \otimes \eta_{i'_{2p}}}{V_{\ell_1, \ell_2}\left( \begin{gathered} \varphi^{\otimes N-2q}(x\setminus x_{\tau}) \\ (\eta_{i_1} \otimes \dots \otimes \eta_{i_{2q}})(x_\tau) \end{gathered} \right)}_{\substack{x_{\mathcal{B}\setminus \tau(\mathcal{Q})} \\ \mathbf{1}_{\mathcal{T} \cup \tau(\mathcal{Q})}}} \delta_{i'_{\mathcal{P}}, i_\mathcal{Q}}.
    \end{multline*}
    The indices \( i_1, \dots, i_{2q} \) still appear above in the arguments of the braket to simplify the notation, but the one-dimensional brakets along variables indexed by \( \tau(\mathcal{Q}) \) are known to be equal to one and thus do not need to be computed in practice.
    Moreover, the permutation \( \tau \) being valid, by Definition \ref{def: valid permutation tau} we have \( \intinterval{N-2p+1}{N}\setminus \{\ell_1, \ell_2\} \subset \tau \).
    It means that
    \begin{equation*}
        \forall j\in \intinterval{N-2p+1}{N}\setminus \{\ell_1, \ell_2\}, \exists k\in \intinterval{1}{2q} \quad s.t. \quad \tau(k) = j.
    \end{equation*}
    This implies \( \tau(\mathcal{Q}) = \intinterval{N-2p+1}{N}\setminus \{\ell_1, \ell_2\} \).
    Thus we have \( \mathcal{B}\setminus \tau(\mathcal{Q}) = \{\ell_1, \ell_2\} \setminus \intinterval{1}{m} \) and \( \mathcal{T} \cup \tau(\mathcal{Q}) = \intinterval{m+1}{N} \setminus \{\ell_1, \ell_2\} \).
    Now we do a similar decomposition on the sum over the indices \( (i'_1, \dots, i'_{2p}) \): it is a sum over the multi-index \( i'_{\mathcal{P}^C} \) and a sum over the multi-index \( i'_\mathcal{P} \).
    Then,
    \begin{multline*}
        \braket{\varphi^{\otimes N-m-2p} \otimes \chi_{2p}}{V_{\ell_1, \ell_2} \varphi^{\otimes N-{2q}}(x\setminus x_{\tau}) \chi_{2q}(x_\tau)}_{\substack{x_{\mathcal{B}} \\ \mathbf{1}_{\mathcal{T}}}} \\
        = \sum_{i'_{\mathcal{P}^C}=1}^r \sum_{i_{\mathcal{Q}^C}=1}^r \braket{\varphi^{\otimes N-m-2p} \\ \otimes \eta_{i'_1} \otimes \dots \otimes \eta_{i'_{2p}}}{V_{\ell_1, \ell_2}\left( \begin{gathered} \varphi^{\otimes N-2q}(x\setminus x_{\tau}) \\ (\eta_{i_1} \otimes \dots \otimes \eta_{i_{2q}})(x_\tau) \end{gathered} \right)}_{\substack{x_{\{\ell_1, \ell_2\} \setminus \intinterval{1}{m}} \\ \mathbf{1}_{\intinterval{m+1}{N} \setminus \{\ell_1, \ell_2\}}}} \sum_{i'_{\mathcal{P}}=1}^r \sum_{i_{\mathcal{Q}}=1}^r \delta_{i'_\mathcal{P}, i_\mathcal{Q}} \conj{b_{i'_1, \dots, i'_{2p}}} b_{i_1, \dots, i_{2q}}.
    \end{multline*}
    Again, we have kept all the indices \( i'_1, \dots, i'_{2p} \) in the braket to simplify the notation, but only the braket along variables indexed by \( \{\ell_1, \ell_2\} \setminus \intinterval{1}{m} \) needs to be computed and the braket along all other variables indexed by \( \intinterval{m+1}{N} \setminus \{\ell_1, \ell_2\} \) is equal to one.
    We conclude using the symmetry of the core tensors.
\end{proof}

Finally, we have shown the following:
\begin{multline}
    \label{eqn: summary all efficient sums braket varphi Nmp chip vs Vu}
        \braket{\varphi^{\otimes N-m-2p} \otimes \chi_{2p}}{Vu}_{x_{\intinterval{m+1}{N}}} \\
        = \sum_{q=0}^n \sum_{\ell_1, \ell_2} \frac{\mathsf{M}_{\ell_1, \ell_2}}{\sqrt{\binom{N}{2q}}} \sum_{\substack{\tau \in S^{2q}_{\mathcal{T}^C}\\\tau\,\mathrm{sorted,\,valid}}} \sum_{\substack{i'_{\mathcal{P}^C}=1\\i_{\mathcal{Q}^C}=1}}^r 
        \mathsf{P}^{(p,q)}_{i'_{\mathcal{P}^C}, i_{\mathcal{Q}^C}} \braket{\varphi^{\otimes N-m-2p} \otimes \\ \eta_{i'_1} \otimes \dots \otimes \eta_{i'_{2p}}}{V_{\ell_1, \ell_2}\left( \begin{gathered} 
            \varphi^{\otimes N-2q}(x\setminus x_\tau) \\ (\eta_{i_1} \otimes \dots \otimes \eta_{i_{2q}})(x_\tau) 
        \end{gathered} \right)}_{\substack{x_{\{\ell_1, \ell_2\} \setminus \intinterval{1}{m}} \\ \mathbf{1}_{\intinterval{m+1}{N} \setminus \{\ell_1, \ell_2\}}}}
\end{multline}
with
\begin{equation}
    \label{eqn: coeff Ppq calPc calQc}
    \mathsf{P}^{(p,q)}_{i'_{\mathcal{P}^C}, i_{\mathcal{Q}^C}} := \sum_{i'_{\mathcal{P}}=1}^r \conj{b_{i'_\mathcal{P}, i'_{\mathcal{P}^C}}} b_{i'_\mathcal{P}, i_{\mathcal{Q}^C}},
\end{equation}
and where the sum over \( \ell_1, \ell_2 \) is over appropriate values of \( \ell_1 \) and \( \ell_2 \) given by Lemma \ref{lemma: reducing number of effective potential operators}.
The coefficient \( \mathsf{M}_{\ell_1, \ell_2} \) corresponds to the multiplicative coefficient associated to \( V_{\ell_1, \ell_2} \) in Lemma \ref{lemma: reducing number of effective potential operators}.
The equivalent expression for the kinetic operator is trivially obtained using Lemma \ref{lemma: reducing number of effective kinetic operators}: the sum over \( \ell_1, \ell_2 \) is transformed into a sum over \( k \in \intinterval{1}{m} \cup \{m+1, N-2p+1\} \), and the associated coefficient \( \mathsf{M}_k \) is also given by Lemma \ref{lemma: reducing number of effective kinetic operators}.
The sets \( \mathcal{B}, \mathcal{P}, \mathcal{Q}, \mathcal{T} \) are defined in this case by letting \( \ell_1 = \ell_2 = k \).

Note that the computations with a low-rank approximation to the excitation functions can be performed efficiently in the case of a diagonal matrix \( B \), e.g. computing \( b_{i_1, \dots, i_{2j}} \) for \( j\in \intinterval{2}{n} \) or computing the coefficient \( \mathsf{P}^{(p,q)}_{i'_{\mathcal{P}^C}, i_{\mathcal{Q}^C}} \).
This requires in particular that the matrix \( B \) remains diagonal, even during the substeps of the BUG integration.
This is not guaranteed with the standard BUG integrator but can be imposed at a moderate computational cost, some details are given in Appendix \ref{appendix: BUG for complex symmetric matrices}.
Some efficient implementation details that use \( B \) diagonal are given in Appendix \ref{appendix: implementation details}.

\subsection{Dealing with the potential using convolutions}

One of the advantages and reasons that we introduce the low-rank approximation \eqref{eqn: LRA chi2} for \( \chi_2 \), in addition to those mentioned previously, is that it is now enough to deal with functions of one particle.
This greatly reduces the computational cost in practice, compared to dealing with \( \chi_2 \) as a genuine two-particle function.

However, by looking at brakets of the form \eqref{eqn: summary all efficient sums braket varphi Nmp chip vs Vu}, it is clear that we still need to deal with potential operators \( V_{\ell_1, \ell_2} \) which are multiplicative two-body operators defined by \( (V_{\ell_1, \ell_2} f)(x) = v(x_{\ell_1} - x_{\ell_2}) f(x) \) for \( f\in L^2(\Omega^N) \).
The function \( v(x_{\ell_1} - x_{\ell_2}) \) is a function of two particles, and we want to reduce the computational cost associated to dealing with it.
A first possibility is that the potential function \( v \) is separable in the following (low-rank) sense: there are functions \( (h_i)_{i=1}^{r_v} \) and a matrix \( (m_{i,j})_{i,j=1}^{r_v} \) such that \( v(x-y) = \sum_{i,j=1}^{r_v} m_{i,j} h_i(x)h_j(y) \), for some \( r_v \geq 1 \) and all \( x,y\in \Omega \).
In this case, two-particle brakets involving the potential function \( v(x_{\ell_1}-x_{\ell_2}) \) can be rewritten using one-particle brakets only.
If the potential function \( v \) is not separable in the above sense, it can be approximated by a sum of separable functions, for example a sum of exponentials \cites{beylkinApproximationFunctionsExponential2005,kumePracticalMethodTreating1999}.

A second possibility consists in looking carefully at how the potential \( v(x-y) \) is involved in our method, and noticing that it is never needed to actually deal with \( v(x-y) \) as a two-particle function.
The computations that we need to perform involving the potential operator are of the form \eqref{eqn: summary all efficient sums braket varphi Nmp chip vs Vu}, and the potential function \( v(x_{\ell_1}-x_{\ell_2}) \) appears under one of the following forms:
\begin{enumerate}[label=(\alph*)]
    \item \( v(x_{\ell_1}-x_{\ell_2})g_1(x_{\ell_1}) g_2(x_{\ell_2}) \)
    \item \( \braket{f_1(x_{\ell_1})}{v(x_{\ell_1}-x_{\ell_2}) g_1(x_{\ell_1}) g_2(x_{\ell_2})}_{x_{\ell_1}} = g_2(x_{\ell_2}) (v\ast [\conj{f_1} g_1])(x_{\ell_2}) \)
    \item \( \braket{f_1(x_{\ell_1}) f_2(x_{\ell_2})}{v(x_{\ell_1}-x_{\ell_2}) g_1(x_{\ell_1}) g_2(x_{\ell_2})}_{x_{\ell_1}, x_{\ell_2}} = \braket{f_1(x_{\ell_1})}{g_1(x_{\ell_1}) (v\ast [\conj{f_2}g_2])(x_{\ell_1})} \)
\end{enumerate}
for some functions \( f_1, f_2, g_1, g_2 \in \{\varphi, \eta_1, \dots, \eta_r\} \), and where \( f\ast g \) denotes the usual convolution \( (f\ast g)(x) = \int_{\mathbb{R}^d} f(x-y)g(y)dy \).
Note that the terms of type (a) and (b) only appear during the computation of the equation of motion associated to \( \chi_2 \), hence for those terms we have \( f_1 = \varphi \).
For the convolution involved in terms of type (b) and (c), we recall that a convolution in the space domain is equivalent to a pointwise multiplication in the Fourier domain.
In practice, after the space discretization is performed, we use a Fast Fourier Transform to compute the terms of type (b) and (c) in time \( \mathcal{O}(d M^d \log M) \), where \( M \) is the number of points along each dimension of the computational grid (see Section \ref{section: time and space discretizations} for details about the space discretization).
It remains to deal with the terms of type (a), which are arguably the most problematic.
Such terms appear in the equation of motion for \( \chi_2 \) in \eqref{eqn: equations of motion}, more precisely with terms of the form
\begin{equation*}
    \proj_{\mathcal{N}^{(2)}_\varphi(u)} \braket{\varphi^{\otimes N-2}}{-\ii v(x_1-x_2) u}_{x_{\intinterval{3}{N}}}.
\end{equation*}
This braket is a sum of terms of type (a), but the sum is such that the braket results in a symmetric function of variables \( (x_1, x_2) \).
Thus, after the space discretization, the braket is discretized as a symmetric matrix \( F \in \mathbb{C}^{M^d \times M^d} \).
In the following, we make a slight abuse of notation and let \( \varphi,\chi_2 \) denote the discretization of the functions \( \varphi, \chi_2 \) over the computational grid \( \Omega_{discr} \).
That is, we now have in mind for the rest of this section that \( \varphi \) is a \( \mathbb{C}^{M^d} \) vector, and \( \chi_2 \) is a \( \mathbb{C}^{M^d \times M^d} \) matrix.
In order to compute the right-hand side of the equation of motion \eqref{eqn: equations of motion} for \( \chi_2 \), we need to perform the projection \( \proj_{\mathcal{N}^{(2)}_\varphi(u)}(F) \), where \( \mathcal{N}^{(2)}_\varphi(u) \) is now defined using the vector \( \varphi \) and matrix \( \chi_2 \) instead of the continuous functions.
The projection of \( F \) onto \( \{\chi_2\}^\perp \) is easy to perform:
\begin{equation*}
    \proj_{\{\chi_2\}^\perp} (F) = F - \braket{\chi_2}{F} \chi_2.
\end{equation*}
Here, the braket is the discretized analog of the continuous braket \( \braket{\cdot}{\cdot} \), and we again use the same notation for both.
Since the projection of \( F \) onto \( \{\chi_2\}^\perp \) can be performed easily, we assume without loss of generality that \( \proj_{\{\chi_2\}^\perp}(F) = F \).
The projection of \( F \) onto \( \{\varphi\}^\perp \times_s \{\varphi\}^\perp \) is more challenging.
Note that, since \( F \) is symmetric, the projection onto \( \{\varphi\}^\perp \times_s \{\varphi\}^\perp \) is equivalent to a projection onto \( \{\varphi\}^\perp \times \{\varphi\}^\perp \).

We observe that the equation of motion for \( \chi_2 \) is used only within the BUG integrator (see Section \ref{section: LRA of excitation functions}), and that the BUG integrator only involves the right-hand side of the differential equation within matrix-matrix products.
Therefore, after the space discretization, the right-hand side of the equation of motion for \( \chi_2 \) is never treated as a dense matrix but always involved in matrix-matrix products of the form \( \proj_{\mathcal{N}^{(2)}_\varphi(u)}(F)A \) for some (tall) matrix \( A \).
For simplicity, we treat each column of \( A \) separately: we now have to compute matrix-vector products of the form
\begin{equation*}
    \proj_{\{\varphi\}^\perp \times \{\varphi\}^\perp}(F) w,
\end{equation*}
for some vector \( w \in \mathbb{C}^{M^d} \).
For any \( \mathbf{x} \in \Omega_{discr} \), the vector \( \left(\left[\proj_{\{\varphi\}^\perp \times \{\varphi\}^\perp}(F)\right]_{\mathbf{x}, \mathbf{y}}\right)_{\mathbf{y}\in \Omega_{discr}} \in \mathbb{C}^{M^d} \) is orthogonal to \( \varphi \) with respect to the inner product \( \braket{v}{\tilde{v}} = v^* \tilde{v} \), which equivalently means that it is orthogonal to \( \conj{\varphi} \) with respect to the product \( (v, \tilde{v}) = v^\top \tilde{v} \).
We are interested in this non-conjugated product due to the complex symmetric BUG integrator that involves this product with no complex conjugation, see for instance \eqref{eqn: appendix - usual Kstep BUG integrator} in Appendix \ref{appendix: BUG for complex symmetric matrices}.
Thus, the component of \( w \) along \( \conj{\varphi} \) will vanish in the above matrix-vector product, and we get
\begin{equation*}
   \proj_{\{\varphi\}^\perp \times \{\varphi\}^\perp}(F) w
   =  \proj_{\{\varphi\}^\perp \times C^{M^d}}(F) \proj_{\{\conj{\varphi}\}} w.
\end{equation*}
The remaining projection onto \( \{\varphi\}^\perp \) can be performed after the matrix-vector product.
Finally, we have
\begin{equation*}
    \proj_{\{\varphi\}^\perp \times \{\varphi\}^\perp}(F) w = \proj_{\{\varphi\}^\perp} \left( F \proj_{\{\conj{\varphi}\}}(w) \right).
\end{equation*}
By doing so, we avoid the challenging projection of the marginals of \( F \) when computing the equation of motion associated to \( \chi_2 \).

\subsection{Time integration of the equations of motion}
\label{section: numerical integration of equations of motion}

We discuss in this section the time integration of the equations of motion \eqref{eqn: equations of motion} along with \eqref{eqn: equations of motion angles}, during the step of the splitting where \( H=V \) (see Section \ref{section: operator splitting}).
We recall that the function \( \varphi \) is solution to \eqref{eqn: operator splitting - potential hartree equation with phi0} which yields an exact time integration given by \eqref{eqn: exact integration potential hartree}.
The equation of motion for the vector of coefficients \( \mathbf{c} \) is of the form \( \ii \dot{\mathbf{c}} = \check{\mathbf{V}} \mathbf{c} \) with \( \check{\mathbf{V}} \) the Hermitian matrix described in \eqref{eqn: setting n>1 - equations of motion coeffs} for $H=V$.
The equation of motion for \( \chi_2 \) is numerically integrated using a complex symmetric variant of the BUG integrator, as mentioned in Section \ref{section: LRA of excitation functions}.
The other excitation functions are reconstructed from \( \chi_2 \) using the procedure detailed in Section \ref{section: integration eqns of motion n>1}.

Overall, after performing the operator splitting into the kinetic and potential parts, the differential system to integrate within the splitting step for the potential part reads as follows:
\begin{equation}
    \left\{
    \begin{aligned}
        \dot{\mathbf{c}} &= -\ii \check{\mathbf{V}} \mathbf{c} \\
        \dot{\chi}_2 &= \frac{1}{c_2} \sqrt{\binom{N}{2}} \proj_{\mathcal{N}_\varphi^{(2)}(u)} \braket{\varphi^{\otimes N-2}}{-\ii Vu}_{x_{\intinterval{3}{N}}} - \sqrt{2} \varphi \otimes_s \braket{\dot{\varphi}(y)}{\chi_2(y, \cdot)}_y \\
        \dot{\theta}_{2j} &= \frac{(2j)!}{2^j (j-1)!} \frac{\Im\braket{\widetilde{\chi}_{2j}}{\dot{\chi}_2 \otimes \chi_2^{\otimes j-1}}}{\braket{\widetilde{\chi}_{2j}}{\widetilde{\chi}_{2j}}},\quad 2 \leq j \leq n, \\
        \dot{\varphi} &= -\ii (v\ast\abs{\varphi}^2 - \mu_\varphi) \varphi.
    \end{aligned}
    \right.
    \label{eqn: differential system for potential part}
\end{equation}
Due to the reconstruction approach for the excitation functions, only \( \chi_2 \) needs to be integrated in time and not all of them.
Taken separately, each equation from the differential system can easily be integrated, but the time integration becomes more challenging when the differential system is considered as a whole.
This motivates the introduction of a splitting of the differential system: similarly to the operator splitting of Section \ref{section: operator splitting}, we use a combination of basic steps in order to simplify the time integration of the system, by paying the price of an error that depends on the splitting scheme and the time step \( \Delta t \).

Let us define 
\begin{equation}
    \label{eqn: dotchi21 and dotchi22}
    \dot{\chi}_2^{(1)} := \frac{1}{c_2} \sqrt{\binom{N}{2}} \proj_{\mathcal{N}_\varphi^{(2)}(u)} \braket{\varphi^{\otimes N-2}}{-\ii Vu}_{x_{\intinterval{3}{N}}}
    \quad\text{and}\quad \dot{\chi}_2^{(2)} := - \sqrt{2} \varphi \otimes_s \braket{\dot{\varphi}(y)}{\chi_2(y, \cdot)}_y,
\end{equation}
then the equation of motion for \( \chi_2 \) is \( \dot{\chi}_2 = \dot{\chi}_2^{(1)} + \dot{\chi}_2^{(2)} \).
The term \( \dot{\chi}_2^{(2)} \) enforces the orthogonality \eqref{eqn: bogoliubov orthogonality relation} that is imposed in the manifold \( \mathcal{M} \) in the Dirac--Frenkel time-dependent variational principle, and is only due to the motion of \( \varphi \).
For this reason, it would make sense to integrate simultaneously the equation of motion for \( \varphi \) -- which can be integrated exactly -- and the differential equation \( \dot{\chi}_2 = \dot{\chi}_2^{(2)} \).
This differential equation cannot be integrated exactly but because the role of its exact integration is clear, we can approximate it by replacing \( \chi_2 \) with an approximation that satisfies the orthogonality \eqref{eqn: bogoliubov orthogonality relation}.
The rest of the differential system \eqref{eqn: differential system for potential part} is dealt with using a Strang splitting.

Ultimately, the Step 2. in Section \ref{section: operator splitting} is dealt with using a Strang splitting for the differential system \eqref{eqn: differential system for potential part}, made of the following steps:
\begin{enumerate}
    \item Integrate the vector of coefficients \( \mathbf{c} = (c_0, c_2, \dots, c_{2n})^\top \), letting all other quantities constant with respect to time.
    A consequence is that the matrix \( \check{\mathbf{V}} \) is time-independent, and we obtain the unitary evolution \( \mathbf{c}(t) = e^{-\ii \check{\mathbf{V}}(0) t} \mathbf{c}(0) \).
    
    \item Integrate the angles \( \theta_2, \dots, \theta_{2n} \) according to \eqref{eqn: equations of motion angles}, letting all other quantities constant with respect to time.

    \item Integrate the equation for \( {\varphi} \), letting all other quantities constant with respect to time.
    The integration for \( \varphi \) is performed using \eqref{eqn: exact integration potential hartree}.

    \item Replace \( \chi_2 \) by an approximation \( \widehat{\chi}_2 \) that satisfies \eqref{eqn: bogoliubov orthogonality relation} with the current \( \varphi \).
    Details on how to obtain \( \widehat{\chi}_2 \) are given below.

    \item Integrate the excitation function \( \chi_2 \) by integrating the differential equation \( \dot{\chi}_2 = \dot{\chi}_2^{(1)} \), with initial condition \( \widehat{\chi}_2 \).
    The tensorization \eqref{eqn: Bogoliubov tensorization}-\eqref{eqn: Bogoliubov tensorization normalized} is used to reconstruct the functions \( \chi_4, \dots, \chi_{2n} \) from the function \( \chi_2 \) and the angles \( \theta_4, \dots, \theta_{2n} \).
\end{enumerate}

\subsubsection{Preserving the orthogonality}
\label{section: preserving bogoliubov orthogonality after integrating varphi}

Let us detail the approximation \( \widehat{\chi}_2 \) of \( \chi_2 \) that satisfies \eqref{eqn: bogoliubov orthogonality relation}, involved in Step 4. above: after having integrated \( \varphi \) over a time interval \( [0, t] \), the function \( \varphi \) is known at time \( t \) but the excitation function \( \chi_2 \) is still only known at time \( 0 \).
In particular, the orthogonality \eqref{eqn: bogoliubov orthogonality relation} is lost and the results from Sections \ref{section: equations of motion} and \ref{section: computational complexity equations of motion} do not hold anymore.
In order to enforce the orthogonality, we replace \( \chi_2 \) by some \( \widehat{\chi}_2 \) close to \( \chi_2 \) which satisfies \eqref{eqn: bogoliubov orthogonality relation}.
This is done by solving the following optimization problem:
\begin{equation}
    \label{eqn: integration chi22 initial minimization problem}
    \begin{gathered}
        \norm{\widehat{\chi}_2 - \chi_2(0)}{L^2}^2 = \min_{\widehat{\chi}_2 \in L_s^2(\Omega^2)} !\\
        s.t.\quad \begin{aligned}[t]
            &\braket{\varphi(t, y)}{\widehat{\chi}_2(y, \cdot)}_y = 0\\
            &\norm{\widehat{\chi}_2}{L^2} = 1
        \end{aligned}
    \end{gathered}
\end{equation}
The solution to this optimization problem can be easily obtained.
Indeed, a decomposition of \( \chi_2(0) \) depending on \( \varphi(t) \) gives
\begin{equation*}
    \chi_2(0) = a \varphi(t) \otimes \varphi(t) + \varphi(t) \otimes_s \zeta + \kappa,
\end{equation*}
for some coefficient \( a\in \mathbb{C} \) and some functions \( \zeta \in L^2(\Omega) \) such that \( \braket{\varphi(t)}{\zeta} = 0 \) and \( \kappa \in L^2_s(\Omega^2) \) such that \( \braket{\varphi(t, y)}{\kappa(y, \cdot)}_y = 0 \).
Then, 
\begin{equation*}
    \norm{\widehat{\chi}_2 - \chi_2(0)}{L^2}^2 
    = \norm{\widehat{\chi}_2 - a \varphi(t) \otimes \varphi(t) - \varphi(t) \otimes_s \zeta - \kappa}{L^2}^2.
\end{equation*}
Due to the orthogonality \( \braket{\varphi(t, y)}{\widehat{\chi}_2(y, \cdot)}_y = 0 \), the minimizer from \eqref{eqn: integration chi22 initial minimization problem} is the same minimizer as for the following problem:
\begin{equation*}
    \begin{gathered}
        \norm{\widehat{\chi}_2 - \kappa}{L^2}^2 = \min_{\widehat{\chi}_2 \in L_s^2(\Omega^2)} !\\
        s.t.\quad \begin{aligned}[t]
            &\braket{\varphi(t, y)}{\widehat{\chi}_2(y, \cdot)}_y = 0\\
            &\norm{\widehat{\chi}_2}{L^2} = 1
        \end{aligned}
    \end{gathered}
\end{equation*}
The minimum is attained for \( \widehat{\chi}_2 = \kappa / \norm{\kappa}{L^2} \).
Note that \( \kappa = \proj_{\{\varphi(t)\}^\perp \times_s \{\varphi(t)\}^\perp}(\chi_2(0)) \), and due to the diagonal low-rank approximation of \( \chi_2 \) we get 
\begin{equation*}
    \kappa = \sum_{i=1}^r b_{i, i} \proj_{\{\varphi(t)\}^\perp} (\eta_i) \otimes \proj_{\{\varphi(t)\}^\perp} (\eta_i).
\end{equation*}
This expression for \( \kappa \) is a low-rank expression of which the functions do not form an orthonormal family.
In order to have a unitary family for the low-rank expression of \( \widehat{\chi}_2 \), an Autonne--Takagi decomposition can be performed on \( \kappa \).\footnote{In practice, the Autonne--Takagi decomposition of the 2-particle function \( \kappa \) is expensive.
It is better to perform a QR decomposition of the family \( \{\proj_{\{\varphi(t)\}^\perp}(\eta_i) : i=1,\dots, r\} \) to obtain \( \kappa = QR B (QR)^\top \), and to then perform an Autonne--Takagi decomposition on the matrix \( RBR^\top = W\Sigma W^\top \).
The legs of the low-rank approximation of \( \widehat{\chi}_2 \) are given by the matrix \( QW \), and the diagonal matrix of coefficients is \( \Sigma \).
This way, only small or thin matrices are computed and not large square matrices.}
Therefore, the function \( \widehat{\chi}_2 \) is easily obtained from \( \chi_2 \).

\subsection{Algorithm}

The numerical method is summarized in Algorithm \ref{alg: n arbitrary}.

\begin{algorithm}
    \caption{Time-integration of the approximation \( u \) to the Schrödinger equation, over the time interval \( [0, \Delta t] \).}
    \label{alg: n arbitrary}
    \begin{flushleft}
        \textbf{Input:} \( \Delta t>0 \), \( c_0(0), \dots, c_{2n}(0) \in \mathbb{C}^* \), \( \varphi(0) \in \mathbb{C}^{M^d} \), \( B(0) \in \mathbb{C}^{r\times r} \), \( \eta_1(0), \dots, \eta_r(0) \in \mathbb{C}^{M^d} \), \( \theta_4(0), \dots, \theta_{2n}(0) \in \mathbb{R} \), \( r_{\max} \geq 1 \).

        \textbf{Requires:}
        The matrix \( B \) is symmetric with Frobenius norm equal to one, and the family \( \{\varphi(0), \eta_1(0), \dots, \eta_r(0)\} \) is orthonormal.

        \textbf{Steps:}
        \begin{enumerate}
            \item Integrate the free Schrödinger equation \( \ii \dot{u} = T u \), \( T=-\Delta \), over the time interval \( [0, \Delta t/2] \), using \eqref{eqn: free Schrodinger equations on phi and etas}, with initial conditions  \( \varphi(0) \) and \( \chi_2(0) \).
            Let \( \varphi^{T, -}(\Delta t/2) \) and \( \chi_2^{T, -}(\Delta t/2) \) be the updated functions, which are the only quantities modified during this step.
            Let \( \varphi^{V}(0) := \varphi^{T, -}(\Delta t/2) \) and \( \chi_2^{V}(0) := \chi_2^{T, -}(\Delta t/2) \).
            
            \item Integrate the vector of coefficients \( \mathbf{c} \) over the time interval \( [0, \Delta t/2] \), letting \( \varphi=\varphi^{V}(0)  \) and all excitation functions \( \chi_{2j}=\chi_{2j}^{V}(0)  \), \( 1 \leq j \leq n \), constant with respect to time (defined from \( \chi_2^{V}(0) \) using the tensorization \eqref{eqn: Bogoliubov tensorization}-\eqref{eqn: Bogoliubov tensorization normalized}).
            Then the matrix \( \check{\mathbf{V}} \) is time-independent, and \( \mathbf{c}(\Delta t/2) = e^{-\ii\frac{\Delta t}{2} \check{\mathbf{V}}(0)} \mathbf{c}(0) \).

            \item Integrate the angles \( \theta_{2j} \), \( 2 \leq j \leq n \), according to Lemma \ref{lemma: equations of motion angles} over the time interval \([0, \Delta t/2]\), letting other quantities constant with respect to time.

            \item Integrate the function \( \varphi \) over the time interval \( [0, \Delta t/2] \) with initial condition \( \varphi^{V}(0) \), letting all other quantities constant with respect to time.
            The integration of \( \varphi \) is performed using Lemma \ref{lemma: operator splitting - potential hartree equation with phi0}: \( \varphi^V(\Delta t/2) = e^{-\ii \frac{\Delta t}{2}  (v\ast \abs{\varphi(0)}^2 -\mu_{\varphi(0)})(x)} \varphi^{V}(0, x) \).

            \item Replace \( \chi_2^{V}(0) \) by its approximation \( \widehat{\chi}_2^V(0) \) as detailed in Section \ref{section: preserving bogoliubov orthogonality after integrating varphi}.
            The function \( \widehat{\chi}_2^V(0) \) satisfies the orthogonality \eqref{eqn: bogoliubov orthogonality relation} with \( \varphi^V(\Delta t/2) \).
            
            \item Obtain \( \chi_2^V(\Delta t) \) by integrating the differential equation \( \dot{\chi}_2 = \dot{\chi}_2^{(1)} \) with initial condition \( \chi_2^V(0)=\widehat{\chi}_2^V(0) \) over the time interval \( [0, \Delta t] \), letting all other quantities constant with respect to time (including the higher-order excitation functions \( \chi_4^V(0), \dots, \chi_{2n}^V(0) \) obtained using the tensorization \eqref{eqn: Bogoliubov tensorization}-\eqref{eqn: Bogoliubov tensorization normalized}).
            The expression of \( \dot{\chi}_2^{(1)} \) is given in \eqref{eqn: dotchi21 and dotchi22}.
            The integration is performed using the complex symmetric BUG(\( 2r \)) low-rank integrator, and the rank \( r \) is updated to \( \tilde{r} \) according to criterion \eqref{eqn: truncation criterion BUG}.

            \item Repeat step 4. over the time interval \( [\Delta t/2, \Delta t] \) with initial condition \( \varphi^V(\Delta t/2) \) to obtain \( \varphi^V(\Delta t) \).
            Let \( \varphi^{T, +}(\Delta t/2) := \varphi^V(\Delta t) \).

            \item Repeat step 5. to replace \( \chi_2^V(\Delta t) \) by its approximation \( \widehat{\chi}_2^V(\Delta t) \) that satisfies the orthogonality \eqref{eqn: bogoliubov orthogonality relation} with \( \varphi^V(\Delta t) \).
            Let \( \chi_2^{T, +}(\Delta t/2) := \widehat{\chi}_2^V(\Delta t) \).        
            
            \item Repeat step 3. over the time interval \( [\Delta t/2, \Delta t] \) to obtain \( \theta_4(\Delta t), \dots, \theta_{2n}(\Delta t) \).

            \item Repeat step 2. over the time interval \( [\Delta t/2, \Delta t] \) to obtain \( \mathbf{c}(\Delta t) \).
            
            \item Repeat step 1. over the time interval \( [\Delta t/2, \Delta t] \) with initial conditions \( \varphi^{T, +}(\Delta t/2) \) and \( \chi_2^{T, +}(\Delta t/2) \).
            Let \( \varphi(\Delta t) \) and \( \chi_2(\Delta t) \) be the updated functions.
        \end{enumerate}

        \textbf{Output:} \( c_{2k}(\Delta t)\in \mathbb{C} \) for \( 0 \leq k \leq n \), \( \varphi(\Delta t) \in \mathbb{C}^{M^d} \), \( B(\Delta t) \in \mathbb{C}^{\tilde{r}\times \tilde{r}} \), \( \eta_k(\Delta t) \in \mathbb{C}^{M^d} \) for \( 1 \leq k \leq \tilde{r} \), \( \theta_{2k}(\Delta t) \in \mathbb{R} \) for \( 2\leq k \leq n \).
        The family \( \{\varphi(\Delta t), \eta_1(\Delta t), \dots, \eta_{\tilde{r}}(\Delta t) \} \) is orthonormal, and \( B(\Delta t) \) is a symmetric matrix with Frobenius norm one.
    \end{flushleft}
\end{algorithm}

\section{Numerical experiments}
\label{section: numerical experiments}

The Julia code used for the numerical experiments is freely available at the following address: \url{https://gitlab.com/ylehenaff/numerical-bogoliubov-approximation-bosons}.

In the numerical experiments presented, the domain \( \Omega = [-15, 15]^d \) is discretized uniformly along each dimension using \( M = 2^8 = 256 \) points.
The simulations are done on the time interval \( [0, t_{\text{final}}] \) with a time step \( \Delta t \).
We use a rank-adaptive BUG integrator detailed in \cites{cerutiRankadaptiveRobustIntegrator2022,cerutiRobustSecondorderLowrank2024} with the midpoint method used for the time integration of the substeps, and use \( \vartheta \) as the \emph{relative} rank-truncation tolerance: for \( S \) a diagonal matrix of singular values on which we want to perform the rank truncation, the updated rank \( \tilde{r} \) is set as follows:
\begin{equation}
    \label{eqn: truncation criterion BUG}
    \tilde{r} = \min \left(r_{\max}, \min \left\{ r : \sqrt{\sum_{\ell\geq r+1} S_{\ell, \ell}^2} \leq \vartheta \norm{S}{2} \right\} \right),
\end{equation}
where \( r_{\max} \geq 1 \) is some given maximal rank.
We choose for all our numerical experiments \( \vartheta = 10^{-6} \).
A confining potential is considered, changing the Hamiltonian \( H \) from \eqref{eqn: full schrodinger hamiltonian} to
\begin{equation*}
    H\psi(x) = \sum_{j=1}^N \left(-\Delta_{x_j} + U(x_j)\right)\psi(x) + \frac{1}{N-1} \sum_{1\leq j < k \leq N} v(x_j - x_k) \psi(x),
\end{equation*}
where \( U(x) \) is the confining potential.
The equations of motion and all results presented in the previous sections hold when replacing the kinetic operator \( -\Delta_{x_j} \) by the one-particle operator \( T_{x_j} = -\Delta_{x_j} + U(x_j) \).
The ansatz \eqref{eqn: final ansatz u} holds in the presence of a confining potential \cites{mitrouskasBogoliubovCorrectionsTrace2019,lewinFluctuationsHartreeStates2015}.

To assess the efficiency of the numerical method we will plot the relative evolution of the energy \( E_N(t) = \braket{u}{Hu}(t) \) scaled by \( N \), i.e. \( |E_N(t)-E_N(0)|/N \), as well as the evolution of the coefficients \( \abs{c_{2j}}^2 \) and the evolution of the smallest eigenvalue of the matrix \( B \) that appears in the low-rank approximation of \( \chi_2 \).

The numerical experiments we present below are meant to investigate the numerical method itself, and not its relevance for precise physics applications for the reasons laid out in Remark \ref{remark: scope and applicability of this work}.
We will present two different experiments, one using a contact interaction potential as in \cite{alonMulticonfigurationalTimedependentHartree2008}, and the second one using a soft Coulomb potential.
For both of them, the parameter \( n=6 \) is used.
The initial condition is obtained as follows: an imaginary-time relaxation is performed, during which the coefficients \( c_{2j} \) are set constant equal to \( c_{2j} = e^{-j/2} \), and the BUG integrator is allowed to increase the rank of the low-rank approximation of \( \chi_2 \) up to \( r_{\max} \).
The confining potential for the relaxation is \( U_0 \).
The coefficients \( c_{2j} \) are chosen so that the result of this pseudo-relaxation of \( u \) is a fluctuation around the Hartree state \( \varphi^{\otimes N} \) that incorporates some interactions.
For the real-time evolution, the confining potential is switched to \( U \), and all quantities in \eqref{eqn: final ansatz u} are evolved according to Algorithm \ref{alg: n arbitrary}.
The BUG(\( 2r \)) integrator is used with \( r_{\max} = 15 \), and it has to be noted that even though this integrator has been shown to be of order one, it has also been observed in practice to exhibit second-order error in some situations \cite{cerutiRankadaptiveRobustIntegrator2022}.

\subsection{Double well with contact interaction}

We perform the same one-dimensional experiment as in \cite{alonMulticonfigurationalTimedependentHartree2008}, where the interaction potential is a contact interaction given by \( v(x-y) = 2\lambda \delta(x-y) \) with \( \lambda = 9.99 \).
After space discretization, this yields
\begin{equation*}
    v(x) = \begin{cases}
        \frac{2\lambda}{\Delta x} & \text{ if } x \in (-\Delta x/2, \Delta x/2] \\
        0 & \text{else.}
    \end{cases}\quad\text{ with } \lambda = 9.99.
\end{equation*}
where \( \Delta x \) is the step of the uniform grid described in Section \ref{section: time and space discretizations}.

The confining potential for the imaginary-time relaxation is \( U_0(x) = x^2/\sigma^2 + 16 e^{-x^2/(2\sigma^2)} \), \( \sigma=2.6 \).
At time \( t=0 \) the barrier between the two wells is halved, and the confining potential is shifted to the left to get \( U(x) = \frac{(x+2)^2}{\sigma^2} + 8 e^{-(x+2)^2/(2\sigma^2)} \), \( \sigma=2.6 \).
From time \( t=0 \) onwards, the real-time evolution is performed for different values of \( N \in \{10^2, 10^6, 10^{10}\} \), using time steps \( \Delta t \in \{10^{-2}, 10^{-3}\} \).

The numerical experiments were run in parallel on a regular office computer, and the computational time was around 2500 minutes for the most demanding computations.
It has been checked that the computational time is indeed independent of \( N \).
The results are shown in Figure \ref{fig: alon - tfinal 20 - xmax=15 - N=1e2-6-10 d=1 r=15 n=6}.
The top-left plots give the relative evolution of the scaled energy, the center top plots give the evolution of the smallest eigenvalue of the matrix \( B \), and the other plots the evolution of the magnitudes \( \abs{c_{2j}}^2 \) for \( j=0, \dots, n \).

Let us discuss the conservation of energy.
We recall that the main non energy-conserving part is due to imposing \( \varphi \) in the variational principle as the solution of the Hartree equation \eqref{eqn: hartree equation}, which is an equation obtained in the limit \( N\to\infty \) \cite{benedikterEffectiveEvolutionEquations2016}*{p.8}.
For \( N\to\infty \), we expect that imposing this evolution for \( \varphi \) has no effect onto the energy conservation because of the validity of the argument leading to the Hartree equation -- in other words, if the equation of motion for \( \varphi \) were to be obtained via a variational argument, we would expect to recover the Hartree equation in the large \( N \) limit.
Hence we would expect no energy loss when imposing the Hartree evolution for \( \varphi \) in the case \( N\to\infty \).
However, for finite \( N \), we expect a loss in the energy conservation that depends on \( N \) and decreases as \( N \) increases, let us call it the \( N \)-error.
Moreover, the loss of energy conservation may also be due to the time step \( \Delta t \), leading to a competition between these two sources of error.
We can see this phenomenon by comparing the energy conservation for \( \Delta t\in \{10^{-2}, 10^{-3}\} \) in Figure \ref{fig: alon - tfinal 20 - xmax=15 - N=1e2-6-10 d=1 r=15 n=6}: for small \( N \), the loss of the energy conservation is mainly due to the \( N \)-error, and the error coming from the integration scheme (depending on \( \Delta t \)) is negligible compared to it, for both large and small values of \( \Delta t \).
This leads to an energy conservation for \( N=10^2 \) that is qualitatively independent of \( \Delta t\in \{10^{-2}, 10^{-3}\} \).
For intermediate values of \( N \), we expect the \( \Delta t \)-error to be dominant for large \( \Delta t \), and negligible compared to the \( N \)-error for small \( \Delta t \).
This also explains the behavior observed in Figure \ref{fig: alon - tfinal 20 - xmax=15 - N=1e2-6-10 d=1 r=15 n=6} for \( N=10^6 \), where decreasing \( \Delta t \) does improve the energy conservation but not up to second-order as expected.
For large values of \( N \), we expect the \( N \)-error to be negligible and thus the main error is the \( \Delta t \)-error.
In this situation we can observe an energy conservation that is second-order in time.
The competition between the \( \Delta t \)-error and the \( N \)-error also explains why the numerical results for \( \Delta t = 10^{-2} \) yield results that are quantitatively very similar for \( N = 10^6 \) and \( N=10^{10} \) (the \( \Delta t \)-error dominates), but are different from the results with \( N=10^2 \) (the \( N \)-error dominates).

The smallest eigenvalue of \( B \) is relatively robust to the value of \( N \), and is slightly smaller for smaller values of \( \Delta t \).
This behavior is also expected: the function \( \chi_2 \) encodes interactions in the system, but the size of the interaction is parameterized by \( \lambda \) which is independent of \( N \).
Thus, the low-rank approximation of \( \chi_2 \) is expected to be qualitatively similar for all values of \( N \), and the stepsize \( \Delta t \) simply allows for more precise computations of \( \chi_2 \).

Finally, regarding the coefficients \( \abs{c_{2j}}^2 \), we can observe that the relative order \( \abs{c_2} > \abs{c_4} > \dots > \abs{c_{2n}} \) imposed at time \( t=0 \) is approximately preserved.
The values of \( N \) have little influence on the magnitudes \( \abs{c_{2j}}^2 \).
The overall increase of all magnitudes \( \abs{c_2}^2, \dots, \abs{c_{2n}}^2 \) accompanied by a sharp decrease of \( \abs{c_0}^2 \) shows in particular that the Hartree product state is a very poor approximation of the solution, and that the ansatz \eqref{eqn: final ansatz u} allows us to recover a better approximation to the solution.

\setlength{\widthfigure}{0.95\linewidth} 

\renewcommand{\currentImgsLargeDt}{dt=0.01_tfinal=20.0_N=1e2-1e6-1e10_M=256_dimOmega=1_xmin=-15.0_xmax=15.0_n=6_r=15_U=alon2008confining_V=alon2008interaction_order-BUG=1}
\renewcommand{\currentImgsSmallDt}{dt=0.001_tfinal=20.0_N=1e2-1e6-1e10_M=256_dimOmega=1_xmin=-15.0_xmax=15.0_n=6_r=15_U=alon2008confining_V=alon2008interaction_order-BUG=1}
\begin{figure}
    \centering
    \begin{subfigure}[b]{\textwidth}
        \centering
        \includegraphics[width=\widthfigure]{\folderImages/\currentImgsLargeDt/energy_and_abs2c.pdf}
        \caption{\( \Delta t = 10^{-2} \)}
    \end{subfigure}
    \begin{subfigure}[b]{\textwidth}
        \centering
        \includegraphics[width=\widthfigure]{\folderImages/\currentImgsSmallDt/energy_and_abs2c.pdf}
        \caption{\( \Delta t = 10^{-3} \)}
    \end{subfigure}
    \caption{Double well confinement, contact interaction, \( d=1 \), \( n=6 \). We obtained \( r_{\text{final}}=r_{\max}=15 \).
    \textbf{Left to right, top to bottom}: relative energy conservation, smallest eigenvalue of \( B \), magnitudes \( \abs{c_{2j}}^2 \) for \( j=0, \dots, n \). The scale of the \( y \)-axis for the coefficients \( \abs{c_{2j}}^2 \) is indicated above each plot.}
    \label{fig: alon - tfinal 20 - xmax=15 - N=1e2-6-10 d=1 r=15 n=6}
\end{figure}

\subsection{Harmonic potential with Coulomb interaction}

For the second experiment, the interactions are of soft Coulomb-type: \( v(x) = 1/\sqrt{\abs{x}^2+\varepsilon} \), \( \varepsilon=10^{-3} \).
For the confining potentials, we choose a double-well \( U_0(x) = \abs{x}^2 /2 + 10 e^{-\frac{1}{2}\abs{x}^2} \) for the imaginary-time relaxation and a harmonic confining potential for the real-time evolution \( U(x) = \abs{x}^2/2 \).
The results are shown in Figure \ref{fig: coulomb potential - tfinal 20 - xmax=15 - N=1e2-6-10 d=1 r=15 n=6}.

Similarly to the previous experiment, there is a competition in the conservation of energy between the \( N \)-error and the \( \Delta t \)-error, and when the \( \Delta t \)-error dominates the method is seemingly of second-order in time.
The relative order of coefficients is also approximately preserved, and shows a faster decrease in the magnitudes \( \abs{c_{2j}}^2 \) with \( j \) compared to the previous experiment.
In contrast to the previous experiment, \( \abs{c_0}^2 \) does not decrease rapidly, meaning that the Hartree product state remains in this situation a reasonable approximation to the solution.
The overall increase of the magnitude \( \abs{c_2}^2 \) is also rather slow compared to the previous experiment.
Finally, the smallest eigenvalue of the matrix \( B \) is again robust to the value of \( N \).

\renewcommand{\currentImgsLargeDt}{dt=0.01_tfinal=20.0_N=1e2-1e6-1e10_M=256_dimOmega=1_xmin=-15.0_xmax=15.0_n=6_r=15_U=harmonic_V=coulomb_order-BUG=1}
\renewcommand{\currentImgsSmallDt}{dt=0.001_tfinal=20.0_N=1e2-1e6-1e10_M=256_dimOmega=1_xmin=-15.0_xmax=15.0_n=6_r=15_U=harmonic_V=coulomb_order-BUG=1}
\begin{figure}
    \centering
    \begin{subfigure}[b]{\textwidth}
        \centering
        \includegraphics[width=\widthfigure]{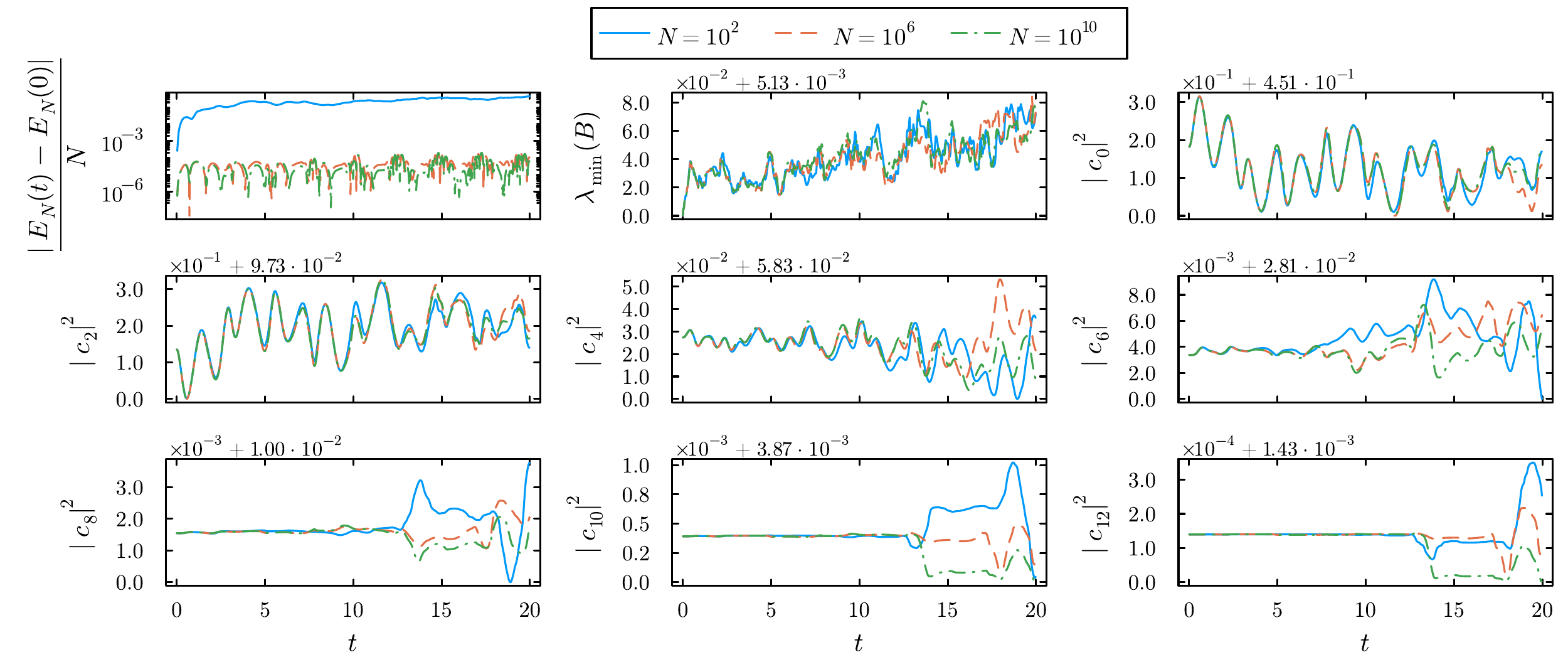}
        \caption{\( \Delta t = 10^{-2} \)}
    \end{subfigure}
    \begin{subfigure}[b]{\textwidth}
        \centering
        \includegraphics[width=\widthfigure]{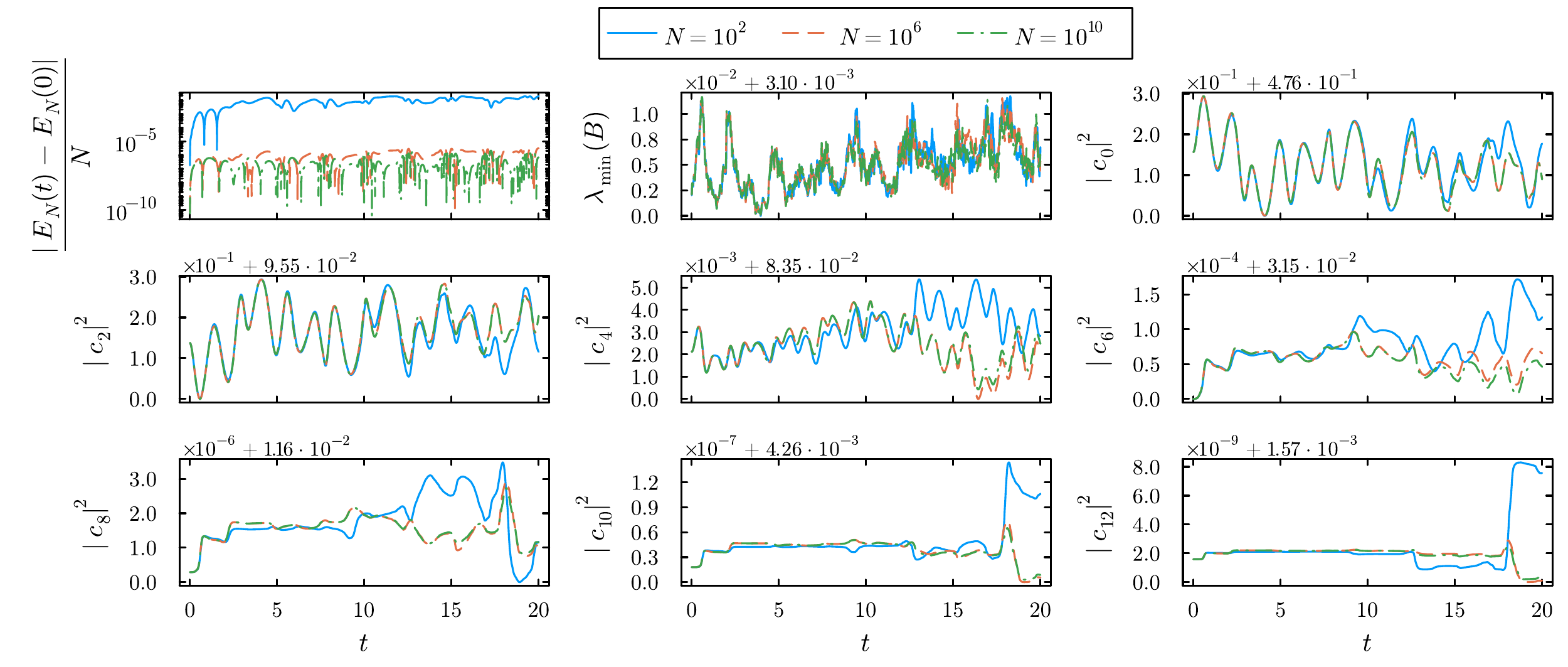}
        \caption{\( \Delta t = 10^{-3} \)}
    \end{subfigure}
    \caption{Harmonic confinement, Coulomb interaction, \( d=1 \), \( n=6 \). We obtained \( r_{\text{final}}=r_{\max}=15 \).
    \textbf{Left to right, top to bottom}: relative energy conservation, smallest eigenvalue of \( B \), magnitudes \( \abs{c_{2j}}^2 \) for \( j=0, \dots, n \). The scale of the \( y \)-axis for the coefficients \( \abs{c_{2j}}^2 \) is indicated above each plot.}
    \label{fig: coulomb potential - tfinal 20 - xmax=15 - N=1e2-6-10 d=1 r=15 n=6}
\end{figure}

\section{Conclusion}
\label{section: conclusion}

In this work we have presented a numerical method for the approximation of the solution to the Schrödinger equation with a two-body interaction potential, in the bosonic case and in the mean-field scaling regime.
The approximation is based on the Bogoliubov theory, and it is particularly suited to study excitations of the solution around the Hartree product state.

The main feature of the numerical ansatz is that it approximates the wave function via the condensate function \( \varphi \) and only a few excitation functions \( \chi_{2k}, k\geq 1 \).
As is shown here, this implies that the full condensate never needs to be explicitly formed, and that one can obtain a numerical method whose computational effort is uniformly bounded with respect to the number of particles considered.
The time evolution of the condensate function \( \varphi \) is imposed by the Hartree equation, and only the coefficients \( c_{2k} \) and the excitation function \( \chi_{2k} \), \( k\geq 1 \), need to be approximated.

After a splitting of the Hamiltonian operator \( H=-\Delta+V \), the Laplacian part can be solved exactly and only the potential part has to be approximated numerically.
This is based on the Dirac--Frenkel time-dependent variational principle, which in this case yields equations of motion that are smooth uniformly in \( N \). The time discretization uses further splittings of the equations of motion.
The excitation functions \( \chi_{2k} \), \( k\geq 1 \), depend on more than one variable, and they are represented using a rank-\( r \) approximation, where \( r \) is updated dynamically during the time evolution of the solution using a rank-adaptive BUG integrator.

The resulting algorithm has a computational cost independent of the number of particles, which sets it apart from the existing literature on numerical methods for many-body quantum dynamics.
In numerical experiments the algorithm shows the accuracy of a second-order scheme and satisfying conservation properties.

\section*{Acknowledgments}

All three authors were funded by the Deutsche Forschungsgemeinschaft (DFG, German Research Foundation) -- TRR 352 -- Project-ID 470903074.
YLH would also like to thank Georgios Vretinaris for valuable discussions regarding the implementation of the algorithm in Julia, in particular for the BUG integrator.

\section*{AI disclosure}

The authors used the AI assistant Vibe (Mistral; GLM model by Z.ai) for proofreading and copyediting support.
No AI-based tool has been used to implement the algorithm and the numerical experiments apart, possibly, from the external libraries for which it is difficult to check.

\appendix

\section{Technical lemmas}
\label{appendix: technical lemma}

We need some technical lemmas throughout this work, and we collect some of them in this section.
We do not claim the novelty of these results, but we did not find a reference for them.

\begin{lemma}[Associativity of symmetric tensor product]
    \label{lemma: associativity symmetric tensor product}
    Let \( i, j, k \in \mathbb{N} \), and \( f \in L^2_s(\Omega^i) \), \( g \in L^2_s(\Omega^j) \), \( h \in L^2_s(\Omega^k) \).
    Then,
    \begin{equation*}
        (f \otimes_s g) \otimes_s h = f \otimes_s (g \otimes_s h).
    \end{equation*}
\end{lemma}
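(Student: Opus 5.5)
The plan is to prove associativity by showing that both sides equal the same normalized sum over $S_{i+j+k}$,
\begin{equation*}
    \frac{1}{\sqrt{i!\,j!\,k!\,(i+j+k)!}} \sum_{\sigma \in S_{i+j+k}} (f\otimes g\otimes h)(x_\sigma).
\end{equation*}
The right-hand side of the claim will then follow by the same computation with the roles of the blocks exchanged.

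We expand the left-hand side with the definition \eqref{eqn: definition symmetric tensor product}. Set $F := f\otimes_s g \in L^2_s(\Omega^{i+j})$. Then
\begin{equation*}
    (F\otimes_s h)(x) = \frac{1}{\mathsf{K}_{i+j}^{i+j+k}} \sum_{\sigma\in S_{i+j+k}} F(x_{\sigma(\intinterval{1}{i+j})})\, h(x_{\sigma(\intinterval{i+j+1}{i+j+k})}).
\end{equation*}
Next we substitute the definition of $F$, which gives a double sum over $\sigma\in S_{i+j+k}$ and $\rho\in S_{i+j}$. The inner term is $f\otimes g$ evaluated at the variables $x_{\sigma(\rho(1))},\dots,x_{\sigma(\rho(i+j))}$. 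Define $\sigma' := \sigma\circ(\rho\oplus \mathrm{id}_k)$, where $\rho\oplus\mathrm{id}_k$ permutes the first $i+j$ slots and fixes the last $k$. The summand becomes $(f\otimes g\otimes h)(x_{\sigma'})$.

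For fixed $\rho$, the map $\sigma\mapsto\sigma'$ is a bijection of $S_{i+j+k}$. The double sum therefore equals $(i+j)!\sum_{\sigma'}(f\otimes g\otimes h)(x_{\sigma'})$. The prefactor is $(i+j)!/(\mathsf{K}_{i+j}^{i+j+k}\mathsf{K}_i^{i+j})$. Using $\mathsf{K}_a^{a+b}=\sqrt{a!\,b!\,(a+b)!}$, this reduces to $1/\sqrt{i!\,j!\,k!\,(i+j+k)!}$.

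The right-hand side $f\otimes_s(g\otimes_s h)$ is handled identically, with the inner permutation now acting on the last $j+k$ slots. It yields the same symmetric expression, so the two sides agree.

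The one point needing care is the convention of \eqref{eqn: definition symmetric tensor product}: the permutation is applied to the variable indices, $x_\sigma=(x_{\sigma(1)},\dots)$. The composition must be written in the correct order, $\sigma\circ(\rho\oplus\mathrm{id})$ and not $(\rho\oplus\mathrm{id})\circ\sigma$, to get the reindexing right. The conclusion does not depend on this choice, because right or left multiplication by a fixed permutation is a bijection of $S_{i+j+k}$ in either case.

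Symmetry of $f,g,h$ is not actually needed for this computation. It only guarantees that the objects lie in the stated spaces.
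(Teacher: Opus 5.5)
Your proposal is correct and follows essentially the same route as the paper: you expand both sides into a single sum over $S_{i+j+k}$ of $(f\otimes g\otimes h)(x_\sigma)$ and check that the prefactors $(i+j)!/(\mathsf{K}_{i+j}^{i+j+k}\mathsf{K}_i^{i+j})$ and $(j+k)!/(\mathsf{K}_i^{i+j+k}\mathsf{K}_j^{j+k})$ coincide. Your reindexing $\sigma\mapsto\sigma\circ(\rho\oplus\mathrm{id}_k)$ is a cleaner version of the paper's splitting of $\tau$ into $(\tau_1,\tau_2)$, and you are also right that the symmetry of $f,g,h$ is never used.
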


\begin{proof}
    We have
    \begin{align*}
        ((f \otimes_s g) \otimes_s h)(x)
        &= \frac{1}{\mathsf{K}^{k+i+j}_k} \sum_{\tau\in S_{i+j+k}} (f\otimes_s g)(x_{\tau(\intinterval{k+1}{i+j+k})}) h(x_{\tau(\intinterval{1}{k})}) \\
        &= \frac{1}{\mathsf{K}^{k+i+j}_k \mathsf{K}^{i+j}_j} \sum_{\tau\in S_{i+j+k}} \sum_{\sigma \in S_{\tau(\intinterval{k+1}{i+j+k})}} f(x_{\sigma(\intinterval{1}{i})}) g(x_{\sigma(\intinterval{i+1}{i+j})}) h(x_{\tau(\intinterval{1}{k})}) \\
        &= \frac{1}{\mathsf{K}^{k+i+j}_k \mathsf{K}^{i+j}_j} \sum_{\tau_1 \in S_{i+j+k}^k} \sum_{\tau_2 \in S_{i+j+k \setminus \tau_1}} \sum_{\sigma \in S_{\tau_2}} f(x_{\sigma(\intinterval{1}{i})}) g(x_{\sigma(\intinterval{i+1}{i+j})}) h(x_{\tau_1}) \\
        &= \frac{(i+j)!}{\mathsf{K}^{k+i+j}_k \mathsf{K}^{i+j}_j} \sum_{\tau_1 \in S_{i+j+k}^k} \sum_{\sigma \in S_{i+j+k \setminus \tau_1}} f(x_{\sigma(\intinterval{1}{i})}) g(x_{\sigma(\intinterval{i+1}{i+j})}) h(x_{\tau_1}) \\
        &= \frac{(i+j)!}{\mathsf{K}^{k+i+j}_k \mathsf{K}^{i+j}_j} \sum_{\tau \in S_{i+j+k}} f(x_{\tau(\intinterval{k+1}{k+i})}) g(x_{\tau(\intinterval{k+i+1}{k+i+j})}) h(x_{\tau(\intinterval{1}{k})}).
    \end{align*}
    Similarly, 
    \begin{equation*}
        (f\otimes_s (g\otimes_s h))(x)
        = \frac{(j+k)!}{\mathsf{K}_i^{i+j+k} \mathsf{K}^{j+k}_j} \sum_{\tau \in S_{i+j+k}} f(x_{\tau(\intinterval{k+1}{k+i})}) g(x_{\tau(\intinterval{k+i+1}{k+i+j})}) h(x_{\tau(\intinterval{1}{k})}).
    \end{equation*}
    Thus,
    \begin{align*}
        ((f \otimes_s g) \otimes_s h)(x)
        &= \frac{(i+j)!}{\mathsf{K}^{k+i+j}_k \mathsf{K}^{i+j}_j} \frac{\mathsf{K}_i^{i+j+k} \mathsf{K}^{j+k}_j}{(j+k)!} (f\otimes_s (g\otimes_s h))(x) \\
        &= \frac{(i+j)!}{\sqrt{(i+j+k)!(i+j)!k! (i+j)!i!j!}} \frac{\sqrt{(i+j+k)!(j+k)!i! (j+k)!j!k!}}{(j+k)!} (f\otimes_s (g\otimes_s h))(x) \\
        &= (f\otimes_s (g\otimes_s h))(x).
    \end{align*}
\end{proof}

\begin{lemma}
    \label{lemma: braket hf against v rewritten as h against fv}
    Let \( i,j \in \mathbb{N} \), and \( f\in L^2(\mathbb{R}^i) \), \( g\in L^2(\mathbb{R}^{j}) \), \( h\in L^2_s(\mathbb{R}^{i+j}) \). Then,
    \begin{equation*}
        \braket{f\otimes_s g}{h} = \sqrt{\binom{i+j}{j}} \braket{f \otimes g}{h}.
    \end{equation*}
\end{lemma}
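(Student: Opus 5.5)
The plan is to expand the symmetric tensor product using its definition \eqref{eqn: definition symmetric tensor product} and then use the symmetry of \( h \) to collapse the sum over permutations. Write
\begin{equation*}
    \braket{f\otimes_s g}{h} = \frac{1}{\mathsf{K}_i^{i+j}} \sum_{\sigma\in S_{i+j}} \int \conj{f(x_{\sigma(\intinterval{1}{i})})\, g(x_{\sigma(\intinterval{i+1}{i+j})})}\, h(x)\, dx,
\end{equation*}
where we only need linearity of the integral and finiteness of each summand. Each summand is finite by Cauchy--Schwarz, because \( f\otimes g \in L^2(\mathbb{R}^{i+j}) \).

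In each summand we then perform the change of variables \( y = x_\sigma \), that is \( y_k = x_{\sigma(k)} \). This map is a measure-preserving permutation of coordinates. After the substitution the integrand becomes \( \conj{f(y_{\intinterval{1}{i}})\, g(y_{\intinterval{i+1}{i+j}})}\, h(y_{\sigma^{-1}}) \).

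By the symmetry of \( h \in L^2_s(\mathbb{R}^{i+j}) \) we have \( h(y_{\sigma^{-1}}) = h(y) \). Hence every one of the \( (i+j)! \) summands equals \( \braket{f\otimes g}{h} \), and
\begin{equation*}
    \braket{f\otimes_s g}{h} = \frac{(i+j)!}{\sqrt{i!\,j!\,(i+j)!}}\, \braket{f\otimes g}{h} = \sqrt{\frac{(i+j)!}{i!\,j!}}\, \braket{f\otimes g}{h} = \sqrt{\binom{i+j}{j}}\, \braket{f\otimes g}{h}.
\end{equation*}

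Notably, neither \( f \) nor \( g \) needs to be symmetric, since only the symmetry of \( h \) is used. The only delicate point is the bookkeeping in the change of variables, namely checking that the permutation lands on \( h \) and not on \( f\otimes g \). This is handled cleanly by substituting \( y = x_\sigma \) as above. I do not expect any real obstacle beyond getting the normalization constant \( \mathsf{K}_i^{i+j} = \sqrt{i!\,j!\,(i+j)!} \) right.
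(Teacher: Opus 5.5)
Your proposal is correct and follows essentially the same route as the paper: expand $f\otimes_s g$ by its definition, use the symmetry of $h$ to see that each of the $(i+j)!$ summands equals $\braket{f\otimes g}{h}$, and divide by $\mathsf{K}_i^{i+j}=\sqrt{i!\,j!\,(i+j)!}$. Your explicit substitution $y=x_\sigma$ just spells out the step the paper compresses into ``by symmetry of $h$''.
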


\begin{proof}
    By symmetry of \( h \),
    \begin{align*}
        \braket{f\otimes_s g}{h}
        &= \frac{1}{\mathsf{K}^{i+j}_j} \sum_{\sigma\in S_{i+j}} \braket{f(x_{\sigma(\intinterval{1}{i})}) g(x_{\sigma(\intinterval{i+1}{i+j})})}{h(x)} 
            = \frac{1}{\mathsf{K}^{i+j}_j} \sum_{\sigma\in S_{i+j}} \braket{f \otimes g}{h}
            = \sqrt{\binom{i+j}{j}} \braket{f \otimes g}{h}.
    \end{align*}
\end{proof}

\begin{lemma}
    \label{lemma: symmetric tensor product of a function and itself}
    Let \( f \in L^2(\Omega) \), \( p \in \mathbb{N} \).
    Then
    \begin{equation*}
        (f^{\otimes p} \otimes_s f) = \sqrt{p+1} f^{\otimes p+1}.
    \end{equation*}
\end{lemma}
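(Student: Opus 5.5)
The plan is to compute directly from the definition \eqref{eqn: definition symmetric tensor product} of the symmetric tensor product, with \( i = p \) and \( j = 1 \). Since \( \mathsf{K}_p^{p+1} = \sqrt{p!\,1!\,(p+1)!} \), we have
\begin{equation*}
    (f^{\otimes p} \otimes_s f)(x_1, \dots, x_{p+1}) = \frac{1}{\sqrt{p!\,(p+1)!}} \sum_{\sigma \in S_{p+1}} f^{\otimes p}(x_{\sigma(1)}, \dots, x_{\sigma(p)})\, f(x_{\sigma(p+1)}).
\end{equation*}

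The key observation is that each summand equals \( \prod_{k=1}^{p} f(x_{\sigma(k)}) \cdot f(x_{\sigma(p+1)}) = \prod_{l=1}^{p+1} f(x_l) = f^{\otimes p+1}(x) \). This holds because \( \sigma \) is a bijection of \( \intinterval{1}{p+1} \) and multiplication of scalars is commutative. Hence every one of the \( (p+1)! \) summands is identical.

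The sum therefore collapses to \( (p+1)!\, f^{\otimes p+1}(x) \). It remains to simplify the constant:
\begin{equation*}
    \frac{(p+1)!}{\sqrt{p!\,(p+1)!}} = \sqrt{\frac{(p+1)!}{p!}} = \sqrt{p+1},
\end{equation*}
which gives the claim.

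There is no real obstacle here. The only care needed is to use the correct normalization constant \( \mathsf{K}_p^{p+1} \), and to note that the case \( p = 0 \) is consistent with the convention \( f^{\otimes 0} = 1 \): it reads \( 1 \otimes_s f = f \).
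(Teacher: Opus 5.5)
Your proof is correct and takes the same route as the paper: expand the definition of \( \otimes_s \) with \( \mathsf{K}_p^{p+1} = \sqrt{p!\,(p+1)!} \), observe that each of the \( (p+1)! \) summands equals \( f^{\otimes p+1} \), and simplify the constant. Your count is the accurate one: the paper's displayed intermediate factor reads \( (p+1)/\mathsf{K}_1^{p+1} \), whereas \( (p+1)!/\mathsf{K}_1^{p+1} \) is what actually gives \( \sqrt{p+1} \).
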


\begin{proof}
    We have
    \begin{equation*}
        f^{\otimes p} \otimes_s f
        = \frac{1}{\mathsf{K}^{p+1}_1} \sum_{\sigma\in S_{p+1}} f^{\otimes p} (x_{\sigma(\intinterval{1}{p})}) f(x_{\sigma(p+1)})
        = \frac{(p+1)}{\mathsf{K}^{p+1}_1} f^{\otimes p+1} = \sqrt{p+1} f^{\otimes p+1}.
    \end{equation*}
\end{proof}

\begin{lemma}
    \label{lemma: Derivative of k-fold tensor product}
    Let \( \varphi=\varphi(t,\cdot) \in L^2(\Omega) \) a differentiable function with respect to time \( t \), and \( k \geq 1 \).
    Then 
    \begin{equation}
        \frac{\partial}{\partial t} \left( \varphi^{\otimes k} \right) = \sqrt{k} \varphi^{\otimes k-1} \otimes_s \dot{\varphi}.
    \end{equation}
\end{lemma}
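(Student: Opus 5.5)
The plan is to differentiate the full tensor product by the Leibniz rule and then recognise the resulting sum as a symmetric tensor product using the normalization in \eqref{eqn: definition symmetric tensor product}. Since \( \varphi^{\otimes k}(x_1,\dots,x_k) = \prod_{i=1}^k \varphi(t,x_i) \), the product rule (valid for \( \varphi \in C^1 \) with values in \( L^2 \), with the derivative understood in \( L^2(\Omega^k) \)) gives
\begin{equation*}
    \frac{\partial}{\partial t}\left(\varphi^{\otimes k}\right)(x) = \sum_{i=1}^k \dot{\varphi}(x_i) \prod_{l\neq i} \varphi(x_l).
\end{equation*}
The differentiability in \( L^2(\Omega^k) \) follows from the usual telescoping estimate on \( \varphi(t+h)^{\otimes k} - \varphi(t)^{\otimes k} \), because \( \norm{f\otimes g}{} = \norm{f}{}\norm{g}{} \).

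Next I would expand the right-hand side using the definition with \( i = k-1 \) and \( j = 1 \), so that \( \mathsf{K}^{k}_{k-1} = \sqrt{(k-1)!\,1!\,k!} \):
\begin{equation*}
    \left(\varphi^{\otimes k-1}\otimes_s \dot{\varphi}\right)(x) = \frac{1}{\sqrt{(k-1)!\,k!}} \sum_{\sigma\in S_k} \varphi^{\otimes k-1}\left(x_{\sigma(\intinterval{1}{k-1})}\right)\dot{\varphi}\left(x_{\sigma(k)}\right).
\end{equation*}
Each summand depends only on \( i = \sigma(k) \), and equals \( \dot\varphi(x_i)\prod_{l\neq i}\varphi(x_l) \). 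For each fixed \( i \), exactly \( (k-1)! \) permutations have \( \sigma(k)=i \). Hence the sum equals \( (k-1)! \) times the Leibniz sum, and the prefactor becomes
\begin{equation*}
    \frac{(k-1)!}{\sqrt{(k-1)!\,k!}} = \frac{1}{\sqrt{k}}.
\end{equation*}
This gives \( \varphi^{\otimes k-1}\otimes_s\dot\varphi = k^{-1/2}\,\partial_t(\varphi^{\otimes k}) \), which is the claim after multiplying by \( \sqrt{k} \).

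There is no real obstacle in this argument. The only point requiring care is tracking the normalization constant \( \mathsf{K} \) correctly; the same counting already appears in Lemma \ref{lemma: symmetric tensor product of a function and itself}, which serves as a consistency check. As a second check, the case \( k=1 \) reads \( \dot\varphi = \dot\varphi \), since the symmetric product with the empty tensor \( \varphi^{\otimes 0} = 1 \) is trivial.
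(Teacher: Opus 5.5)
Your proof is correct and follows essentially the same route as the paper: you expand \( \partial_t(\varphi^{\otimes k}) \) by the Leibniz rule, then expand \( \varphi^{\otimes k-1}\otimes_s\dot\varphi \) and use that exactly \( (k-1)! \) permutations share each value of \( \sigma(k) \), giving the prefactor \( (k-1)!/\mathsf{K}^k_1 = 1/\sqrt{k} \). Your remark on differentiability in \( L^2(\Omega^k) \) via the telescoping estimate is a small rigor point that the paper leaves implicit.
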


\begin{proof}
    On the one hand, we have
    \begin{align*}
        \frac{\partial}{\partial t} \left( \varphi^{\otimes k} \right) 
        = \dot{\varphi}\otimes \varphi \otimes \cdots \otimes \varphi + \cdots + \varphi\otimes \cdots \varphi \otimes \dot{\varphi}
        = \sum_{j=1}^k \dot{\varphi}(x_j) \varphi^{\otimes k-1} (x\setminus x_j).
    \end{align*}
    On the other hand, 
    \begin{equation*}
        \varphi^{\otimes k-1} \otimes_s \dot{\varphi}
        = \frac{1}{\mathsf{K}^k_1} \sum_{\sigma\in S_k} \varphi^{\otimes k-1} (x_{\sigma(\intinterval{1}{k-1})}) \dot{\varphi}(x_{\sigma(k)})
        = \frac{1}{\sqrt{k}} \sum_{j=1}^k \dot{\varphi}(x_j) \varphi^{\otimes k-1} (x\setminus x_j),
    \end{equation*}
    which yields the result.
\end{proof}

\section{Components of the tridiagonal Hermitian matrix \texorpdfstring{\( \mathbf{H} \)}{H}}
\label{appendix: components bfH}

Write \( \mathbf{H} = \mathbf{T} + \mathbf{V} \), where \( \mathbf{T} \) is the matrix associated to the kinetic operator and \( \mathbf{V} \) the matrix associated to the potential operator.
By the Bogoliubov orthogonality, the matrix \( \mathbf{T} \) is diagonal and by Lemmas \ref{lemma: reducing number of effective kinetic operators} and \ref{lemma: braket hf against v rewritten as h against fv}:
\begin{align*}
    \mathbf{T}_{j,j} 
    &= (N-2j) \sqrt{\binom{N}{2j}} \braket{\varphi^{\otimes N-2j} \otimes \chi_{2j}}{-\Delta_{x_1} \varphi^{\otimes N-2j} \otimes_s \chi_{2j}} + 2j \sqrt{\binom{N}{2j}} \braket{\varphi^{\otimes N-2j} \otimes \chi_{2j}}{-\Delta_{x_{N-2j+1}} \varphi^{\otimes N-2j} \otimes_s \chi_{2j}} \\
    &= (N-2j) \braket{\varphi}{-\Delta \varphi} + 2j \braket{\chi_{2j}}{-\Delta_{x_{N-2j+1}}\chi_{2j}}.
\end{align*}
By Lemmas \ref{lemma: reducing number of effective potential operators} and \ref{lemma: braket hf against v rewritten as h against fv}, we have
\begin{equation*}
    \mathbf{V}_{j,k} 
    = \begin{aligned}[t]
        &\frac{(N-2j)(N-2j-1)}{2(N-1)} \sqrt{\binom{N}{2j}} \braket{\varphi^{\otimes N-2j} \otimes \chi_{2j}}{v(x_1-x_2) \varphi^{\otimes N-2k} \otimes_s \chi_{2k}} \\
        &+\frac{2j(N-2j)}{N-1} \sqrt{\binom{N}{2j}} \braket{\varphi^{\otimes N-2j} \otimes \chi_{2j}}{v(x_1-x_{N-2j+1}) \varphi^{\otimes N-2k} \otimes_s \chi_{2k}} \\
        &+\frac{2j(2j-1)}{2(N-1)} \sqrt{\binom{N}{2j}} \braket{\varphi^{\otimes N-2j} \otimes \chi_{2j}}{v(x_{N-2j+1}-x_{N-2j+2}) \varphi^{\otimes N-2k} \otimes_s \chi_{2k}}.
    \end{aligned}
\end{equation*}

We compute the necessary brakets: first, when \( k=j \),
\begin{multline*}
    \braket{\varphi^{\otimes N-2j} \otimes \chi_{2j}}{v(x_1-x_2) \varphi^{\otimes N-2j} \otimes_s \chi_{2j}} \\
    \begin{aligned}
        &= \frac{1}{\mathsf{K}^N_{2j}} \sum_{\substack{\tau\in S_N\\\tau(\intinterval{1}{2j})=_s \intinterval{N-2j+1}{N}}} \braket{\varphi^{\otimes N-2j} \otimes \chi_{2j}}{v(x_1-x_2) \varphi^{\otimes N-2j}(x_{\tau(\intinterval{2j+1}{N})}) \chi_{2j}(x_{\tau(\intinterval{1}{2j})})} \\
        &= \frac{(2j)!(N-2j)!}{\mathsf{K}^N_{2j}} \braket{\varphi^2}{v(x_1-x_2) \varphi^2}
            = \frac{1}{\sqrt{\binom{N}{2j}}} \braket{\varphi^2}{v(x_1-x_2) \varphi^2},
    \end{aligned}
\end{multline*}
\begin{multline*}
    \braket{\varphi^{\otimes N-2j} \otimes \chi_{2j}}{v(x_1-x_{N-2j+1}) \varphi^{\otimes N-2j} \otimes_s \chi_{2j}} \\
    = \frac{1}{\sqrt{\binom{N}{2j}}} \Big( \braket{\chi_{2j} \otimes \varphi}{v(x_1-x_{2j+1})} \varphi \otimes \chi_{2j} + \braket{\chi_{2j} \otimes \varphi}{v(x_1 - x_{2j+1}) \chi_{2j} \otimes \varphi} \Big),
\end{multline*}
and
\begin{align*}
    &\braket{\varphi^{\otimes N-2j} \otimes \chi_{2j}}{v(x_{N-2j+1} - x_{N-2j+2}) \varphi^{\otimes N-2j} \otimes_s \chi_{2j}}
    = \frac{1}{\sqrt{\binom{N}{2j}}} \braket{\chi_{2j}}{v(x_1-x_2) \chi_{2j}}.
\end{align*}
Thus, 
\begin{equation*}
    \mathbf{V}_{j,j} 
    = \begin{aligned}[t]
        &\frac{(N-2j)(N-2j-1)}{2(N-1)} \braket{\varphi^2}{v(x_1-x_2) \varphi^2} \\
        &+\frac{2j(N-2j)}{N-1} \Big( \braket{\chi_{2j} \otimes \varphi}{v(x_1-x_{2j+1})} \varphi \otimes \chi_{2j} + \braket{\chi_{2j} \otimes \varphi}{v(x_1 - x_{2j+1}) \chi_{2j} \otimes \varphi} \Big) \\
        &+\frac{2j(2j-1)}{2(N-1)} \braket{\chi_{2j}}{v(x_1-x_2) \chi_{2j}}.
    \end{aligned}
\end{equation*}

For the \( \mathbf{V}_{j, j+1} \) term, we have
\begin{equation*}
    \mathbf{V}_{j,j+1} 
    = \begin{aligned}[t]
        &\frac{(N-2j)(N-2j-1)}{2(N-1)} \sqrt{\binom{N}{2j}} \braket{\varphi^{\otimes N-2j} \otimes \chi_{2j}}{v(x_1-x_2) \varphi^{\otimes N-2j-2} \otimes_s \chi_{2j+2}} \\
        &+\frac{2j(N-2j)}{N-1} \sqrt{\binom{N}{2j}} \braket{\varphi^{\otimes N-2j} \otimes \chi_{2j}}{v(x_1-x_{N-2j+1}) \varphi^{\otimes N-2j-2} \otimes_s \chi_{2j+2}} \\
        &+\frac{2j(2j-1)}{2(N-1)} \sqrt{\binom{N}{2j}} \braket{\varphi^{\otimes N-2j} \otimes \chi_{2j}}{v(x_{N-2j+1}-x_{N-2j+2}) \varphi^{\otimes N-2j-2} \otimes_s \chi_{2j+2}}.
    \end{aligned}
\end{equation*}
The Bogoliubov orthogonality implies that the last two brakets vanish.
The first one is given by
\begin{equation*} 
    \braket{\varphi^{\otimes N-2j} \otimes \chi_{2j}}{v(x_1-x_2) \varphi^{\otimes N-2j-2} \otimes_s \chi_{2j+2}}
    = \frac{1}{\sqrt{\binom{N}{2j+2}}} \braket{\varphi^{\otimes 2} \otimes \chi_{2j}}{v(x_1-x_2) \chi_{2j+2}}.
\end{equation*}
Finally,
\begin{align*}
    \mathbf{V}_{j,j+1} 
    &= \frac{\sqrt{(N-2j)(N-2j-1)}}{2(N-1)} \sqrt{(2j+2)(2j+1)} \braket{\varphi^{\otimes 2} \otimes \chi_{2j}}{v(x_1-x_2) \chi_{2j+2}}
\end{align*}
and thus
\begin{align*}
    \mathbf{H}_{j, j} &= \mathbf{V}_{j,j} + (N-2j) \braket{\varphi}{-\Delta \varphi} + 2j \braket{\chi_{2j}}{-\Delta_{x_{N-2j+1}}\chi_{2j}}, \\
    \mathbf{H}_{j, j+1} &= \frac{\sqrt{(N-2j)(N-2j-1)}}{2(N-1)} \sqrt{(2j+2)(2j+1)} \braket{\varphi^{\otimes 2} \otimes \chi_{2j}}{v(x_1-x_2) \chi_{2j+2}}.
\end{align*}

\section{BUG integrator for complex symmetric matrices}
\label{appendix: BUG for complex symmetric matrices}

For the convenience of the reader we provide here some details regarding the BUG integrator used in this work, which differs from \cites{cerutiRobustSecondorderLowrank2024,cerutiTimeIntegrationSymmetric2020,cerutiRankadaptiveRobustIntegrator2022,cerutiUnconventionalRobustIntegrator2022} in the sense that we are subject to some implementation constraints.
We focus on the following differential equation
\begin{equation*}
    \frac{d}{dt} (UBU^\top)  = F(UBU^\top),
\end{equation*}
for some matrix-valued function \( F : \mathbb{C}^{M \times M} \to \mathbb{C}^{M \times M} \), with \( U \in \mathbb{C}^{M \times r} \) unitary and \( B \in \mathbb{C}^{r\times r} \) invertible.
In our case, the main difference with \cites{cerutiRobustSecondorderLowrank2024,cerutiTimeIntegrationSymmetric2020,cerutiRankadaptiveRobustIntegrator2022,cerutiUnconventionalRobustIntegrator2022} is that \( F \) can only be computed for symmetric arguments.

\subsection{Classical BUG integrator}

The BUG integrator from \cites{cerutiRobustSecondorderLowrank2024,cerutiTimeIntegrationSymmetric2020,cerutiRankadaptiveRobustIntegrator2022,cerutiUnconventionalRobustIntegrator2022} applies to differential equations of the form 
\begin{equation*}
    \frac{d}{dt} (USV^*) = F(USV^*),
\end{equation*}
with \( U, V \in \mathbb{C}^{M \times r} \) unitary and \( S \in \mathbb{C}^{r\times r} \) invertible, and relies on the following steps:
\begin{itemize}[leftmargin=*]
    \item \( K \)-step -- Write \( K = US \), and let \( V \) be constant with respect to time. Then the differential equation on \( K \) reads
        \begin{equation*}
            \dot{K}(t) = F(K(t) V(t_0)^*) V(t_0),\ t\in [t_0, t_1], \quad K(t_0) = U(t_0) S(t_0).
        \end{equation*}
        Perform an orthonormalization of the columns of \( K(t_1) \) to obtain the matrix \( U(t_1) \).
    \item \( L \)-step -- Write \( L = VS^* \), and let \( U \) be constant with respect to time. Then the differential equation on \( L \) reads
        \begin{equation*}
            \dot{L}(t) = F( U(t_0) L(t)^*)^* U(t_0),\ t\in [t_0, t_1], \quad L(t_0) = V(t_0) S(t_0)^*.
        \end{equation*}
        Perform an orthonormalization of the columns of \( L(t_1) \) to obtain the matrix \( V(t_1) \).
    \item \( S \)-step -- Write \( \widetilde{S} := U(t_1)^* U(t_0) S V(t_0)^* V(t_1) \), and integrate
        \begin{equation*}
            \dot{\widetilde{S}}(t) = U(t_1)^* F(U(t_1) \widetilde{S}(t) V(t_1)^*) V(t_1),\ t\in [t_0, t_1],\quad \widetilde{S}(t_0) = U(t_1)^* U(t_0) S(t_0) V(t_0)^* V(t_1).
        \end{equation*}
        Truncate the basis by getting rid of the smallest singular values of \( S \), based on some given smallness condition.
\end{itemize}

\subsection{Preserving symmetry constraints during the BUG integration}

In our situation, we have the following constraints:
\begin{enumerate}[label=(\roman*)]
    \item We wish to retain the complex symmetric form \( \widetilde{U} \widetilde{B} \widetilde{U}^\top \), with \( \widetilde{U} \) unitary and \( \widetilde{B} \) invertible, at every (sub-)step of the integration.

    \item the right-hand side \( F \) of the differential equations can only be evaluated with symmetric arguments.
\end{enumerate}

These two restrictions come from the fact that the differential equations \eqref{eqn: equations of motion} have been obtained under the assumption that \( \chi_2 \) is symmetric.

For the constraint (i), it actually helps us because it reduces the computational complexity of the usual BUG integrator, since we can write
\begin{equation*}
    \chi_2 = U B V^*
\end{equation*}
where \( V = \conj{U} \). 
This form must be satisfied at all times, which means that once \( U(t_1) \) is obtained, we just need to set \( V(t_1) = \conj{U(t_1)} \).
Therefore, one only needs to perform the \( K \)- and \( S \)-substeps of the BUG integrator, since the \( L \)-substep is now equivalent to the \( K \)-substep.

For the point (ii), the issue lies in the usual conventions for the differential equations appearing in the \( K \)-substep of the BUG integrator.
Recall the \( K \)-substep consists in integrating the following differential equation on \( [t_0, t_1] \):
\begin{equation}
    \label{eqn: appendix - usual Kstep BUG integrator}
    \dot{K}(t) = F(K(t) V_0^*) V_0, \quad K(t_0) = U_0 S_0.
\end{equation}
In our symmetric situation \( V_0 = \conj{U_0} \), at time \( t=t_0 \), the argument of \( F \) is symmetric since \( K(t_0)V_0^* = U_0 S_0 U_0^\top \).
For a high-order time integration of this differential equation, one typically needs to compute \( F(K(t)V_0^*) \) for different values of \( t\in [t_0,t_1] \).
For any such time \( t \), the argument \( K(t) V_0^* \) cannot be guaranteed to be symmetric but in our setting \( F \) can only be computed for symmetric arguments.
In \eqref{eqn: appendix - usual Kstep BUG integrator}, the argument of the function \( F \) is \( K(t) V_0^* \) because \( K(t) = U(t)S(t) \) is evolved by letting \( V(t) \) fixed.
This is a nice property in general because it allows to perform the \( K \)- and \( L \)-steps in parallel, but it is not usable in our setting.
Instead, we want to compute \( F(K(t)V(t)^*) = F(K(t) U(t)^\top) \) to guarantee symmetric arguments for the function \( F \).

Our goal is thus to compute \( K(t)U(t)^\top = U(t)S(t)U(t)^\top \) given only the matrix \( K(t) \).
We may omit the time-dependence of the matrices \( U,K \) and \( S \) from now on.
The matrix \( S \) is symmetric, so we can apply an Autonne--Takagi decomposition: this yields \( S = W \Sigma W^\top \) for \( \Sigma \) diagonal and \( W \) unitary, and thus \( K U^\top = U S U^\top = \tilde{U} \Sigma \tilde{U}^\top \), with \( \tilde{U} := UW \) unitary.
See Subsection \ref{appendix: autonne takagi factorization} for more detail on the Autonne--Takagi factorization.
The main idea now is to apply the BUG integrator not on the matrix \( \chi_2 = USU^\top \), but on \( \chi_2 = \tilde{U} \Sigma \tilde{U}^\top \).
These are two expressions for the same matrix \( \chi_2 \), but as we will see the latter is more convenient.
By applying the BUG integrator to update \( \chi_2 = \tilde{U} \Sigma \tilde{U}^\top \), the \( K \)-step now reads
\begin{equation*}
    \dot{K}(t) = F(K(t) \tilde{U}(t)^\top) V_0, \quad K(t_0) = \tilde{U}_0 \Sigma_0.
\end{equation*}
The matrix \( K(t) = \tilde{U}(t) \Sigma(t) \) is an SVD decomposition of \( K(t) \), where the matrix of right eigenvectors is the identity matrix \( I \).
This is not an SVD we necessarily obtain numerically, instead we can only get \( K = L D R^* \) with \( D \) diagonal and \( L,R \) unitary matrices.
However, the diagonal factor in the SVD is unique, and the left/right eigenvectors are unique up to a unitary factor when the matrix is invertible (see \cite{hornMatrixAnalysis2013}*{Theorem~2.6.5}): in other words, if \( L_1 D_1 R_1^* = L_2 D_2 R_2^* \) are two SVD decompositions of an invertible matrix, then \( D_1 = D_2 \) and there is a unitary \( X \) such that \( L_1 = L_2 X \) and \( R_1 = R_2 X \).
In our case, \( L_1 = \tilde{U} \), \( D_1 = \Sigma \), \( R_1 = I \), and \( L_2 = L \), \( D_2=D \), \( R_2 = R \). 
Therefore, \( D = S \) and there is a unitary \( X \) such that \( \tilde{U} = LX \), \( I = RX \).
We deduce \( X = R^* \) and thus \( \tilde{U} = LR^* \).

Finally, the matrix \( K U^\top \) can be computed from an SVD of \( K = LDR^* \) as 
\begin{equation*}
    K U^\top = (LR^*) D (LR^*)^\top.
\end{equation*}

In the usual rank-adaptive BUG integrator \cite{cerutiRankadaptiveRobustIntegrator2022}, the rank truncation at the end of each time step is usually performed using an SVD.
In our situation, we want to keep the symmetric factorization of the matrix \( \chi_2 = \tilde{U} \Sigma \tilde{U}^\top \) after the rank truncation, so we perform an Autonne--Takagi factorization instead of an SVD.

\subsection{Enforcing a diagonal core matrix}
\label{appendix: BUG enforcing diagonal core matrix}

It is known that the standard BUG integrator preserves symmetry.
A symmetric matrix can be diagonalized (whether by SVD or by an Autonne--Takagi decomposition), but the standard BUG integrator only uses this property at the end of every time integral in order to perform the rank truncation.
However, the intermediate substeps of the BUG integrator also preserve the symmetry, making it possible to consider only core matrices that are diagonal during the computation of the right-hand side of the equations of motion.

Let us make it clearer.
Assume we are integrating in time the low-rank approximation \( \chi_2 = U B U^\top \), for which the equations of motion are of the form \( \dot{\chi}_2 = F(\chi_2) \), with a given initial unitary basis \( U_0 \) and an initial diagonal core matrix \( B_0 \).
The function \( F \) is further assumed to be computed more efficiently when \( B \) is diagonal.
The standard BUG integrator would not use this property, but we can adapt it easily.
We consider the integration over the whole time interval via a Runge-Kutta method, which means that we only need to study explicit Euler substeps.

The \( S \)-step in the BUG integrator with complex symmetric matrices is of the form
\begin{equation*}
    \dot{S}(t) = U_0^* F(U_0 S(t) U_0^\top) \conj{U_0}
\end{equation*}
Using the explicit Euler substeps in the Runge-Kutta integration, we obtain
\begin{equation*}
    S(c_{k+1}\Delta t) = S(c_{k} \Delta t) + \Delta t U_0^* F(U_0 S(c_k \Delta t) U_0^\top) \conj{U_0}.
\end{equation*}
for some coefficients \( c_k, c_{k+1} \) given by the choice of the Runge-Kutta method.
By diagonalizing \( S(c_k \Delta t) = W_k \Sigma_k W_k^\top \) using an Autonne--Takagi decomposition, we get
\begin{equation*}
    S(c_{k+1}\Delta t) 
    = W_k \Sigma_k W_k^\top + \Delta t U_0^* F(U_0 W_k \Sigma_k W_k^\top U_0^\top) \conj{U_0}
    = W_k \Sigma_k W_k^\top + \Delta t U_0^* F(U_0 W_k \Sigma_k (U_0 W_k)^\top) \conj{U_0}.
\end{equation*}
The computation of \( F(U_0 W_k \Sigma_k (U_0 W_k)^\top) \) can be done efficiently, since now the core matrix \( \Sigma_k \) is diagonal, and the basis \( U_k := U_0 W_k \) is still unitary.

\subsection{Autonne--Takagi factorization}
\label{appendix: autonne takagi factorization}

The Autonne--Takagi factorization is a symmetric equivalent to the SVD decomposition of a complex symmetric matrix: for any complex symmetric matrix \( A \in \mathbb{C}^{n\times n} \), there are matrices \( W \in \mathbb{C}^{n\times n} \) and \( \Sigma \in \mathbb{C}^{n\times n} \), with \( W \) unitary and \( \Sigma \) the diagonal matrix of singular values of \( A \), such that
\begin{equation*}
    A = W \Sigma W^\top.
\end{equation*}
We refer the reader to \cite{hornMatrixAnalysis2013}*{Corollary 4.4.4} for a proof of this result, and to \cite{houdeMatrixDecompositionsQuantum2024}*{Section 4} for an algorithm to compute the Autonne--Takagi factorization from the usual SVD.

\section{Implementation details}
\label{appendix: implementation details}

\subsection{Computation of \texorpdfstring{\( \chi_{2k} \)}{chi2k}}

We address in this section the efficient computation of \( (\chi_{2k})_{i_1, \dots, i_{2k}} \), which are the values of \( \chi_{2k} \) after the space discretization at the multi-index \( (i_1, \dots, i_{2k}) \) on the discretized grid.
If we discretize space using \( M \) points as described in Section \ref{section: time and space discretizations}, then \( i_\ell \in \intinterval{1}{M} \) for \( \ell = 1, \dots, 2k \).
A naive approach would imply storing \( M^{2k} \) values, which is simply not doable in practice for large \( M \).

Recall that \( \chi_2 \) is approximated via a low-rank representation of rank \( r \), which implies by the tensorization \eqref{eqn: Bogoliubov tensorization}-\eqref{eqn: Bogoliubov tensorization normalized} that the discrete functions \( \chi_{2k} \) are also low-rank of rank \( r \) (see Section \ref{section: LRA of excitation functions}).
It means that we only need to store the core tensor of \( \chi_{2k} \), of size \( r^{2k} \).
It is a huge improvement over storing \( M^{2k} \) values, but it is still not easily doable in practice for reasonably large \( r \) and \( k \).
Therefore, we need an efficient way to compute the values \( (\chi_{2k})_{i_1, \dots, i_{2k}} \), which does not rely on storing them.

The tensorization \eqref{eqn: Bogoliubov tensorization}-\eqref{eqn: Bogoliubov tensorization normalized} was used in \eqref{eqn: core tensor chi2j using tensorization} to obtain the core tensor of \( \chi_{2k} \) based on the core matrix of \( \chi_2 \):
\begin{equation*}
    b_{i_1, \dots, i_{2k}} = \frac{\widetilde{b}_{i_1, \dots, i_{2k}}}{\norm{\widetilde{b}}{F}},\qquad
    \widetilde{b}_{i_1, \dots, i_{2k}} := \sum_{\tau\in P_{2k}} b_{i_{\tau(1)}, i_{\tau(2)}} \cdots b_{i_{\tau(2k-1)}, i_{\tau(2k)}}.
\end{equation*}
Equivalently, we can replace the sum over \( \tau \in P_{2k} \) by a sum over \( S_{2k} \):
\begin{equation*}
    b_{i_1, \dots, i_{2k}} = \frac{\widehat{b}_{i_1, \dots, i_{2k}}}{\norm{\widehat{b}}{F}},\qquad
    \widehat{b}_{i_1, \dots, i_{2k}} = \sum_{\tau\in S_{2k}} b_{i_{\tau(1)}, i_{\tau(2)}} \cdots b_{i_{\tau(2k-1)}, i_{\tau(2k)}} = 2^k k! \widetilde{b}_{i_1, \dots, i_{2k}}.
\end{equation*}
By using that the core matrix \( b \) of \( \chi_2 \) is diagonal, only the permutations \( \tau \in S_{2k} \) such that \( i_{\tau(1)}=i_{\tau(2)}, \dots, i_{\tau(2k-1)}=i_{\tau(2k)} \) yield a nonzero summand.
In particular, this condition can only be achieved if the multi-index \( (i_1, \dots, i_{2k}) \) is such that each value appears an even number of times in the multi-index.
Hence, let us assume from now on that each value in the multi-index appears an even number of times.
We define \emph{subgroups} \( \mathsf{S}^\ell_{(i_1, \dots, i_{2k})} := \{j : i_j = \ell, j=1, \dots, 2k\} \).
In the expression of \( \widehat{b}_{i_1, \dots, i_{2k}} \) above, the permutations \( \tau \) that yield nonzero summands are those that pair together indices of the same subgroup.
There are \( \abs{\mathsf{S}^\ell_{(i_1, \dots, i_{2k})}}! \) such permutations corresponding to the subgroup associated to the value \( \ell \), and therefore \( \prod_{\ell=1}^r \abs{\mathsf{S}^\ell_{(i_1, \dots, i_{2k})}}! \) such permutations in total.
Each one of them leaves the order of the terms in the product \( b_{i_{\tau(1)}, i_{\tau(2)}} \dots b_{i_{\tau(2k-1)}, i_{\tau(2k)}} \) unchanged, but among all permutations \( \tau \in S_{2k} \) there are also permutations that change this order.
There are \( k! \) of them.
Among those \( k! \) permutations that rearrange the order of terms in the products, they may also swap two terms that correspond to the same subgroup.
We have already taken those permutations into account with \( \prod_{\ell=1}^r \abs{\mathsf{S}^\ell_{(i_1, \dots, i_{2k})}}! \), which means that the actual number of permutations changing the order of the terms in the product needs to be divided by the number of pairs of indices each subgroup contributes to.
We thus have \( k! / \prod_{\ell=1}^r \frac{\abs{\mathsf{S}^\ell_{(i_1, \dots, i_{2k})}}}{2}! \) permutations in total that change the order of terms in the summand but preserve the order of terms corresponding to a same subgroup.
We can assume without loss of generality that the multi-index \( (i_1, \dots, i_{2k}) \) is sorted, and we end up with
\begin{equation*}
    \widehat{b}_{i_1, \dots, i_{2k}} = \sum_{\tau\in S_{2k}} b_{i_{\tau(1)}, i_{\tau(2)}} \dots b_{i_{\tau(2k-1)}, i_{\tau(2k)}}
     = b_{i_1, i_2} \dots b_{i_{2k-1}, i_{2k}} k!\frac{\prod_{\ell=1}^r \abs{\mathsf{S}^\ell_{(i_1, \dots, i_{2k})}}!}{\prod_{\ell=1}^r \frac{\abs{\mathsf{S}^\ell_{(i_1, \dots, i_{2k})}}}{2}!}. 
\end{equation*}
Finally, owing to \( \widehat{b}_{i_1, \dots, i_{2k}} = 2^k k! \widetilde{b}_{i_1, \dots, i_{2k}} \), we obtain
\begin{equation*}
    \widetilde{b}_{i_1, \dots, i_{2k}} = \sum_{\tau\in S_{2k}} b_{i_{\tau(1)}, i_{\tau(2)}} \dots b_{i_{\tau(2k-1)}, i_{\tau(2k)}}
    = b_{i_1, i_2} \dots b_{i_{2k-1}, i_{2k}} \frac{\prod_{\ell=1}^r \abs{\mathsf{S}^\ell_{(i_1, \dots, i_{2k})}}!}{2^k \prod_{\ell=1}^r \frac{\abs{\mathsf{S}^\ell_{(i_1, \dots, i_{2k})}}}{2}!}. 
\end{equation*}
When the multi-index \( (i_1, \dots, i_{2k}) \) is sorted, the computation of the right-hand side can be performed in a single iteration over the multi-index \( (i_1, \dots, i_{2k}) \), i.e. in time \( \mathcal{O}(k) \), and can thus be performed on-the-fly.
The norm \( \norm{\widetilde{b}_{i_1, \dots, i_{2k}}}{F} \) can be computed once and stored anytime \( b \) is updated.

\subsection{Computation of \texorpdfstring{\( \mathsf{P}^{(p,q)}_{i'_{\mathcal{P}^C}, i_{\mathcal{Q}^C}} \)}{P (p,q) (i'calPc, icalQc)}}

In this section we give some details about the efficient computation of the coefficient \( \mathsf{P}^{(p,q)}_{i'_{\mathcal{P}^C}, i_{\mathcal{Q}^C}} \) given in \eqref{eqn: coeff Ppq calPc calQc}:
\begin{equation}
    \label{eqn: sum sfP in appendix}
    \mathsf{P}^{(p,q)}_{i'_{\mathcal{P}^C}, i_{\mathcal{Q}^C}} := \sum_{i'_{\mathcal{P}}=1}^r \conj{b_{i'_\mathcal{P}, i'_{\mathcal{P}^C}}} b_{i'_\mathcal{P}, i_{\mathcal{Q}^C}},
\end{equation}
Recall that the core tensors \( b \) that appear here are obtained using the tensorization \eqref{eqn: Bogoliubov tensorization}-\eqref{eqn: Bogoliubov tensorization normalized}, which gives \eqref{eqn: core tensor chi2j using tensorization}.
The summand is nonzero if and only if both terms of the product are simultaneously nonzero.
We study the core tensor \( b_{i'_\mathcal{P}, i'_{\mathcal{P}^C}} \) of size \( 2p \), and write \( i' = (i'_\mathcal{P}, i'_{\mathcal{P}^C}) \): it is nonzero if and only if there is at least one permutation \( \tau \in P_{2p} \) such that one of the summands is nonzero in the sum \eqref{eqn: core tensor chi2j using tensorization} that defines this core tensor.
For the multi-index \( i' \), the diagonality of \( B \) means that \( b_{i'} \) is nonzero if and only if there is a permutation \( \tau \in P_{2p} \) such that \( i'_{\tau(1)}=i'_{\tau(2)} \), \dots, \( i'_{\tau(2p-1)}=i'_{\tau(2p)} \).
This can only occur when each index in \( i' \) is repeated an even number of times.
For example with \( p=2 \) and \( i'=(1, 1, 2, 3) \) we would get \( b_{i'} = 0 \) since the values \( 2 \) and \( 3 \) only appear an odd number of times. But \( i'=(1, 1, 2, 2) \) would lead to \( b_{i'} \neq 0 \) since all the values in the multi-index \( i' \) are repeated an even number of times.
We introduce \( G^{\ell}_{i'} \) the number of indices in \( i' \) that are equal to \( \ell \).
The argument above can be rewritten as
\begin{equation}
    \label{eqn: simplification odd size subgroups coeff P}
    \exists \ell \in \intinterval{1}{r} \text{ such that } G^{\ell}_{i'} \text{ is odd} \implies b_{i'} = 0
\end{equation}
The same reasoning holds for \( b_{i'_\mathcal{P}, i_{\mathcal{Q}^C}} \), with the multi-index \( i = (i'_\mathcal{P}, i_{\mathcal{Q}^C}) \) of size \( 2q \).
The criterion \eqref{eqn: simplification odd size subgroups coeff P} is particularly helpful when computing the sum defining \( \mathsf{P}^{(p,q)}_{i'_{\mathcal{P}^C}, i_{\mathcal{Q}^C}} \). 
Define the sets 
\begin{equation*}
    \mathsf{O}_{i'_{\mathcal{P}^C}} := \left\{ \ell \in \intinterval{1}{r} : G^\ell_{i'_{\mathcal{P}^C}} \ \text{ is odd} \right\}
    \quad \text{and}\quad
    \mathsf{O}_{i_{\mathcal{Q}^C}} := \left\{ \ell \in \intinterval{1}{r} : G^\ell_{i_{\mathcal{Q}^C}} \ \text{ is odd} \right\}.
\end{equation*}
If they are different, we are in the situation where
\begin{equation}
    \label{appendix: condition coeff Ppq zero}
    G^{\ell}_{(i'_\mathcal{P}, i'_{\mathcal{P}^C})}\ \text{ or }\ G^{\ell}_{(i'_\mathcal{P}, i'_{\mathcal{Q}^C})}\ \text{ is odd}
\end{equation}
for some \( \ell \in \intinterval{1}{r} \), and thus \( \mathsf{P}^{(p,q)}_{i'_{\mathcal{P}^C}, i_{\mathcal{Q}^C}} = 0 \) by \eqref{eqn: simplification odd size subgroups coeff P}.
If the two sets are the same, we don't need to iterate over \( i'_\mathcal{P} \in \intinterval{1}{r}^{|\mathcal{P}|} \) for the computation of \( \mathsf{P}^{(p,q)}_{i'_{\mathcal{P}^C}, i_{\mathcal{Q}^C}} \): indeed, we can restrict the sum in \eqref{eqn: sum sfP in appendix}  to the multi-indices \( i'_{\mathcal{P}} \) such that \( G^\ell_{i'_{\mathcal{P}^C}, i'_{\mathcal{P}}} \) and \( G^\ell_{i'_{\mathcal{Q}^C}, i'_{\mathcal{P}}} \) are both even for all \( \ell \).
This enforces \( i'_{\mathcal{P}} \) to be of the form \( i'_{\mathcal{P}} = (\mathsf{O}_{i'_{\mathcal{P}^C}}, \mathsf{R}_s, \mathsf{R}_s) \), where \( \mathsf{R}_s \in \intinterval{1}{r}^s \) and \( s := \frac{\abs{\mathcal{P}}-\abs{\mathsf{O}_{i'_{\mathcal{P}^C}}}}{2} \).
The iteration over \( i'_{\mathcal{P}} \) in \eqref{eqn: sum sfP in appendix} can now be replaced by an iteration over \( \mathsf{R}_s = (k_1, \dots, k_s) \in \intinterval{1}{r}^s \), and the multi-index \( i'_\mathcal{P} \) is given by:
\begin{equation*}
    i'_\mathcal{P} = (\mathsf{O}_{i'_{\mathcal{P}^C}}, k_1, k_1, \dots, k_s, k_s).
\end{equation*}
Of course, this makes sense only if \( s \) is an even, non-negative integer, i.e. if \( \abs{\mathsf{O}_{i'_{\mathcal{P}^C}}} \leq |\mathcal{P}| \) and if \( \abs{\mathcal{P}}-\abs{\mathsf{O}_{i'_{\mathcal{P}^C}}} \) is even.
If this is not the case, then the condition \eqref{appendix: condition coeff Ppq zero} is again satisfied and \( \mathsf{P}^{(p,q)}_{i'_{\mathcal{P}^C}, i_{\mathcal{Q}^C}} = 0 \).
Ultimately, we can replace the summation over \( i'_{\mathcal{P}} \in \intinterval{1}{r}^{\abs{\mathcal{P}}} \) in \eqref{eqn: sum sfP in appendix} by a summation over \( \mathsf{R}_s \in \intinterval{1}{r}^s \) with \( s = \frac{\abs{\mathcal{P}}-\abs{\mathsf{O}_{i'_{\mathcal{P}^C}}}}{2} \).
This is a substantial improvement for either large \( \abs{\mathcal{P}} \) or large \( r \).

\bibliography{biblio}

\end{document}